
\documentclass[twoside,a4paper]{article}
\usepackage[T1]{fontenc}

\usepackage{amssymb}
\usepackage{graphicx}
\usepackage[cmtip,all]{xy}
\usepackage[dvipsnames]{xcolor}
\usepackage{amsmath,bm}
\usepackage{amsmath,amssymb}
\usepackage[makeroom]{cancel}
\usepackage{mathtools}
\usepackage[colorlinks,allcolors=purple]{hyperref}
\usepackage[noabbrev,capitalize,nameinlink]{cleveref}
\usepackage[frak=esstix]{mathalpha}
\usepackage[margin=20mm]{geometry}
\usepackage{amsmath}
\usepackage{mathrsfs}
\usepackage{stackengine}
\usepackage{amsthm}
\usepackage{stackengine,scalerel}
\usepackage{trfsigns}   

\usepackage{mathrsfs}
\newsavebox\foobox
\newlength{\foodim}
\newcommand{\slantbox}[2][0]{\mbox{%
        \sbox{\foobox}{#2}%
        \foodim=#1\wd\foobox
        \hskip \wd\foobox
        \hskip -0.5\foodim
        \pdfsave
        \pdfsetmatrix{1 0 #1 1}%
        \llap{\usebox{\foobox}}%
        \pdfrestore
        \hskip 0.5\foodim
}}

\def\Fourier{\slantbox[-.45]{$\mathscr{F}$}}


\usepackage{fancyhdr}
\pagestyle{fancy}
\fancyhf{}

\fancyfoot[C]{\thepage}
\fancyhead[CO]{Arpan Mukherjee and Mourad Sini}
\fancyhead[CE]{Acoustic Cavitation using Resonating Micro-Bubbles}

\newtheorem{theorem}{Theorem}[section]
\newtheorem{lemma}{Lemma}[section] 

\newtheorem{proposition}{Proposition}[section] 

\newtheorem{remark}{Remark}[section]  

\numberwithin{equation}{section}

\DeclareRobustCommand{\rchi}{{\mathpalette\irchi\relax}}
\newcommand{\irchi}[2]{\raisebox{\depth}{$#1\chi$}}

\newcommand\obullet[1]{\ThisStyle{\ensurestackMath{%
  \stackon[1pt]{\SavedStyle#1}{\SavedStyle\kern.6\LMpt\bullet}}}}
\newcommand\ocirc[1]{\ThisStyle{\ensurestackMath{%
  \stackon[1pt]{\SavedStyle#1}{\SavedStyle\kern.6\LMpt\circ}}}}

\title{Acoustic Cavitation using Resonating Micro-Bubbles.\\ Analysis in the Time-Domain}

\author{Arpan Mukherjee\footnote{Radon Institute (RICAM), Austrian Academy of
Sciences, Altenbergerstrasse 69, A-4040, Linz, Austria (arpan.mukherjee@oeaw.ac.at). This author is supported by the Austrian Science Fund (FWF): P32660.} \ and Mourad Sini\footnote{Radon Institute (RICAM), Austrian Academy of
Sciences, Altenbergerstrasse 69, A-4040, Linz, Austria (mourad.sini@oeaw.ac.at). This author is partially supported by the Austrian Science Fund (FWF): P32660.}}

\begin{document}
\maketitle
\begin{abstract}

  We study the time-domain acoustic wave propagation in the presence of a micro-bubble. This micro-bubble is characterized by a mass density and bulk modulus which are both very small as compared to the ones of the \textcolor{black}{uniform and homogeneous background medium}. The goal is to estimate the amount of pressure that is created very near (at a distance proportional to the radius of the bubble) to the bubble. We show that, at that small distance, the dominating field is reminiscent to the wave created by a point-like obstacle modeled formally by a Dirac-like heterogeneity with support at the location of the bubble and the contrast between the bubble and background material as the scattering coefficient. As a conclusion, we can tune the bubbles material properties so that the pressure near it reaches a desired amount. Such design might be useful in the purpose of acoustic cavitation where one needs enough, but not too much, pressure to eliminate unwanted anomalies. The mathematical analysis is done using time-domain integral equations and asymptotic analysis techniques. A well known feature here is that the contrasting scales between the bubble and the background generate resonances (mainly the Minnaert one) in the time-harmonic regime. Such critical scales, and the generated resonances, are also reflected in the time-domain estimation of the acoustic wave. In particular, reaching the desired amount of pressure near the location of the bubble is possible only with such resonating bubbles.
\vskip 0.1in
\noindent
  \textbf{Key Words.} Time-Domain Acoustic Scattering,  Contrasting Media, Bubbles, Asymptotic Analysis, Retarded Layer and Volume Potentials, Lippmann–Schwinger equation.
\end{abstract}

\section{Motivation and Results}

When micro-bubbles are subject to an ultrasonic field, in liquids, they start to grow and at high frequencies they can even collapse. Such a phenomenon is known as the acoustic cavitation. During the collapsing phase, the pressure (and eventually the temperature) will rise in the surrounding liquid. This phenomenon of growth and collapse from low to high amplitude of incident acoustic field is a non-linear behaviour which is modelled by the Rayleigh-Plesset mathematical model, or more generally the Keller-Miksis model which describes the evolution, or dynamics, of the radius of the spherical bubble, see \cite{Brennen, Yasui}. The mathematical model describing the dynamics of non-spherical bubbles is described in \cite{C-M-P-T-1, C-M-P-T-2}. In the current work, we are interested in  quantifying the amount of pressure that can be created around the micro-bubbles, of general shapes, when they are excited by incident acoustic with low to moderate amplitude. In such regimes, the change of size of the bubbles can be neglected and the propagation of the generated pressure can be modelled by the following linearized acoustic problem, see \cite{C-M-P-T-1, C-M-P-T-2},
\begin{align}\label{mainfor}
\begin{cases}   \mathrm{k}^{-1}(\mathrm{x})u_{\mathrm{t}\mathrm{t}}- \text{div} \rho^{-1}(\mathrm{x)}\nabla u = 0 \quad \quad \text{in} \quad (\mathbb{R}^{3}\setminus \partial\Omega) \times (0,\mathrm{T})\\
 u\big|_{+} = u\big|_{-}  \quad \quad \quad \quad \quad \quad \quad \quad \quad \quad \ \text{on} \quad \partial\Omega\\
 \rho_{\textcolor{black}{\mathrm{m}}}^{-1}\partial_\nu u\big|_{+} = \rho_{\textcolor{black}{\mathrm{c}}}^{-1} \partial_\nu u\big|_{-} \quad \quad \quad \quad \quad \text{on} \quad \partial \Omega \\
 u(\mathrm{x},0)= u_\mathrm{t}(\mathrm{x},0) = 0 \quad \quad \quad  \quad \quad \ \text{for} \ \mathrm{x} \in \mathbb{R}^3,
\end{cases}
\end{align}
where $\rho = \rho_{\mathrm{c}}\rchi_{\Omega} + \rho_{m}\rchi_{\mathbb{R}^{3}\setminus\overline{\Omega}}$ is the mass density and $\mathrm{k} = \mathrm{k}_{\mathrm{c}}\rchi_{\Omega} + \mathrm{k}_{\mathrm{m}}\rchi_{\mathbb{R}^{3}\setminus\overline{\Omega}}$ is the bulk modulus of the bubble and acoustic medium respectively. Moreover, $\partial_\nu$ denotes the outward normal vector and we use the notation $\partial_\nu \big|_{\pm}$ indicating $
    \partial_\nu u \big|_{\pm}(\mathrm{x},\mathrm{t}) = \lim_{\mathrm{h}\to 0}\nabla u(\mathrm{x}\pm \mathrm{h}\nu_\mathrm{x},\mathrm{t})\cdot \nu_\mathrm{x},$
where $\nu$ being the outward normal vector to $\partial\Omega$. 
\newline

\noindent
Our goal is to estimate the amount of pressure, $u$, that is created very near (at a distance proportional to the radius of the bubble) to the bubble. The motivation of this study comes from the proposed therapy modality using bubbles, see for instance \cite{roy, roy-2}. Another motivation comes from the drugs delivery using bubbles as vehicles. The main principle in such modality is that after injecting the bubbles near to the target to be cured, one applies ultrasound waves, as incident pressures, so that the bubbles will be compressed, and eventually collapses, which can result in vascular occlusion (or tissue destruction), see \cite{roy, roy-2}. Such modalities which aimed to be non-invasive therapies use very high ultrasound intensity which might have undesirable effects for the surroundings. However, reducing the intensity of the ultrasound should be compensated so that the desired high pressure could be reached locally near the bubbles. To achieve this goal, we analyse qualitatively and quantitatively the dependence of the generated pressure, near the injected bubble, in terms of the acoustic properties as well as the geometry of the bubbles coupled with acoustic properties of the background where the bubbles is injected. Such analysis might help tuning these parameters (i.e. geometry/acoustic properties of the bubble) to reach a desired pressure locally near the bubble.  
\newline

\noindent
To describe correctly the scales needed in the mathematical analysis, we assume the bubble to be of the form $\Omega = \delta \mathrm{B} + \mathrm{z}$, with a bounded \textcolor{black}{$\mathcal{C}^2$-regular} domain $\mathrm{B}$ in $\mathbb{R}^{3}$ which is centred at origin, where $\delta$ denotes its maximum \textcolor{black}{radius} and $\mathrm{z}$ represent\textcolor{black}{s} the position. The coefficients $\rho$ and $\mathrm{k}$ are also assumed to be piece-wise constants, with one constant outside $\Omega$, i.e. 
\begin{align}\label{con}
    \rho(\mathrm{x})\equiv \rho_\mathrm{m}, \quad \quad \mathrm{k}(\mathrm{x}) \equiv \mathrm{k}_\mathrm{m} \quad \quad \text{outside of the bounded domain}\ \Omega.
\end{align}
and other constants $\rho_\mathrm{c}$ and $\mathrm{k}_\mathrm{c}$ in $\Omega$ satisfying the following scaling properties
\begin{align}\label{cond-bubble}
    \rho_\mathrm{c} = \overline{\rho_\mathrm{c}}\delta^2, \quad \mathrm{k}_\mathrm{c} = \overline{\mathrm{k}_\mathrm{c}}\delta^2 \quad \text{and} \quad \frac{\rho_\mathrm{c}}{\mathrm{k}_\mathrm{c}} \sim 1 \ \text{as}\ \delta\ll1.
\end{align}
These scales mean that one should design the bubble so that its size and the type of materials (as its mass density and bulk modulus) should be linked via (\ref{cond-bubble}) and compared to the background via (\ref{con}).
\newline

\noindent
We consider acoustic incident waves of the form of causal point-sources
\begin{equation}
    u^i(\mathrm{x},\mathrm{t}, \mathrm{x}_0):=\frac{\lambda(\mathrm{t}-\mathrm{c}_0^{-1}\vert \mathrm{x}-\mathrm{x}_0\vert)}{\vert \mathrm{x}-\mathrm{x}_0\vert},
\end{equation}
where $\mathrm{x}_0$ is a source point located away from the micro-bubble $\Omega$ and $\lambda$ is a smooth function having support in the half line $(0, +\infty)$. We also denote by $\mathrm{c}_0= \sqrt{\frac{\mathrm{k}_\mathrm{m}}{\rho_\mathrm{m}}}$ the constant wave speed in $\mathbb{R}^3\setminus\overline{\Omega}$. Other types of incident acoustic waves could be used as well.
\bigbreak
\noindent
We now state the main result of this work. 
\begin{theorem}\label{1.1}
Assume that $\mathrm{B}$ is a bounded and $\textcolor{black}{\mathcal{C}^2}$-regular domain in $\mathbb{R}^3$ and the two coefficients $\rho$ and $\mathrm{k}$ satisfy the conditions (\ref{con}) and (\ref{cond-bubble}). \textcolor{black}{In addition, we assume that $\lambda:\mathbb R \to \mathbb R$ is a causal function and of class $\mathcal{C}^9(\mathbb R).$\footnote{\textcolor{black}{This required order of regularity is explained in Remark \ref{r2.1}.}}} Let $u:=u^\mathrm{i}+u^\mathrm{s}$ be the solution of the hyperbolic problem (\ref{mainfor}).
Let us consider $\mathrm{x}\in \mathbb{R}^3\setminus\overline{\Omega}$ such that $\textbf{dist}(\mathrm{x},\Omega)\sim \delta^\mathrm{q}$ and therefore, $|\mathrm{x}-\mathrm{z}| \sim \delta +\delta^\mathrm{q}$ where $\mathrm{q}\in [0,1]$. Then we have the following approximation of $u^s$ 
\begin{align}\label{main-estimate}
    u^\mathrm{s}(\mathrm{x},\mathrm{t})
    = \frac{\omega_\mathrm{M}\rho_\mathrm{m}|\mathrm{B}|}{4\pi\overline{\mathrm{k}_\mathrm{c}}} \delta \frac{1}{|\partial\Omega|}\int_{\partial\Omega}\frac{1}{|\mathrm{x}-\mathrm{y}|}d\sigma_\mathrm{y}\int_0^{\mathrm{t}-\mathrm{c}_0^{-1}|\mathrm{x}-\mathrm{z}|} \sin\big(\omega_\mathrm{M}(\mathrm{t}-\mathrm{c}_0^{-1}|\mathrm{x}-\mathrm{z}|-\tau)\big)u_{\mathrm{t}\mathrm{t}}^\mathrm{i}(\mathrm{z},\mathrm{\tau}) d\tau + \mathcal{O}(\delta^{2-\mathrm{q}}),
\end{align}
where $\omega_\mathrm{M} := \sqrt{
\frac{2\overline{\mathrm{k}_\mathrm{c}}}{\mathrm{A}_{\partial\mathrm{B}}\rho_\mathrm{m}}}$ and $\displaystyle
    \mathrm{A}_{\partial\mathrm{B}} := \frac{1}{|\partial\mathrm{B}|}\int_{\partial\mathrm{B}}\int_{\partial\mathrm{B}}\frac{(\mathrm{x}-\mathrm{y})\cdot\nu}{|\mathrm{x}-\mathrm{y}|}d\sigma_\mathrm{x}d\sigma_\mathrm{y}.
$
\end{theorem}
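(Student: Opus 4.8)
The plan is to recast the transmission problem \eqref{mainfor} as a time-domain integral equation via retarded potentials, rescale everything to the fixed reference domain $\mathrm{B}$ so as to expose the $\delta$-dependence, and then extract the leading resonant dynamics by asymptotic analysis. First I would represent the scattered field $u^\mathrm{s}$ through retarded single- and double-layer potentials supported on $\partial\Omega$, built from the fundamental solution of the background wave operator $\mathrm{k}_\mathrm{m}^{-1}\partial_\mathrm{t}^2-\rho_\mathrm{m}^{-1}\Delta$. Imposing the two transmission conditions on $\partial\Omega$ converts the PDE into a coupled system of boundary integral equations for the unknown densities; equivalently, writing the contrast in $\mathrm{k}^{-1}$ and $\rho^{-1}$ as sources supported in $\Omega$ yields the time-domain Lippmann–Schwinger volume equation, which is the formulation I would carry through since it displays the contrast coefficients directly.

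Second, I would substitute $\mathrm{y}=\mathrm{z}+\delta\eta$ with $\eta\in\mathrm{B}$ so that all integrals run over the fixed domain $\mathrm{B}$ (or $\partial\mathrm{B}$), and track the powers of $\delta$ produced both by the volume/surface elements and by the scaling \eqref{cond-bubble}, under which $\mathrm{k}_\mathrm{c}^{-1}\sim\delta^{-2}$ and $\rho_\mathrm{c}^{-1}\sim\delta^{-2}$ are large while the interior speed $\sqrt{\overline{\mathrm{k}_\mathrm{c}}/\overline{\rho_\mathrm{c}}}\sim1$. The crucial observation is that in this regime the leading-order interior field is (rescaled-)harmonic and spatially almost constant across the bubble, so its dynamics collapse to a single scalar mode $\mathrm{p}(\mathrm{t})$, the averaged interior pressure. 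Inverting the leading boundary integral operator on $\partial\mathrm{B}$ introduces the geometric constant $\mathrm{A}_{\partial\mathrm{B}}$ — which, through $\partial_\nu|\mathrm{x}-\mathrm{y}|=(\mathrm{x}-\mathrm{y})\cdot\nu/|\mathrm{x}-\mathrm{y}|$ and the divergence theorem, encodes the single-layer/Newtonian self-interaction of $\mathrm{B}$ — and balancing the compressional term against this restoring term gives, at leading order, the harmonic-oscillator identity
\begin{equation}
\ddot{\mathrm{p}}(\mathrm{t})+\omega_\mathrm{M}^2\,\mathrm{p}(\mathrm{t})=\mathrm{F}(\mathrm{t}),\qquad \omega_\mathrm{M}^2=\frac{2\overline{\mathrm{k}_\mathrm{c}}}{\mathrm{A}_{\partial\mathrm{B}}\,\rho_\mathrm{m}},
\end{equation}
whose natural frequency is exactly the Minnaert resonance and whose forcing $\mathrm{F}$ is proportional to $u^\mathrm{i}_{\mathrm{tt}}(\mathrm{z},\cdot)$, the second time derivative entering because the drive is the compressional term $\mathrm{k}_\mathrm{c}^{-1}u_{\mathrm{tt}}$ evaluated, to leading order, at the incident field sampled at the centre $\mathrm{z}$.

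Third, solving this oscillator with causal data via its Green's function $\omega_\mathrm{M}^{-1}\sin(\omega_\mathrm{M}\mathrm{t})$ produces the convolution $\int_0^{\cdot}\sin\!\big(\omega_\mathrm{M}(\cdot-\tau)\big)u^\mathrm{i}_{\mathrm{tt}}(\mathrm{z},\tau)\,d\tau$ seen in \eqref{main-estimate}. Finally I would propagate $\mathrm{p}(\mathrm{t})$ out to the near-field point $\mathrm{x}$ with $\textbf{dist}(\mathrm{x},\Omega)\sim\delta^\mathrm{q}$ through the retarded single-layer potential, which generates the factor $\tfrac{1}{|\partial\Omega|}\int_{\partial\Omega}|\mathrm{x}-\mathrm{y}|^{-1}\,d\sigma_\mathrm{y}$ and the retardation time $\mathrm{t}-\mathrm{c}_0^{-1}|\mathrm{x}-\mathrm{z}|$, and then count the surviving powers of $\delta$ to recover the prefactor $\tfrac{\omega_\mathrm{M}\rho_\mathrm{m}|\mathrm{B}|}{4\pi\overline{\mathrm{k}_\mathrm{c}}}\,\delta$ together with the remainder $\mathcal{O}(\delta^{2-\mathrm{q}})$.

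The main obstacle I anticipate is the asymptotic analysis in the resonant regime: because the contrast coefficients blow up like $\delta^{-2}$, a naive Born/perturbation expansion diverges, and one must instead justify rigorously that the interior field collapses to the single scalar mode $\mathrm{p}(\mathrm{t})$ and that the discarded contributions — higher spatial modes on $\partial\mathrm{B}$ and the spatial variation of $u^\mathrm{i}$ across $\Omega$ — are genuinely of lower order. This demands uniform-in-time mapping properties of the retarded layer and volume potentials and careful control of the time convolutions, which is precisely where the $\mathcal{C}^9$ regularity of $\lambda$ is consumed, so that the error is $\mathcal{O}(\delta^{2-\mathrm{q}})$ uniformly across the admissible range $\mathrm{q}\in[0,1]$, including the most singular near-field scale $\mathrm{q}=1$.
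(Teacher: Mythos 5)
Your proposal follows essentially the same route as the paper: the time-domain Lippmann--Schwinger formulation, collapse of the bubble's response to a single scalar degree of freedom governed by a forced harmonic oscillator at the Minnaert frequency $\omega_\mathrm{M}$, solution by the causal kernel $\omega_\mathrm{M}^{-1}\sin(\omega_\mathrm{M}\,\cdot)$, and propagation to the near field through the averaged single-layer factor $\frac{1}{|\partial\Omega|}\int_{\partial\Omega}|\mathrm{x}-\mathrm{y}|^{-1}d\sigma_\mathrm{y}$, with the a priori estimates (where the $\mathcal{C}^9$ regularity of $\lambda$ is spent) correctly identified as the hard part. The only cosmetic differences are that the paper's scalar variable is the boundary flux $\mathrm{Y}(\mathrm{t})=\int_{\partial\Omega}\partial_\nu u\,d\sigma$ rather than the averaged interior pressure (equivalent via the divergence theorem and the interior equation), and that the inertia constant $\mathrm{A}_{\partial\mathrm{B}}$ arises from the second-order time-Taylor expansion of the retarded adjoint double-layer kernel rather than from inverting a boundary operator, which is used only for error control.
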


\noindent
The dominant part in (\ref{main-estimate}) is reminiscent the wave field generated by a point-like obstacle, in the time-domain acoustic wave propagation, supported on the location $\mathrm{z}$ of the bubble with a scattering coefficient given by $\frac{\omega_\mathrm{M}\rho_\mathrm{m}|\mathrm{B}|}{\overline{\mathrm{k}_\mathrm{c}}} \delta$. This is formally the solution for the acoustic wave problem with a singular potential (or speed of propagation), of Dirac type, supported on the point $\mathrm{z}$. The constant $\omega_\mathrm{M}$ is the Minnaert frequency that is known in the wave propagation in the presence of bubble in the time-harmonic regime, see \cite{Habib-Minnaert, Ahcene-bubbles}. Actually, if we take the formal Fourier transform of the dominant term in  (\ref{main-estimate}), then we end-up with the dominant term of the acoustic wave in the harmonic regime (see \cite{Habib-Minnaert, Ahcene-bubbles}).
\newline

\noindent
Assume for simplicity, here, that $\Omega$ is a sphere (of center $z$ and radius $\delta$). Then we can show that the function $x \rightarrow\displaystyle \mathrm{Q}(\mathrm{x}):=\frac{1}{|\partial\Omega|}\int_{\partial\Omega}\frac{1}{|\mathrm{x}-\mathrm{y}|}d\sigma_\mathrm{y}$ has the following behavior $\mathrm{Q}(\mathrm{x})=\vert \mathrm{x}-\mathrm{z}\vert^{-1}$ for $\mathrm{x}\in \mathbb{R}^3\setminus{\Omega}$ and $\mathrm{Q}(\mathrm{x})=\delta^{-1}$ for $\mathrm{x} \in \overline{\Omega}$. Therefore, the dominating term in (\ref{main-estimate}) behaves as $\mathrm{R}(\mathrm{x})\; \delta^{1-\mathrm{q}}$ for $\mathrm{x} \in \mathbb{R}^3\setminus{\overline{\Omega}}$ such that $\textbf{dist}(\mathrm{x},\Omega)= \delta^\mathrm{q}$, $\mathrm{q} \in [0, 1]$, where $\mathrm{R}(\mathrm{x})$ is controllable knowing the geometry and acoustic properties of the bubble. As we can tune the bubble's properties, through its size and eventually the bulk-related constant $\overline{\mathrm{k}}_\mathrm{c}$, we can reach any desired value of the pressure on and near the bubble while it decreases away from it.
\newline

\noindent
The estimation of the field until a distance $\delta$ to the bubble is derived and justified while the estimate inside the bubble is not yet justified. However, we do believe that the expansion in  (\ref{main-estimate}) is also valid inside the bubble $\Omega$ as well. Indeed, in the time-harmonic regime, such estimates everywhere in the space are already justified using a different approach, based on resolvent estimates of singularly perturbed Laplacian, see \cite{Man-Pos-Sin}. Extending such everywhere-estimates to the time-domain setting is highly desirable and this topic will be considered in the future. \textcolor{black}{Finally, let us mention the two contributions, \cite{liu2} and \cite{liu1}, where the authors modeled and analyzed the elastic waves generated by an injected acoustic bubble. The analysis of acoustic cavitation through elastic interrogations can have useful applications in industry, and we plan to extend our analysis to this model in the time domain as well.}
\newline

\noindent
Let us now analyse more closely the form of the time dependent term appearing in the dominant term (\ref{main-estimate}). Using integration by parts and the zero initial conditions satisfied by $u^{i}$, we show that
$\displaystyle
\int_0^{\mathrm{t}-\mathrm{c}_0^{-1}|\mathrm{x}-\mathrm{z}|} \sin\big(\omega_\mathrm{M}(\mathrm{t}-\mathrm{c}_0^{-1}|\mathrm{x}-\mathrm{z}|-\tau)\big)u_{\mathrm{t}\mathrm{t}}^\mathrm{i}(\mathrm{z},\mathrm{\tau}) d\tau$ $$=\omega^2_\mathrm{M} \int_0^{\mathrm{t}-\mathrm{c}_0^{-1}|\mathrm{x}-\mathrm{z}|} \sin\big(\omega_\mathrm{M}(\mathrm{t}-\mathrm{c}_0^{-1}|\mathrm{x}-\mathrm{z}|-\tau)\big)u^\mathrm{i}(\mathrm{z},\mathrm{\tau}) d\tau - \omega_\mathrm{M} u^i(t-\mathrm{c}_0^{-1}|\mathrm{x}-\mathrm{z}|).
$$
With this decomposition, we see that the dominant part in (\ref{main-estimate}) decomposes into two reflected waves:
$$
\mathrm{U}_1(\mathrm{x},\mathrm{t}):=\frac{\omega^2_\mathrm{M}\rho_\mathrm{m}|\mathrm{B}|}{4\pi\overline{\mathrm{k}_\mathrm{c}}} \delta \frac{1}{|\partial\Omega|}\int_{\partial\Omega}\frac{1}{|\mathrm{x}-\mathrm{y}|}d\sigma_\mathrm{y}\; u^i(t-\mathrm{c}_0^{-1}|\mathrm{x}-\mathrm{z}|),
$$
which we call the primary reflected wave, and
$$
\mathrm{U}_2(\mathrm{x},\mathrm{t}):=\frac{\omega^3_\mathrm{M}\rho_\mathrm{m}|\mathrm{B}|}{4\pi\overline{\mathrm{k}_\mathrm{c}}} \delta \frac{1}{|\partial\Omega|}\int_{\partial\Omega}\frac{1}{|\mathrm{x}-\mathrm{y}|}d\sigma_\mathrm{y}\int_0^{\mathrm{t}-\mathrm{c}_0^{-1}|\mathrm{x}-\mathrm{z}|} \sin\big(\omega_\mathrm{M}(\mathrm{t}-\mathrm{c}_0^{-1}|\mathrm{x}-\mathrm{z}|-\tau)\big)u^\mathrm{i}(\mathrm{z},\mathrm{\tau}) d\tau
$$
which we call the secondary reflected wave.
\bigskip

\begin{enumerate}
\item The primary reflected wave is nothing but the incident wave time-shifted, with $\mathrm{c}_0^{-1}|\mathrm{x}-\mathrm{z}|$, and 'amplified' with the amplitude $\displaystyle\frac{\omega^2_\mathrm{M}\rho_\mathrm{m}|\mathrm{B}|}{4\pi\overline{\mathrm{k}_\mathrm{c}}} \delta \frac{1}{|\partial\Omega|} \int_{\partial\Omega}\frac{1}{|\mathrm{x}-\mathrm{y}|}d\sigma_\mathrm{y}=\frac{|\mathrm{B}|}{A_{\partial B}}\frac{\delta}{2\pi|\partial\Omega|} \int_{\partial\Omega}\frac{1}{|\mathrm{x}-\mathrm{y}|}d\sigma_\mathrm{y}\sim \frac{|\mathrm{B}|}{\mathrm{A}_{\partial \mathrm{B}}} \delta^{1-\mathrm{q}}$ for $x\in \mathbb{R}^3\setminus{\overline{\Omega}}$ with $\textbf{dist}(\mathrm{x},\Omega)= \delta^\mathrm{q}$. \textcolor{black}{Therefore, this term can be amplified until an order limited by the volume/area ratio $\frac{|\mathrm{B}|}{\mathrm{A}_{\partial \mathrm{B}}}$. It is reasonable to think that the maximum ratio will be reached for a sphere-shaped bubble.} 
\bigskip

\item The secondary reflected wave appears as a resonant (i.e. oscillating) field. The multiplying coefficient, that we call its amplitude, $\displaystyle\frac{\omega^3_\mathrm{M}\rho_\mathrm{m}|\mathrm{B}|}{4\pi\overline{\mathrm{k}_\mathrm{c}}} \delta \frac{1}{|\partial\Omega|}\int_{\partial\Omega}\frac{1}{|\mathrm{x}-\mathrm{y}|}d\sigma_\mathrm{y}$ has the form $\omega_\mathrm{M}\; \frac{|\mathrm{B}|}{A_{\partial B}} \delta^{1-\mathrm{q}}$ for $x\in \mathbb{R}^3\setminus{\overline{\Omega}}$ with $\textbf{dist}(\mathrm{x},\Omega)= \delta^\mathrm{q}$. As $\omega^2_\mathrm{M}$ is proportional to the scaled bulk modulus of the bubble, i.e. $\overline{\mathrm{k}}_\mathrm{c}$, therefore this amplitude can be increased by choosing the bubble with smaller bulk modulus $\overline{\mathrm{k}}_\mathrm{c}$ (\textcolor{black}{and not limited by the ratio $\frac{|\mathrm{B}|}{\mathrm{A}_{\partial \mathrm{B}}}$}). 
\newline

To show better understanding of the behavior of this 'oscillating' field, we consider the extreme case where the incident field $u^i$ is given by a wavefront $u^i( \mathrm{y},\mathrm{t}):=\frac{\delta(\mathrm{t}- \mathrm{c}_{\textcolor{black}{0}}^{-1}\vert \mathrm{y}-\mathrm{x}_0\vert)}{\vert \mathrm{y}-\mathrm{x}_0\vert}$. In this case, we have
$$
\mathrm{U}_2(\mathrm{x},\mathrm{t}):=\frac{\omega^3_\mathrm{M}\rho_\mathrm{m}|\mathrm{B}|}{4\pi\overline{\mathrm{k}_\mathrm{c}}} \delta \frac{1}{|\partial\Omega|}\int_{\partial\Omega}\frac{1}{|\mathrm{x}-\mathrm{y}|}d\sigma_\mathrm{y} \frac{\sin\big(\omega_\mathrm{M}(\mathrm{t}-\mathrm{c}_0^{-1}|\mathrm{x}-\mathrm{z}|-\mathrm{c}_0^{-1}|\mathrm{z}-\mathrm{x}_0|)\big)}{|\mathrm{z}-\mathrm{x}_0|}.
$$
This expression says, in particular, that there are times $\mathrm{t}$ when the sinus term reaches its maximum value $1$. Therefore, choosing properly the scaled bulk/size of the injected bubble would produce a desired pressure at any point $\mathrm{x}$ near the bubble, and at certain times.
\end{enumerate}

\noindent
To derive the asymptotic expansion in (\ref{main-estimate}), we use time-domain integral equation methods. The analysis is based on the time-domain Lippmann-Schwinger equation. To  characterize the dominating term, we reduce the computations to the stationary case where, due to the used scales of the bubble's mass density and bulk modulus, we retrieve the resonant behavior of the reflect field in the lines of \cite{Ahcene-bubbles} . A key point in the analysis, which is the main difficult part, is the derivation of a priori estimates to control the correcting and the remaining terms.  To derive these estimates, we use the approach by Lubich, see \cite{lubich}, to reduce the estimates to the Laplace domain, with the Laplace variable away from the real line and control the estimates in terms of this Laplace variable in appropriate weighed spaces. In the Laplace domain, we study the invertibility of the Lippmann-Schwinger operator using carefully (spectral) properties of the Newtonian as well as Magnetization type operators. The control of these estimates in terms of both the spacial-scales of the bubble properties (in terms of size, mass density and bulk modulus) and the Laplace variable in the weighed spaces is quite involved. This approach has been already used in \textcolor{black}{\cite{sini2}} where we needed to estimate 'only' the invertibility properties of the single layer operator. In the present work, we need to handle the full Lippmann-Schwinger operator which involves both the Newtonian and Magnetization type operators.  

\bigbreak

\noindent
The remaining part of the manuscript is divided into three sections. In Section \ref{sec-2}, we give the detailed proof of Theorem \ref{1.1} using some claimed a priori estimates. These estimates are proved in Section \ref{apri}. In Section \ref{appen}, as an appendix, we provide \textcolor{black}{a} few technical estimates that were used in Section \ref{sec-2} and Section \ref{apri}. 

\bigbreak

\noindent
In this manuscript, we use the notation $'\lesssim'$ to denote $'\le'$ with its right-hand side multiplied by a generic positive constant.


\section{Proof of Theorem \ref{1.1}}\label{sec-2} 

\textcolor{black}{In this section, we provide the asymptotic behaviour of the acoustic pressure field $u(\mathrm{x},\mathrm{t})$ to the solution of (\ref{mainfor}) as $\delta \ll 1$ for $(\mathrm{x},\mathrm{t})\in \mathbb{R}^3\setminus\overline{\Omega} \times (0,\mathrm{T}).$}

\subsection{Function Spaces}     

\textcolor{black}{We start this section by introducing the appropriate Sobolev spaces used in the analysis to derive the needed a priori estimates.} For details on those spaces, we refer to \cite{hduong, hduong2, monk, lubich, sini}. To begin, we set
\begin{align}\nonumber
    \mathrm{H}_0^\mathrm{r}(0,\mathrm{T}) := \Big\{\mathrm{g}|_{(0,\mathrm{T})}: \mathrm{g} \in \mathrm{H}^\mathrm{r}(\mathbb{R})\ \text{with} \ \mathrm{g}\equiv 0\ \text{in}\ (-\infty,0)\Big\}, \quad \mathrm{r} \in\mathbb{R}.
\end{align}
We then denote $\mathcal{D}(\mathbb{R},\mathrm{X})$ as the corresponding space of smooth and compactly supported function with images in the Hilbert space $\mathrm{X}$. Accordingly, we define $\mathcal{D}'(\mathbb{R}_+,\mathrm{X})$ as the $\mathrm{X}$-valued distributions on $\mathbb{R}$ that vanishes as $\mathrm{t}<0$ and tempered distributions are defined as $\mathcal{S}'(\mathbb{R}_+,\mathrm{X})$. We then set
\begin{align}\nonumber
    \mathcal{L}'(\mathbb{R}_+,\mathrm{X}):= \Big\{\mathrm{f}\in \mathcal{D}'(\mathbb{R}_+,\mathrm{X}): \mathrm{e}^{-\sigma \mathrm{t}}\mathrm{f}\in \mathcal{S}'(\mathbb{R}_+,\mathrm{X})\quad \text{for some} \quad \sigma>0.
\end{align}
Afterwards, we define the following function space
\begin{align}\nonumber
    \mathrm{H}^\mathrm{r}_{0,\sigma}(0,\mathrm{T};\mathrm{X}) := \Big\{\mathrm{f} \in \mathcal{L}'(\mathbb{R}_+,\mathrm{X}): \mathrm{e}^{-\sigma \mathrm{t}}\partial_\mathrm{t}^\mathrm{k}\mathrm{f} \in \mathrm{L}_0^2(0,\mathrm{T};\mathrm{X}), \ \mathrm{k}=1,...\mathrm{r}\Big\},\quad \mathrm{r}\in \mathbb Z_+,
\end{align}
with the following norm
\begin{align}
    \mathrm{H}^\mathrm{r}_{0,\sigma}(0,\mathrm{T};\mathrm{X}) := \Big(\int_0^\mathrm{T}\mathrm{e}^{-2\sigma \mathrm{t}}\Big[\Vert \mathrm{f}\Vert_\mathrm{X}^2 + \sum_{\mathrm{k}=1}^\mathrm{r} \mathrm{T}^{2\mathrm{k}}\Vert \partial_\mathrm{t}^\mathrm{k}\mathrm{f} \Vert_\mathrm{X}^2\Big]d\mathrm{t}\Big)^\frac{1}{2}.
\end{align}
As a next step, we state an equivalent norm for $u \in \mathrm{H}^{\mathrm{s}}(\partial\Omega)$ according to the Slobodeckij norm for $\mathrm{s}\in (0,1)$, \cite[pp. 20]{Grisvard}.
\begin{align}\label{normd1}
    \Vert \mathrm{f}\Vert_{\mathrm{H}^{\mathrm{s}}(\partial\Omega)} := \Vert \mathrm{f} \Vert_{\mathrm{L}^2(\partial \Omega)}^2 + \int_{\partial\Omega}\int_{\partial\Omega}\frac{|\mathrm{f}(\mathrm{x})-\mathrm{f}(\mathrm{y})|^2}{|\mathrm{x}-\mathrm{y}|^{2+2\mathrm{s}}}d\sigma_\mathrm{x}d\sigma_\mathrm{y},
\end{align}
and we define the following dual norm for $\varphi \in \mathrm{H}^{-\frac{1}{2}}(\partial\Omega)$
\begin{align}\label{normd2}
 \Vert \varphi \Vert_{\mathrm{H}^{-\frac{1}{2}}(\partial\Omega)} &= \sup_{0\neq \mathrm{f} \in \mathrm{H}^{\frac{1}{2}}(\partial\Omega )} \dfrac{|\langle \varphi, \mathrm{f} \rangle_{\partial \Omega}|}{\Vert \mathrm{f} \Vert_{\mathrm{H}^{\frac{1}{2}}(\partial\Omega )}},
 \end{align}
\textcolor{black}{where $\langle\cdot,\cdot\rangle_{\partial\Omega}$ denotes the duality pairing between $\mathrm{H}^{\frac{1}{2}}(\partial\Omega)$ and $\mathrm{H}^{-\frac{1}{2}}(\partial\Omega)$. Let us also denote
\begin{align}
    \mathrm{H}_0^{-\frac{1}{2}}(\partial\Omega):= \Big\{\mathrm{f}\in \mathrm{H}^{-\frac{1}{2}}(\partial\Omega): \int_{\partial\Omega}\mathrm{f}(\mathrm{x})d\sigma_\mathrm{x} =0  \Big\}\; \text{and analogously we define}\ \mathrm{H}_0^{\frac{1}{2}}(\partial\Omega).  
\end{align}
In our analysis, we require several function spaces, which we denote using blackboard bold font to represent vector fields within $\mathbb R^3$. We first introduce the following function spaces
\begin{align}\nonumber
   \mathbb{H}(\text{div},\Omega):= \Big\{ \mathrm{f}\in \big(\mathrm{L}^{2}(\Omega)\big)^3 :\; \text{div}\; \mathrm{f} \in \mathrm{L}^{2}(\Omega)\Big\} \; \text{and}\
   \mathbb{H}(\text{curl},\Omega):= \Big\{ \mathrm{f}\in \big(\mathrm{L}^{2}(\Omega)\big)^3 :\; \text{curl}\; \mathrm{f} \in \big(\mathrm{L}^{2}(\Omega)\big)^3\Big\}.
\end{align}
We then consider the space of divergence-free as well as the space of irrotational vector fields 
\begin{align}\nonumber
   \mathbb{H}(\text{div}\;0,\Omega):= \Big\{ \mathrm{f}\in \mathbb{H}(\text{div},\Omega) :\; \text{div}\; \mathrm{f}= 0 \Big\} \; \text{and}\
   \mathbb{H}(\text{curl}\; 0,\Omega):= \Big\{ \mathrm{f}\in \mathbb{H}(\text{curl},\Omega) :\; \text{curl}\; \mathrm{f} = 0\Big\},
\end{align}
and their sub-spaces
\begin{align}\nonumber
   \mathbb{H}_0(\text{div}\;0,\Omega):= \Big\{ \mathrm{f}\in \mathbb{H}(\text{div}\;0,\Omega) :\; \nu\cdot\mathrm{f}= 0\; \text{on}\ \partial\Omega \Big\} \; \text{and}\
   \mathbb{H}_0(\text{curl}\; 0,\Omega):= \Big\{ \mathrm{f}\in \mathbb{H}(\text{curl}\;0,\Omega) :\; \nu \times \mathrm{f} = 0 \; \text{on}\ \partial\Omega\Big\},
\end{align}
respectively. We also use the grad-harmonic sub-space
\begin{align}\nonumber
    \nabla \mathbb{H}_{\text{arm}} := \Big\{ u \in \big(\mathrm{L}^{2}(\Omega)\big)^3: \exists \  \varphi \ \text{s.t.} \ u = \nabla \varphi,\; \varphi\in \mathrm{H}^1(\Omega)\; \text{and} \ \Delta \varphi = 0 \Big\}.
\end{align}
Throughout this paper, we use the notation $\mathcal{L}(\mathrm{X}; \mathrm{Z})$ to refer to the set of linear bounded operators, defined from $\mathrm{X}$ to $\mathrm{Z}$. Additionally, we define $\mathcal{L}(\mathrm{X})$ to be the same as $\mathcal{L}(\mathrm{X}; \mathrm{X})$. Furthermore, we use the standard Sobolev space of order $\mathrm{s}$ on $\Omega$, which we denote as $\mathrm{H}^\mathrm{s}(\Omega)$. Let us finally mention that the operators we employ are defined using a bold symbol.}


\subsection{Asymptotic Expansion of the Acoustic Pressure Field}   

As the incident wave $u^i$ satisfies
\begin{align}\label{wave1}
    \mathrm{k}_\mathrm{m}^{-1}u_{\mathrm{t}\mathrm{t}}^\mathrm{i}- \nabla \cdot\rho_\mathrm{m}^{-1}\nabla u^\mathrm{i} = 0 \quad \quad \text{in} \quad \mathbb{R}^3\setminus{\{\mathrm{x}_0\}}\times (0,\mathrm{T}), \ \text{with}\ \mathrm{x}_0 \in \mathbb{R}^3\setminus\overline{\Omega},
\end{align}
then the scattered wave $u^\mathrm{s}$ satisfies 
\begin{align}\label{wellposed}
    \mathrm{k}^{-1}u_{\mathrm{t}\mathrm{t}}^\mathrm{s}- \text{div}\rho^{-1}\nabla u^\mathrm{s} = (\mathrm{k}_\mathrm{m}^{-1} - \mathrm{k}^{-1})u^\mathrm{i}_{\mathrm{t}\mathrm{t}} - \text{div} (\rho_\mathrm{m}^{-1} - \rho^{-1})\nabla u^\mathrm{i},\ \text{in}\ \mathbb{R}^3\times(0,\mathrm{T}),
\end{align}
with zero initial condition. \textcolor{black}{We denote $\mathrm{F}(\mathrm{x},\mathrm{t}):= (\mathrm{k}_\mathrm{m}^{-1} - \mathrm{k}^{-1})u^\mathrm{i}_{\mathrm{t}\mathrm{t}} - \text{div} (\rho_\mathrm{m}^{-1} - \rho^{-1})\nabla u^\mathrm{i}.$ The following Lemma states the well-posedness and same regularity properties of (\ref{wellposed}) $\big(\text{or}\ (\ref{mainfor})\big).$
\newline
\begin{lemma} \label{wellpose}
    Let $\mathrm{F}$ belongs to $\mathrm{H}^{\mathrm{r}+1}_{0,\sigma}\big(0,\mathrm{T};\mathrm{H}^{-1}(\mathbb{R}^3)\big)$ that is supported in the region $\Omega\times(0,\mathrm{T})$ and $\mathrm{r}\in \mathbb N$. There exists a unique solution $u:= u^\mathrm{i}+u^\mathrm{s}$, such that $u^\mathrm{s}$ belongs to $\mathrm{H}^\mathrm{r}_{0,\sigma}\big(0,\mathrm{T};\mathrm{H}^{1}(\mathbb{R}^3)\big)$ for the problem (\ref{mainfor}).
\begin{proof}
See Section \ref{well1}.
\end{proof}
\end{lemma}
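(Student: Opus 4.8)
The plan is to follow the approach of Lubich \cite{lubich}, reducing the time-domain transmission problem to a family of elliptic problems in the Laplace domain and then transferring the resulting frequency-domain bounds back to the weighted Bochner--Sobolev spaces $\mathrm{H}^\mathrm{r}_{0,\sigma}$. First I would take the Laplace transform in time, with complex variable $s$ and $\sigma := \mathrm{Re}\, s > 0$, of the equation (\ref{wellposed}) for $u^\mathrm{s}$. Because $\mathrm{F}$ is causal and supported in $\Omega \times (0,\mathrm{T})$, its transform $\widehat{\mathrm{F}}(\cdot,s)$ is holomorphic for $\sigma$ large enough, and the zero initial conditions remove all boundary-in-time contributions, so the transformed equation reads $s^2\, \mathrm{k}^{-1}\widehat{u}^\mathrm{s} - \mathrm{div}\,\rho^{-1}\nabla\widehat{u}^\mathrm{s} = \widehat{\mathrm{F}}$ on $\mathbb{R}^3$. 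The transmission conditions on $\partial\Omega$ need not be imposed separately: seeking $\widehat{u}^\mathrm{s}(\cdot,s) \in \mathrm{H}^1(\mathbb{R}^3)$ encodes the continuity $u|_+ = u|_-$, while the natural boundary term of the form $\int_{\mathbb{R}^3}\rho^{-1}\nabla\widehat u^\mathrm{s}\cdot\nabla\overline v$ encodes the flux condition $\rho_\mathrm{m}^{-1}\partial_\nu u|_+ = \rho_\mathrm{c}^{-1}\partial_\nu u|_-$ automatically through the piecewise-constant $\rho$.

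Next I would establish, for each fixed $s$, existence and uniqueness of $\widehat u^\mathrm{s}(\cdot,s) \in \mathrm{H}^1(\mathbb{R}^3)$ together with an explicit polynomial-in-$|s|$ bound. The variational form is $a_s(u,v) := s^2\int_{\mathbb{R}^3}\mathrm{k}^{-1} u\,\overline v + \int_{\mathbb{R}^3}\rho^{-1}\nabla u\cdot\nabla\overline v$, and the key coercivity comes from testing against $\overline{s}\,u$: since $\mathrm{Re}(\overline s\, s^2) = \sigma\,|s|^2$ and $\mathrm{Re}(\overline s) = \sigma$, and $\mathrm{k},\rho$ are positive and bounded below, one obtains $\mathrm{Re}\big(\overline s\, a_s(u,u)\big) \gtrsim \sigma\big(|s|^2\|u\|_{\mathrm{L}^2}^2 + \|\nabla u\|_{\mathrm{L}^2}^2\big)$. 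The mass term $|s|^2\|u\|_{\mathrm{L}^2}^2$, positive precisely because $\sigma>0$, is what provides coercivity on the unbounded domain $\mathbb{R}^3$, reflecting the exponential spatial decay of the Laplace-domain solution in this dissipative regime. Lax--Milgram then yields a unique $\widehat u^\mathrm{s}$, and pairing the right-hand side as $\overline s\,\langle\widehat{\mathrm{F}},\widehat u^\mathrm{s}\rangle$ gives, in the $s$-scaled norm $\|u\|_{s}^2 := |s|^2\|u\|_{\mathrm{L}^2}^2 + \|\nabla u\|_{\mathrm{L}^2}^2$, the estimate $\|\widehat u^\mathrm{s}(\cdot,s)\|_{s} \lesssim \frac{|s|}{\sigma}\,\|\widehat{\mathrm{F}}(\cdot,s)\|_{\mathrm{H}^{-1}(\mathbb{R}^3)}$.

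Finally I would transfer this frequency bound back to the time domain. By the Plancherel identity for the Laplace transform underlying the definition of $\mathrm{H}^\mathrm{r}_{0,\sigma}$, multiplication by a power of $s$ on the frequency side corresponds to a time derivative, so the single power of $|s|$ lost in the estimate above accounts for exactly one derivative: a datum $\mathrm{F}\in\mathrm{H}^{\mathrm{r}+1}_{0,\sigma}(0,\mathrm{T};\mathrm{H}^{-1}(\mathbb{R}^3))$ produces $u^\mathrm{s}\in\mathrm{H}^{\mathrm{r}}_{0,\sigma}(0,\mathrm{T};\mathrm{H}^1(\mathbb{R}^3))$. The uniform holomorphy of $s\mapsto\widehat u^\mathrm{s}(\cdot,s)$ in the half-plane $\mathrm{Re}\, s > \sigma_0$, guaranteed by the uniform Lax--Milgram bound, legitimizes the inverse-transform step and yields causality and uniqueness of $u^\mathrm{s}$.

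I expect the main obstacle to be this last transfer step rather than the elliptic theory itself: one must verify that the polynomial frequency bound holds uniformly on a vertical line $\mathrm{Re}\, s = \sigma$ and then integrate it against the weight $\mathrm{e}^{-2\sigma \mathrm{t}}$ compatibly with the norm of $\mathrm{H}^\mathrm{r}_{0,\sigma}$, keeping track of the factors $\mathrm{T}^{2\mathrm{k}}$ appearing there. The coercivity and Lax--Milgram arguments are standard once the $\overline s$-testing trick is in place; the care lies in the bookkeeping of the $s$-weights and in matching the exact order of the derivative loss to the index shift $\mathrm{r}+1\mapsto\mathrm{r}$ in the statement.
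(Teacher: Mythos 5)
Your proposal is correct and follows essentially the same route as the paper's proof in Section \ref{well1}: Laplace transform to an elliptic family, coercivity via the $\overline{\bm{\mathrm{s}}}$-testing trick, Lax--Milgram giving the bound $\Vert \Tilde{u}^\mathrm{s}(\cdot,\bm{\mathrm{s}})\Vert_{\mathrm{H}^{1}(\mathbb R^3)} \lesssim |\bm{\mathrm{s}}|\,\Vert \mathrm{F}^\ell(\cdot,\bm{\mathrm{s}}) \Vert_{\mathrm{H}^{-1}(\mathbb R^3)}$, and a Plancherel-type transfer in which the single lost power of $|\bm{\mathrm{s}}|$ accounts exactly for the index shift $\mathrm{r}+1\mapsto\mathrm{r}$. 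The only step the paper spells out that you leave implicit is the final verification that the inverse Laplace transform is indeed a weak solution of the time-domain problem, which is routine given the uniform bounds you establish.
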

\noindent
The well-posedness of (\ref{mainfor}), seen as a transmission problem is studied in \cite{sayas-transmission}. In Lemma \ref{wellpose}, we also derive the regularity-in-time properties that are needed in our subsequent analysis, see Remark \ref{r2.1}.
\begin{remark}\label{r2.1}
Assuming that $\lambda \in \mathrm{C}^9(\mathbb R)$ and has its support in the positive real line, then by choosing $\mathrm{r}=8$, it follows that $\mathrm{F} \in \mathrm{H}^9_{0,\sigma}\big(0,\mathrm{T};\mathrm{H}^{-1}(\mathbb{R}^3)\big)$. As a result, $u^\mathrm{s} \in \mathrm{H}^8_{0,\sigma}\big(0,\mathrm{T};\mathrm{H}^{1}(\mathbb{R}^3)\big)$ and then in $\mathcal{C}^7\big(0,\mathrm{T};\mathrm{H}^{1}(\mathbb{R}^3)\big)$. 
\newline
Additionally, we have the equation $\nabla\cdot\rho^{-1}\nabla u^\mathrm{s}= \mathrm{F}-\mathrm{k}^{-1}u^\mathrm{s}_{\mathrm{t}\mathrm{t}}$, where the right hand side belongs to $\mathrm{H}^6_{0,\sigma}\big(0,\mathrm{T};\mathrm{L}^{2}(\Omega)\big).$ This implies that $\rho^{-1}\nabla u^\mathrm{s} \in \mathrm{H}^6_{0,\sigma}\big(0,\mathrm{T};\mathbb{H}(\text{div},\Omega)\big).$ Hence, $\partial_\nu u^\mathrm{s}$ is in $\mathrm{H}^6_{0,\sigma}\big(0,\mathrm{T};\mathrm{H}^{-\frac{1}{2}}(\partial\Omega)\big)$ and eventually, $\partial_\nu u^\mathrm{s}\in \mathcal{C}^5\big(0,\mathrm{T};\mathrm{H}^{-\frac{1}{2}}(\partial\Omega)\big).$ These regularity properties of $u^\mathrm{s}$ and $\partial_\nu u^\mathrm{s}$ are used in the following asymptotic analysis.
\end{remark}}
\noindent
The scattered wave $u^\mathrm{s}$ satisfies the following
\begin{align} \label{wave2}
\nonumber
    \mathrm{k}_\mathrm{m}^{-1}u_{\mathrm{t}\mathrm{t}}^\mathrm{s}- \text{div}\rho_\mathrm{m}^{-1}\nabla u^\mathrm{s} &= \mathrm{k}_\mathrm{m}^{-1}u_{\mathrm{t}\mathrm{t}}- \text{div}\rho_\mathrm{m}^{-1}\nabla u
    \\ \nonumber &= \mathrm{k}_\mathrm{m}^{-1}u_{\mathrm{t}\mathrm{t}}+ \mathrm{k}^{-1}(\mathrm{x}) u_{\mathrm{t}\mathrm{t}}- \mathrm{k}^{-1}(\mathrm{x})u_{\mathrm{t}\mathrm{t}}- \text{div}\rho_\mathrm{m}^{-1}\nabla u + \text{div}\rho^{-1}(\mathrm{x)}\nabla u - \text{div}\rho^{-1}(\mathrm{x})\nabla u
    \\ &= (\mathrm{k}_\mathrm{m}^{-1} - \mathrm{k}^{-1}(\mathrm{x}))u_{\mathrm{t}\mathrm{t}} - \text{div} (\rho_\mathrm{m}^{-1} - \rho^{-1}(\mathrm{x}))\nabla u, \quad \text{in}\ \mathbb{R}^3\times(0,\mathrm{T}).
\end{align}
We set $\mathrm{c}:=\sqrt{\frac{\mathrm{k}}{\rho}}$ as bulk modulus and mass density may generate a different propagation speed of the pressure field inside and outside of the bubble $\Omega$. Then we consider that the background velocity given by $\mathrm{c}_0= \sqrt{\frac{\mathrm{k}_\mathrm{m}}{\rho_\mathrm{m}}}$ and we denote the unperturbed Green function of the background medium satisfying the wave equation (\ref{wave1}) by
\begin{align}
    \mathrm{G}(\mathrm{x},\mathrm{t}) = \rho_\mathrm{m}\frac{\delta_0(\mathrm{t}-\mathrm{c}_0^{-1} |\mathrm{x}|)}{4\pi |\mathrm{x}|}\quad \textcolor{black}{\text{in}\; \mathbb{R}^3\times \mathbb{R},}
\end{align}
where $\delta_0$ is the Dirac delta distribution and $\mathbb{G}$ is known as the fundamental solution of the wave equation. Let $u^\mathrm{f}$ be the solution of
$\frac{\rho_\mathrm{m}}{\mathrm{k}_\mathrm{m}}u_{\mathrm{t}\mathrm{t}}^\mathrm{f} - \Delta u^\mathrm{f} = \mathrm{f}$, in $\mathbb{R}^3$ with $u^\mathrm{f}(\mathrm{x},0) = u_\mathrm{t}^\mathrm{f}(\mathrm{x},0)=0$, where $\mathrm{f}\in \textcolor{black}{\mathrm{H}^{\mathrm{r}+1}_{0,\sigma}}\big(0,\mathrm{T};\mathrm{H}^{-1}(\mathbb{R}^3)\big)$ has a compact support. Then we have
\begin{align}
    u^\mathrm{f} = \bm{\mathrm{V}}_\Omega\big[\mathrm{f}\big](\mathrm{x},\mathrm{t}) := \int_\mathbb{R}\int_\Omega \mathrm{G}(\mathrm{x}-\mathrm{y},\mathrm{t}-\tau)\mathrm{f}(\mathrm{y},\tau)d\mathrm{y}d\tau, \quad \text{for}, \quad (\mathrm{x},\mathrm{t})\in \mathbb{R}^3\times \mathbb{R},
\end{align}
and it lies in $\textcolor{black}{\mathrm{H}^\mathrm{r}_{0,\sigma}}\big(0,\mathrm{T};\mathrm{H}^{1}(\mathbb{R}^3)\big)$.
\newline

\noindent
Therefore, we deduce the following Lippmann-Schwinger equation for (\ref{wave2})
\begin{align}\label{ls}
    u + (\mathrm{k}_\mathrm{c}^{-1}- \mathrm{k}_\mathrm{m}^{-1})\int_{\textcolor{black}{\mathbb R}}\int_\Omega \mathrm{G}(\mathrm{x}-\mathrm{y};\mathrm{t}-\tau) u_{\mathrm{t}\mathrm{t}}\ d\mathrm{y}d\tau - (\rho_\mathrm{c}^{-1}- \rho_\mathrm{m}^{-1})\text{div}\int_{\textcolor{black}{\mathbb R}}\int_\Omega \mathrm{G}(\mathrm{x}-\mathrm{y};\mathrm{t}-\tau)\nabla u \ d\mathrm{y}d\tau = u^\mathrm{i}.
\end{align}
Let us now denote $\beta := \mathrm{k}_\mathrm{c}^{-1}- \mathrm{k}_\mathrm{m}^{-1} \ \text{and} \ \alpha: = \rho_\mathrm{c}^{-1} - \rho_\mathrm{m}^{-1} $. Here we introduce the basic concepts of acoustic layer potentials in the time domain  and how they can be used to represent the solution of the problem given by (\ref{wave2}) and with zero initial conditions.
\newline

\noindent
\textcolor{black}{We start by rewriting the volume integral equation (\ref{ls})} using retarded potentials and integration by parts as follows:
\begin{align}\label{3.6}
    u + \gamma \int_\Omega\frac{\rho_\mathrm{m}}{4\pi|\mathrm{x}-\mathrm{y}|}u_{\mathrm{t}\mathrm{t}}(\mathrm{y},\mathrm{t}-\mathrm{c}_0^{-1}|\mathrm{x}-\mathrm{y}|)d\mathrm{y} + \alpha \int_{\partial \Omega} \frac{\rho_\mathrm{m}}{4\pi|\mathrm{x}-\mathrm{y}|}\partial_\nu u(\mathrm{y}, \mathrm{t}- \mathrm{c}_0^{-1}|\mathrm{x}-\mathrm{y}|)d\sigma_\mathrm{y} = u^\mathrm{i},
\end{align}
where we denote by $\gamma := \beta-\alpha \frac{\rho_\mathrm{c}}{\mathrm{k}_\mathrm{c}}$. 
\newline

\noindent
For $\mathrm{x}$ outside $\Omega$ we rewrite (\ref{3.6}) as
\begin{align}\label{f1}
    u^\mathrm{s}(\mathrm{x},\mathrm{t})
    &=\nonumber -  \gamma\int_\Omega\frac{\rho_\mathrm{m}}{4\pi|\mathrm{x}-\mathrm{y}|} u_{\mathrm{t}\mathrm{t}}(\mathrm{y},\mathrm{t}-\mathrm{c}_0^{-1}|\mathrm{x}-\mathrm{y}|)d\mathrm{y} - \alpha \int_{\partial \Omega} \frac{\rho_\mathrm{m}}{4\pi|\mathrm{x}-\mathrm{y}|}\partial_\nu u(\mathrm{y}, \mathrm{t}- \mathrm{c}_0^{-1}|\mathrm{x}-\mathrm{y}|)d\sigma_\mathrm{y}
    \\ \nonumber &= -\frac{\rho_\mathrm{m}}{4\pi}\Bigg[\frac{\gamma}{|\mathrm{x}-\mathrm{z}|}\int_\Omega u_{\mathrm{t}\mathrm{t}}(\mathrm{y},\mathrm{t}-\mathrm{c}_0^{-1}|\mathrm{x}-\mathrm{z}|)d\mathrm{y} + \gamma \int_\Omega u_{\mathrm{t}\mathrm{t}}(\mathrm{y},\mathrm{t}-\mathrm{c}_0^{-1}|\mathrm{x}-\mathrm{z}|) \Big(\frac{1}{|\mathrm{x}-\mathrm{y}|}- \frac{1}{|\mathrm{x}-\mathrm{z}|}\Big)d\mathrm{y}
    \\ \nonumber &-\gamma \int_\Omega\frac{u_{\mathrm{t}\mathrm{t}}(\mathrm{y},\mathrm{t}-\mathrm{c}_0^{-1}|\mathrm{x}-\mathrm{z}|)- u_{\mathrm{t}\mathrm{t}}(\mathrm{y},\mathrm{t}-\mathrm{c}_0^{-1}|\mathrm{x}-\mathrm{y}|)}{|\mathrm{x}-\mathrm{y}|}d\mathrm{y} + \frac{\alpha}{|\partial\Omega|}\int_{\partial\Omega}\frac{1}{|\mathrm{x}-\mathrm{y}|}\int_{\partial\Omega}\partial_\nu u(\mathrm{y},\mathrm{t}-\mathrm{c}_0^{-1}|\mathrm{x}-\mathrm{y}|)d\sigma_\mathrm{y} 
    \\ &+ \alpha\int_{\partial\Omega}\Big(\frac{1}{|\mathrm{x}-\mathrm{y}|}-\frac{1}{|\partial\Omega|}\int_{\partial\Omega}\frac{1}{|\mathrm{x}-\mathrm{y}|}\Big) \partial_\nu u (\mathrm{y},\mathrm{t}-\mathrm{c}_0^{-1}|\mathrm{x}-\mathrm{y}|) d\sigma_\mathrm{y}\Bigg].
\end{align}
Given that $\mathrm{z} \in \Omega$ and $\mathrm{x}\in \mathbb{R}^3\setminus\overline{\Omega}$, we estimate using Taylor's series expansion
\begin{align}\label{t2}
 u_{\mathrm{t}\mathrm{t}}(\mathrm{y},\mathrm{t}-\mathrm{c}_0^{-1}|\mathrm{x}-\mathrm{y}|)- u_{\mathrm{t}\mathrm{t}}(\mathrm{y},\mathrm{t}-\mathrm{c}_0^{-1}|\mathrm{x}-\mathrm{z}|) =  \mathrm{c}_0^{-1} (\mathrm{y}-\mathrm{z})\nabla|\mathrm{x}-\mathrm{z}^*|\partial_\mathrm{t}^3 u(\mathrm{y},\mathrm{t}_0^*),
\end{align}
and
\begin{align}\label{tt2}
\frac{1}{|\mathrm{x}-\mathrm{y}|}-\frac{1}{|\mathrm{x}-\mathrm{z}|} = (\mathrm{y}-\mathrm{z})\nabla\frac{1}{|\mathrm{x}-\mathrm{z}^*|},
\end{align}
where, $ \mathrm{t}_0^*\in (\mathrm{t}-\mathrm{c}_0^{-1}|\mathrm{x}-\mathrm{y}|,\mathrm{t}-\mathrm{c}_0^{-1}|\mathrm{x}-\mathrm{z}|)$ and $\mathrm{z}^* \in \Omega.$ 
\newline

\noindent
Then, using the approximations (\ref{t2}) and (\ref{tt2}) in (\ref{f1}) we obtain
\begin{align}\label{3.11}
    u^\mathrm{s}(\mathrm{x},\mathrm{t})
    &= \nonumber -\frac{\rho_\mathrm{m}}{4\pi}\Bigg[\frac{\gamma}{|\mathrm{x}-\mathrm{z}|}\int_\Omega u_{\mathrm{t}\mathrm{t}}(\mathrm{y},\mathrm{t}-\mathrm{c}_0^{-1}|\mathrm{x}-\mathrm{z}|)d\mathrm{y} + \gamma\nabla\frac{1}{|\mathrm{x}-\mathrm{z}^*|} \int_\Omega u_{\mathrm{t}\mathrm{t}}(\mathrm{y},\mathrm{t}-\mathrm{c}_0^{-1}|\mathrm{x}-\mathrm{z}|)(\mathrm{y}-\mathrm{z}) d\mathrm{y}
    \\ \nonumber &- \frac{\gamma}{|\mathrm{x}-\mathrm{z}|} \int_\Omega\mathrm{c}_0^{-1} (\mathrm{y}-\mathrm{z})\nabla|\mathrm{x}-\mathrm{z}^*|\partial_\mathrm{t}^3 u(\mathrm{y},\mathrm{t}_0^*)d\mathrm{y} - \gamma\nabla\frac{1}{|\mathrm{x}-\mathrm{z}^*|} \int_\Omega\mathrm{c}_0^{-1} (\mathrm{y}-\mathrm{z})\nabla|\mathrm{x}-\mathrm{z}^*|\partial_\mathrm{t}^3 u(\mathrm{y},\mathrm{t}_0^*)d\mathrm{y}
    \\&+\nonumber \frac{\alpha}{|\partial\Omega|}\int_{\partial\Omega}\frac{1}{|\mathrm{x}-\mathrm{y}|}\int_{\partial\Omega}\partial_\nu u(\mathrm{y},\mathrm{t}-\mathrm{c}_0^{-1}|\mathrm{x}-\mathrm{y}|)d\sigma_\mathrm{y} 
    \\  &+ \alpha\int_{\partial\Omega}\Big(\frac{1}{|\mathrm{x}-\mathrm{y}|}-\frac{1}{|\partial\Omega|}\int_{\partial\Omega}\frac{1}{|\mathrm{x}-\mathrm{y}|}\Big) \partial_\nu u (\mathrm{y},\mathrm{t}-\mathrm{c}_0^{-1}|\mathrm{x}-\mathrm{y}|) d\sigma_\mathrm{y}\Bigg].
\end{align}
We now perform the subsequent computations. First,
\begin{align}\label{3.14}
    \Big|\frac{\gamma\rho_\mathrm{m}}{4\pi|\mathrm{x}-\mathrm{z}|}\int_\Omega u_{\mathrm{t}\mathrm{t}}(\mathrm{y},\mathrm{t}-\mathrm{c}_0^{-1}|\mathrm{x}-\mathrm{z}|)d\mathrm{y}\Big| \lesssim |\mathrm{x}-\mathrm{z}|^{-1}\delta^{\frac{3}{2}}\Vert \partial_\mathrm{t}^\mathrm{2}u(\cdot,\mathrm{t})\Vert_{\mathrm{L}^2(\Omega)}.
\end{align}
We then approximate the following term.
\begin{align}\label{3.12}
   \Big| \frac{\gamma\rho_\mathrm{m}}{4\pi}\nabla\frac{1}{|\mathrm{x}-\mathrm{z}^*|} \int_\Omega u_{\mathrm{t}\mathrm{t}}(\mathrm{y},\mathrm{t}-\mathrm{c}_0^{-1}|\mathrm{x}-\mathrm{z}|)(\mathrm{y}-\mathrm{z}) d\mathrm{y}\Big| &\lesssim \nonumber \frac{\gamma\rho_\mathrm{m}}{4\pi}\Big|\nabla\frac{1}{|\mathrm{x}-\mathrm{z}^*|}\Big|\Vert \cdot-\mathrm{z}\Vert_{\mathrm{L}^2(\Omega)} \Vert \partial_\mathrm{t}^2u(\cdot,\mathrm{t})\Vert_{\mathrm{L}^2(\Omega)}
    \\ &= \mathcal{O}\Big(\delta^\frac{5}{2} |\mathrm{x}-\mathrm{z}^*|^{-2}\Vert \partial_\mathrm{t}^2u(\cdot,\mathrm{t})\Vert_{\mathrm{L}^2(\Omega)}\Big).
\end{align}
Further, we have
\begin{align}\label{f3}
\nonumber
   &= \nonumber \Big| \mathrm{c}_0^{-1}\frac{\gamma\rho_\mathrm{m}}{4\pi|\mathrm{x}-\mathrm{z}|}\nabla|\mathrm{x}-\mathrm{z}^*|\int_\Omega (\mathrm{y}-\mathrm{z})\partial_\mathrm{t}^3(\mathrm{y},\mathrm{t}_0^*) d\mathrm{y}
    + \mathrm{c}_0^{-1}\frac{\gamma\rho_\mathrm{m}}{4\pi}\nabla|\mathrm{x}-\mathrm{z}^*|\nabla\frac{1}{|\mathrm{x}-\mathrm{z}^*|}\int_\Omega (\mathrm{y}-\mathrm{z})^2\partial_\mathrm{t}^3(\mathrm{y},\mathrm{t}_0^*) d\mathrm{y}\Big|
    \\ &\lesssim\nonumber\frac{\gamma\rho_\mathrm{m}}{4\pi}\Bigg[ \frac{1}{|\mathrm{x}-\mathrm{z}|}\Big|\nabla|\mathrm{x}-\mathrm{z}^*|\Big| \Vert \cdot-\mathrm{z}\Vert_{\mathrm{L}^2(\Omega)} \Vert \partial_\mathrm{t}^3u(\cdot,\mathrm{t}_0^*)\Vert_{\mathrm{L}^2(\Omega)} + \Big|\nabla|\mathrm{x}-\mathrm{z}^*|\Big|\Big|\nabla\frac{1}{|\mathrm{x}-\mathrm{z}^*|}\Big| \Vert (\cdot-\mathrm{z})^2\Vert_{\mathrm{L}^2(\Omega)} \Vert \partial_\mathrm{t}^3u(\cdot,\mathrm{t}_0^*)\Vert_{\mathrm{L}^2(\Omega)}\Bigg]
    \\ &= \mathcal{O}(\delta^\frac{5}{2} |\mathrm{x}-\mathrm{z}|^{-1}\Vert \partial_\mathrm{t}^3u(\cdot,\mathrm{t}_0^*)\Vert_{\mathrm{L}^2(\Omega)}).
\end{align}
Using (\ref{3.14}), (\ref{3.12}), and (\ref{f3}) in (\ref{3.11}), we obtain for $\mathrm{x}\in \mathbb{R}^{\textcolor{black}{3}}\setminus\overline{\Omega}$ such that $\textbf{dist}(\mathrm{x},\Omega)\sim \delta^\mathrm{q}$ \big(and then $|\mathrm{x}-\mathrm{z}| \sim \delta+\delta^\mathrm{q}$\big), where $\mathrm{q}\in [0,1],$ 
\begin{align} \label{finalestimate}
    u^\mathrm{s}(\mathrm{x},\mathrm{t})
    &= \nonumber \frac{\alpha\rho_\mathrm{m}}{4\pi}\frac{1}{|\partial\Omega|}\int_{\partial\Omega}\frac{1}{|\mathrm{x}-\mathrm{y}|}\int_{\partial\Omega}\partial_\nu u(\mathrm{y},\mathrm{t}-\mathrm{c}_0^{-1}|\mathrm{x}-\mathrm{y}|)d\sigma_\mathrm{y} 
    \\ \nonumber &- \frac{\alpha\rho_\mathrm{m}}{4\pi}\int_{\partial\Omega}\Big(\frac{1}{|\mathrm{x}-\mathrm{y}|}-\frac{1}{|\partial\Omega|}\int_{\partial\Omega}\frac{1}{|\mathrm{x}-\mathrm{y}|}\Big) \partial_\nu u (\mathrm{y},\mathrm{t}-\mathrm{c}_0^{-1}|\mathrm{x}-\mathrm{y}|) d\sigma_\mathrm{y}
    + \mathcal{O}\Big(\delta^\frac{5}{2} |\mathrm{x}-\mathrm{z}|^{-2}\Vert \partial_\mathrm{t}^2u(\cdot,\mathrm{t})\Vert_{\mathrm{L}^2(\Omega)}\Big)
    \\ &+ \mathcal{O}\Big(\delta^\frac{5}{2} |\mathrm{x}-\mathrm{z}|^{-1}\Vert \partial_\mathrm{t}^3u(\cdot,\mathrm{t})\Vert_{\mathrm{L}^2(\Omega)}\Big) + \mathcal{O}\Big(\delta^\frac{3}{2} |\mathrm{x}-\mathrm{z}|^{-1}\Vert \partial_\mathrm{t}^2u(\cdot,\mathrm{t})\Vert_{\mathrm{L}^2(\Omega)}\Big).
\end{align}
\bigbreak
\noindent
We state the following two Propositions that will be useful to estimate the reminder part of the previous expression.
\begin{proposition}\label{p1}
For $u = u^\mathrm{i}+u^\mathrm{s}$ as the solution of (\ref{mainfor}) we have the following estimates
\begin{align}
\Vert \partial_\mathrm{t}^\mathrm{k}u(\cdot,\mathrm{t})\Vert_{\mathrm{L}^2(\Omega)} \lesssim \delta^{\frac{3}{2}}, \quad \mathrm{t}\in [0,\mathrm{T}], \quad \mathrm{k}=0,1,\ldots.
\end{align}
\end{proposition}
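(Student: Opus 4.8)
The plan is to pass to the Laplace domain in time, following \cite{lubich}, to reduce everything to the fixed reference bubble $\mathrm{B}$, and to read off the power $\delta^{3/2}$ as the volume factor $|\Omega|^{1/2}$ while proving that the rescaled interior field stays of order one uniformly in $\delta$. Taking the Laplace transform (variable $\mathrm{s}$, $\Re\mathrm{s}=\sigma>0$) of the Lippmann--Schwinger equation (\ref{3.6}) and using the zero initial conditions, $\widehat{u_{\mathrm{t}\mathrm{t}}}=\mathrm{s}^2\hat u$ and the retarded kernel turns into $\rho_\mathrm{m}\,e^{-\mathrm{s}\mathrm{c}_0^{-1}|\mathrm{x}-\mathrm{y}|}/(4\pi|\mathrm{x}-\mathrm{y}|)$, so that $\hat u$ solves
\begin{align}\nonumber
 \hat u + \gamma\,\mathrm{s}^2\,\bm{\mathrm{V}}^\mathrm{s}_\Omega[\hat u] + \alpha\,\bm{\mathrm{S}}^\mathrm{s}_{\partial\Omega}\big[\partial_\nu\hat u\big] = \hat{u}^\mathrm{i},
\end{align}
with $\bm{\mathrm{V}}^\mathrm{s}_\Omega$ the Laplace-domain Newtonian (volume) potential on $\Omega$, $\bm{\mathrm{S}}^\mathrm{s}_{\partial\Omega}$ the associated single-layer operator, and $\partial_\nu\hat u$ closed through the interior relation coming from the transmission conditions. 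By the Plancherel identity on the line $\Re\mathrm{s}=\sigma$ in the weighted spaces $\mathrm{H}^\mathrm{r}_{0,\sigma}$ and the one-dimensional Sobolev embedding $\mathrm{H}^1(0,\mathrm{T})\hookrightarrow\mathrm{L}^\infty(0,\mathrm{T})$ in time, the desired bound $\sup_{\mathrm{t}\in[0,\mathrm{T}]}\|\partial_\mathrm{t}^\mathrm{k}u(\cdot,\mathrm{t})\|_{\mathrm{L}^2(\Omega)}\lesssim\delta^{3/2}$ will follow, for each fixed $\mathrm{k}$ within the time-regularity produced in Remark \ref{r2.1}, once I establish, uniformly for $\Re\mathrm{s}=\sigma>0$, an estimate of the form $\|\hat u(\cdot,\mathrm{s})\|_{\mathrm{L}^2(\Omega)}\lesssim\delta^{3/2}\,|\mathrm{s}|^{m}$, the polynomial growth in $|\mathrm{s}|$ being exactly what the $\mathcal{C}^9$-regularity of $\lambda$ is there to absorb.

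I would then rescale to $\mathrm{B}$ by writing $\mathrm{x}=\mathrm{z}+\delta\xi$, $\mathrm{y}=\mathrm{z}+\delta\eta$ and $\hat U(\xi):=\hat u(\mathrm{z}+\delta\xi)$. Since $\|\hat u\|_{\mathrm{L}^2(\Omega)}=\delta^{3/2}\|\hat U\|_{\mathrm{L}^2(\mathrm{B})}$ and $\hat U^\mathrm{i}(\xi):=\hat u^\mathrm{i}(\mathrm{z}+\delta\xi)$ is of order one near $\mathrm{z}$, the estimate reduces to bounding $\|\hat U\|_{\mathrm{L}^2(\mathrm{B})}$ by a polynomial in $|\mathrm{s}|$, uniformly in $\delta$. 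Tracking the $\delta$-powers ($d\mathrm{y}=\delta^3 d\eta$, $|\mathrm{x}-\mathrm{y}|^{-1}=\delta^{-1}|\xi-\eta|^{-1}$, $d\sigma_\mathrm{y}=\delta^2 d\sigma_\eta$, $\partial_\nu\hat u=\delta^{-1}\partial_\nu\hat U$) and the cancellation $\gamma=\mathcal{O}(1)$ against $\alpha=\mathcal{O}(\delta^{-2})$ forced by (\ref{cond-bubble}), the rescaled volume term carries the small factor $\gamma\mathrm{s}^2\delta^2$, while the surface term keeps the large factor $\alpha=\mathcal{O}(\delta^{-2})$ but acts on the interior normal derivative $\partial_\nu\hat U$, which is itself small because inside $\mathrm{B}$ the rescaled field is almost harmonic, $\Delta_\eta\hat U=\mathcal{O}(\delta^2\mathrm{s}^2)\hat U$. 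This is what makes a spectral splitting along the constant / grad-harmonic modes natural.

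The main obstacle is the uniform-in-$\delta$ invertibility of the rescaled Lippmann--Schwinger operator together with a polynomial-in-$|\mathrm{s}|$ control of its inverse. My plan is to split $\hat U$ into its resonant part (the constant / grad-harmonic mode) and the orthogonal complement. On the complement the interior Dirichlet-to-Neumann map is bounded below, so the large factor $\alpha=\mathcal{O}(\delta^{-2})$ multiplying the positive magnetization-type operator makes the corresponding block strongly coercive and forces that part of $\hat U$ to be of size $\mathcal{O}(\delta^2)$; here the spectral (positivity) properties of the single-layer and magnetization operators on $\nabla\mathbb{H}_{\text{arm}}$ are used exactly as for the single-layer analysis in \cite{sini2}. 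On the resonant mode the normal derivative is only of size $\mathcal{O}(\delta^2\mathrm{s}^2)$, so the large factor $\alpha$ and the value of the Newtonian operator on constants combine with it to leave a scalar equation whose denominator is $\mathrm{s}^2+\omega_\mathrm{M}^2$, with $\omega_\mathrm{M}=\sqrt{2\overline{\mathrm{k}_\mathrm{c}}/(\mathrm{A}_{\partial\mathrm{B}}\rho_\mathrm{m})}$ the Minnaert frequency, exactly as in \cite{Ahcene-bubbles}. On the line $\Re\mathrm{s}=\sigma>0$ this denominator never vanishes and obeys $|\mathrm{s}^2+\omega_\mathrm{M}^2|\gtrsim\sigma|\mathrm{s}|$ for large $|\mathrm{s}|$ (and is bounded below otherwise), so the resonance yields only a bounded amplification rather than a genuine singularity. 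The delicate point, which I expect to be the technical heart of the argument, is to make both the spectral splitting and this lower bound uniform in $\delta$, i.e.\ to carry the $\mathcal{O}(\delta^2\mathrm{s}^2)$ corrections through the spectral decomposition of the Newtonian and magnetization operators while keeping every constant independent of $\delta$ and the $\mathrm{s}$-growth polynomial.
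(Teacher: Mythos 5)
Your route is genuinely different from the paper's, and the difference matters. The paper never touches the Lippmann--Schwinger equation for this proposition: it Laplace-transforms the PDE (\ref{wellposed}) itself and runs a direct variational coercivity estimate, equations (\ref{vf})--(\ref{addition}). The large contrast enters with a \emph{favorable} sign: since $\mathrm{k}^{-1}\sim\rho^{-1}\sim\delta^{-2}$ inside $\Omega$, the coercive left-hand side dominates $\delta^{-2}\Vert (u^{\mathrm{s}})^{\ell}\Vert_{\mathrm{H}^1(\Omega)}\Vert (u^{\mathrm{s}})^{\ell}\Vert_{\mathrm{H}^1(\mathbb{R}^3)}$, while the source $\mathrm{F}^\ell$, being the contrast $\sim\delta^{-2}$ times a smooth incident field supported on a volume $\sim\delta^3$, pairs to only $\delta^{-1/2}\Vert\cdot\Vert_{\mathrm{H}^1(\mathbb{R}^3)}$; dividing gives $\Vert(u^{\mathrm{s}})^\ell\Vert_{\mathrm{H}^1(\Omega)}\lesssim\delta^{3/2}|\bm{\mathrm{s}}|/\min\{\sigma,\sigma^3\}$, and Parseval plus the embedding of $\mathrm{H}^{\mathrm{r}}_{0,\sigma}$ in time into continuous functions finishes, exactly as in your last step. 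No operator inversion, no spectral decomposition and no resonance appear, and uniformity in $\bm{\mathrm{s}}$ on the line $\Re\,\bm{\mathrm{s}}=\sigma$ is automatic.

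Your plan instead requires uniform-in-$\delta$, polynomial-in-$|\bm{\mathrm{s}}|$ invertibility of the rescaled Lippmann--Schwinger operator, and this step --- which you yourself flag as the technical heart --- is left unproven, so the proposal has a genuine gap at its center. Two concrete obstructions. First, your reduction treats the volume term $\gamma\bm{\mathrm{s}}^2\delta^2$ and the interior relation $\Delta_\eta\hat{U}=\mathcal{O}(\delta^2\bm{\mathrm{s}}^2)\hat{U}$ as perturbations; that is legitimate only when $\delta|\bm{\mathrm{s}}|\lesssim 1$. Plancherel forces you to control the whole line $\Re\,\bm{\mathrm{s}}=\sigma$, and for $|\bm{\mathrm{s}}|\gtrsim\delta^{-1}$ the rescaled kernel $e^{-\bm{\mathrm{s}}\delta\mathrm{c}_0^{-1}|\xi-\eta|}/(4\pi\delta|\xi-\eta|)$ is no longer close to the static one, so the splitting along harmonic/magnetization modes and the scalar resonant equation with denominator $\bm{\mathrm{s}}^2+\omega_{\mathrm{M}}^2$ cease to describe the operator; this high-frequency regime needs a separate argument, and it is precisely where the paper's energy method wins, coercivity being insensitive to $|\bm{\mathrm{s}}|$. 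Second, where the paper does deploy your machinery --- the decomposition $\mathbb{L}^{2}=\mathbb{H}_{0}(\text{div}\,0,\mathrm{B})\oplus\mathbb{H}_{0}(\text{curl}\,0,\mathrm{B})\oplus\nabla\mathbb{H}_{\text{arm}}$ and the eigenbasis of $\nabla\bm{\mathrm{M}}^{(0)}_{\mathrm{B}}$, in Proposition \ref{p3.1} --- the remainder operators $\bm{\mathrm{P}}_{\mathrm{B}}^{\delta}$ and $\bm{\mathrm{J}}_{\mathrm{B}}^{\delta}$ are bounded there using (\ref{esti2}) and (\ref{gradu}), i.e.\ the conclusion of Proposition \ref{p1}, as input. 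So in this paper the spectral-splitting argument is a bootstrap on top of the energy estimate, not a substitute for it; to make it a substitute you must close the inversion with every term treated as an unknown, uniformly in $\delta$ and in $|\bm{\mathrm{s}}|$, which is exactly the part your proposal defers.
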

\begin{proof}
See Section \ref{apri} for the proof.
\end{proof}
and
\begin{proposition}\label{p3}
We have the following estimate
\begin{align}
\Vert \partial_\mathrm{t}^\mathrm{k}\partial_\nu u(\cdot, \mathrm{t})\Vert_{\mathrm{H}^{-\frac{1}{2}}(\partial\Omega)} \sim \delta^2, \quad \mathrm{t}\in [0,\mathrm{T}], \quad \text{for}\quad \mathrm{k} =0,1,\ldots.
\end{align}
\end{proposition}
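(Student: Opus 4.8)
The plan is to reduce the boundary quantity $\partial_\nu u|_-$ to interior integrals of $u$ via the interior equation, and then transport everything to the fixed reference domain $\mathrm{B}$ through the dilation $\mathrm{y}=\delta\xi+\mathrm{z}$, where the powers of $\delta$ become explicit. The conceptual reason for the exponent $2$ is already visible in the transmission condition $\rho_\mathrm{m}^{-1}\partial_\nu u|_+=\rho_\mathrm{c}^{-1}\partial_\nu u|_-$: since $\rho_\mathrm{c}=\overline{\rho_\mathrm{c}}\delta^2$ one has $\partial_\nu u|_-=\frac{\overline{\rho_\mathrm{c}}}{\rho_\mathrm{m}}\delta^2\,\partial_\nu u|_+$, so the claim is equivalent to showing that the exterior co-normal trace is of order one in $\mathrm{H}^{-\frac12}(\partial\Omega)$. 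I will work with the interior trace directly. Inside $\Omega$ the field solves $\Delta u=\frac{\overline{\rho_\mathrm{c}}}{\overline{\mathrm{k}_\mathrm{c}}}u_{\mathrm{t}\mathrm{t}}=:a\,u_{\mathrm{t}\mathrm{t}}$ with $a\sim1$, because $\rho_\mathrm{c}/\mathrm{k}_\mathrm{c}=\overline{\rho_\mathrm{c}}/\overline{\mathrm{k}_\mathrm{c}}$ by (\ref{cond-bubble}).

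For the upper bound I will use the dual characterization (\ref{normd2}). Given $\varphi\in \mathrm{H}^{\frac12}(\partial\Omega)$, let $\Phi$ denote its harmonic extension to $\Omega$; Green's first identity together with the interior equation gives
$$
\langle\partial_\nu u|_-,\varphi\rangle_{\partial\Omega}=\int_\Omega\nabla u\cdot\nabla\Phi\,d\mathrm{y}+a\int_\Omega u_{\mathrm{t}\mathrm{t}}\,\Phi\,d\mathrm{y}.
$$
Rescaling both $u$ and $\Phi$ to $\mathrm{B}$, one has $\Vert \Phi\Vert_{\mathrm{L}^2(\Omega)}\sim\delta^{\frac32}\Vert\hat\Phi\Vert_{\mathrm{L}^2(\mathrm{B})}$ and $\Vert\nabla\Phi\Vert_{\mathrm{L}^2(\Omega)}\sim\delta^{\frac12}\Vert\nabla\hat\Phi\Vert_{\mathrm{L}^2(\mathrm{B})}$, while the fixed-domain extension bound $\Vert\hat\Phi\Vert_{\mathrm{H}^1(\mathrm{B})}\lesssim\Vert\hat\varphi\Vert_{\mathrm{H}^{\frac12}(\partial\mathrm{B})}$ combined with the scaling of the Slobodeckij norm (\ref{normd1}) yields $\Vert\hat\varphi\Vert_{\mathrm{H}^{\frac12}(\partial\mathrm{B})}\lesssim\delta^{-1}\Vert\varphi\Vert_{\mathrm{H}^{\frac12}(\partial\Omega)}$. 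Feeding these into the identity and invoking Proposition \ref{p1} (so that $\Vert u_{\mathrm{t}\mathrm{t}}\Vert_{\mathrm{L}^2(\Omega)}\lesssim\delta^{\frac32}$), the second term is bounded by $\delta^{2}\Vert\varphi\Vert_{\mathrm{H}^{\frac12}(\partial\Omega)}$ while the first is bounded by $\Vert\nabla u\Vert_{\mathrm{L}^2(\Omega)}\,\delta^{-\frac12}\Vert\varphi\Vert_{\mathrm{H}^{\frac12}(\partial\Omega)}$. Hence $\Vert\partial_\nu u|_-\Vert_{\mathrm{H}^{-\frac12}(\partial\Omega)}\lesssim\delta^2$ follows once the gradient estimate $\Vert\nabla u\Vert_{\mathrm{L}^2(\Omega)}\lesssim\delta^{\frac52}$ is available, i.e. once one knows that the rescaled interior field $\hat u$ is constant up to order $\delta^2$ in $\mathrm{H}^1(\mathrm{B})$.

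For the lower bound I will simply test against the constant $\varphi\equiv1$. By the divergence theorem and the interior equation, $\langle\partial_\nu u|_-,1\rangle_{\partial\Omega}=\int_\Omega\Delta u\,d\mathrm{y}=a\int_\Omega u_{\mathrm{t}\mathrm{t}}\,d\mathrm{y}$, whereas $\Vert1\Vert_{\mathrm{H}^{\frac12}(\partial\Omega)}=|\partial\Omega|^{\frac12}\sim\delta$. Since the resonant (monopole) part of the interior field is of order one, the spatial average $\langle u_{\mathrm{t}\mathrm{t}}\rangle_\Omega$ stays bounded below away from zero on the active time interval, so $\int_\Omega u_{\mathrm{t}\mathrm{t}}\,d\mathrm{y}\sim|\Omega|\sim\delta^3$ and the corresponding ratio is of order $\delta^2$. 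This is exactly the total flux that drives the leading term of (\ref{main-estimate}), which reassures that the exponent is sharp and that the matching lower bound is genuine rather than an artifact.

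Finally, the estimate for $\partial_\mathrm{t}^\mathrm{k}\partial_\nu u$ follows verbatim: the interior equation commutes with $\partial_\mathrm{t}$, so $\Delta(\partial_\mathrm{t}^\mathrm{k}u)=a\,\partial_\mathrm{t}^{\mathrm{k}+2}u$, and Proposition \ref{p1} holds for every order of time differentiation; the admissible range of $\mathrm{k}$ is the one guaranteed by the regularity of Remark \ref{r2.1}. I expect the genuine difficulty to be concentrated entirely in the two field inputs above, namely $\Vert\nabla u\Vert_{\mathrm{L}^2(\Omega)}\lesssim\delta^{\frac52}$ for the upper bound and the non-degeneracy $\int_\Omega u_{\mathrm{t}\mathrm{t}}\,d\mathrm{y}\gtrsim\delta^3$ for the lower bound. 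Both encode the Minnaert resonance and cannot be extracted from Proposition \ref{p1} alone, since a function with $\Delta\hat u=\mathcal{O}(\delta^2)$ and $\Vert\hat u\Vert_{\mathrm{L}^2(\mathrm{B})}\lesssim1$ may still carry an order-one gradient; they require the Laplace-domain a priori analysis of the rescaled Lippmann--Schwinger operator in Section \ref{apri}, where the density is controlled through the invertibility of the Newtonian and magnetization operators on $\partial\mathrm{B}$.
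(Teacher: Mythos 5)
Your reduction of the trace estimate is correct, and it takes a genuinely different route from the paper for the final step. Given the two interior inputs you isolate --- $\Vert \partial_\mathrm{t}^\mathrm{k} u(\cdot,\mathrm{t})\Vert_{\mathrm{L}^2(\Omega)}\lesssim \delta^{3/2}$ (Proposition \ref{p1}) and the improved gradient bound $\Vert \partial_\mathrm{t}^\mathrm{k}\nabla u(\cdot,\mathrm{t})\Vert_{\mathrm{L}^2(\Omega)}\lesssim \delta^{5/2}$ --- your duality argument with the harmonic extension $\Phi$ of the test function, combined with the scalings $\Vert\Phi\Vert_{\mathrm{L}^2(\Omega)}\sim\delta^{3/2}\Vert\hat\Phi\Vert_{\mathrm{L}^2(\mathrm{B})}$, $\Vert\nabla\Phi\Vert_{\mathrm{L}^2(\Omega)}\sim\delta^{1/2}\Vert\nabla\hat\Phi\Vert_{\mathrm{L}^2(\mathrm{B})}$ and $\Vert\hat\varphi\Vert_{\mathrm{H}^{1/2}(\partial\mathrm{B})}\lesssim\delta^{-1}\Vert\varphi\Vert_{\mathrm{H}^{1/2}(\partial\Omega)}$, delivers $\Vert\partial_\nu u(\cdot,\mathrm{t})\Vert_{\mathrm{H}^{-1/2}(\partial\Omega)}\lesssim\delta^2$ in one step, pointwise in time. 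The paper does something longer: after proving the same gradient bound (its Proposition \ref{p3.1}), it first obtains only the intermediate estimate $\delta^{3/2}$ in (\ref{p33}) via the $\mathbb{H}(\mathrm{div},\mathrm{B})$ trace inequality in the time-weighted spaces (where the time rescaling $\mathrm{T}_\delta=\mathrm{T}/\delta$ costs half a power of $\delta$), and then bootstraps to $\delta^2$ by returning to the Taylor-expanded retarded boundary integral equation (\ref{important}), inverting $\big(\tfrac1\alpha+\tfrac{\rho_\mathrm{m}}2\big)+\bm{\mathcal{K}}^*_{\textbf{Lap}}$, and estimating eleven separate terms $\mathrm{S}_1,\dots,\mathrm{S}_{11}$ with the dual operator bounds (\ref{bounds}). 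Your harmonic-extension argument sidesteps that entire bootstrap and the time-scaling loss, which is a real simplification.

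That said, there is a genuine gap, and you have named it yourself: the estimate $\Vert\nabla u(\cdot,\mathrm{t})\Vert_{\mathrm{L}^2(\Omega)}\lesssim\delta^{5/2}$ is not proven in your proposal, and it is precisely where all the work lies. In the paper this is Proposition \ref{p3.1}, proved by taking the gradient of the Lippmann--Schwinger equation, rescaling to $\mathrm{B}$, decomposing $\mathbb{L}^2(\mathrm{B})=\mathbb{H}_0(\mathrm{div}\,0,\mathrm{B})\oplus\mathbb{H}_0(\mathrm{curl}\,0,\mathrm{B})\oplus\nabla\mathbb{H}_{\mathrm{arm}}$, and using the eigenbasis of the magnetization operator together with the crucial cancellations $\langle\widehat{\nabla u^\mathrm{i}},\mathrm{e}^{(1)}_\mathrm{n}\rangle=0$ and the uniform bounds $\tfrac{\alpha}{1+\alpha\lambda_\mathrm{n}^{3}}\sim1$; your remark that the magnetization/Newtonian operator analysis is the needed machinery is accurate, but the proposal stops at the reduction. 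Without that input your argument only yields $\delta$ (using the cruder bound (\ref{gradu})), which is weaker even than the paper's intermediate $\delta^{3/2}$. Separately, your lower bound is heuristic: the non-degeneracy $\big|\int_\Omega u_{\mathrm{t}\mathrm{t}}\big|\gtrsim\delta^3$ is asserted from the resonance rather than proven, and it cannot hold for all $\mathrm{t}\in[0,\mathrm{T}]$ (e.g.\ before the incident wave reaches the bubble, or at zeros of the oscillation in (\ref{lamda})); to be fair, the paper's own proof also only establishes the upper bound and uses the symbol $\sim$ loosely, the sharpness being visible a posteriori from the leading term of (\ref{main-estimate}).
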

\begin{proof}
See Section \ref{apri} for the proof.
\end{proof}
\noindent
Next, we look at the Lippmann-Schwinger equation:
\begin{align}
    u(\mathrm{x},\mathrm{t}) + \gamma\bm{\mathrm{V}}_\Omega\Big[u_{\mathrm{t}\mathrm{t}}\Big](\mathrm{x},\mathrm{t})+ \alpha \bm{\mathcal{S}}_{\partial\Omega} \Big[\partial_\nu u\Big](\mathrm{x},\mathrm{t}) = u^\mathrm{i}\quad \text{in}\ \Omega.
\end{align}
The retarded single-layer potential operator $\bm{\mathcal{S}}_{\partial\Omega}$ is defined by
\begin{align}
    \bm{\mathcal{S}}_{\partial\Omega}\big[f\big](\mathrm{x},\mathrm{t}) := \int_{\partial \Omega} \frac{\rho_\mathrm{m}}{4\pi|\mathrm{x}-\mathrm{y}|}f(\mathrm{y},\mathrm{t}-\mathrm{c}_0^{-1}|\mathrm{x}-\mathrm{y}|)d\sigma_\mathrm{y},\quad \text{where} \quad (\mathrm{x},\mathrm{t}) \in \textcolor{black}{\Omega} \times(0,\mathrm{T}),
\end{align}
and we define the corresponding retarded volume potential $\bm{\mathrm{V}}_\Omega$ by
\begin{align}
    \bm{\mathrm{V}}_\Omega\big[f\big](\mathrm{x},\mathrm{t}) := \int_\Omega\frac{\rho_\mathrm{m} }{4\pi|\mathrm{x}-\mathrm{y}|}f(\mathrm{y},\mathrm{t}-\mathrm{c}_0^{-1}|\mathrm{x}-\mathrm{y}|)d\mathrm{y}, \quad \textcolor{black}{(\mathrm{x},\mathrm{t}) \in \Omega \times(0,\mathrm{T})}.
\end{align}
We have the jump relation $\partial_\nu^\pm\bm{\mathcal{S}}_{\partial\Omega}[f] = \mp\frac{\rho_\mathrm{m}}{2}+ \bm{\mathcal{K}}^\mathrm{t}[f].$ Consequently, taking the Neumann trace of the above equation we obtain 
\begin{align}\label{be1}
    \Big(1+\frac{\alpha\rho_\mathrm{m}}{2}\Big)\partial_\nu u + \gamma \partial_\nu\bm{\mathrm{V}}_\Omega\Big[u_{\mathrm{t}\mathrm{t}}\Big]+ \alpha \bm{\mathcal{K}}^\mathrm{t}\Big[\partial_\nu u\Big] = \partial_\nu u^\mathrm{i}\quad \textcolor{black}{\text{on}\; \partial\Omega},
\end{align}
where the adjoint double layer operator $\bm{\mathcal{K}}^\mathrm{t}$ is defined for $(\mathrm{x},\mathrm{t}) \in \partial\Omega \times(0,\mathrm{T})$ by
\begin{align}
    \bm{\mathcal{K}}^\mathrm{t}\Big[f\Big](\mathrm{x},\mathrm{t}) := -\rho_\mathrm{m}\Big[\mathrm{c}_0^{-1}\int_{\partial \Omega} \frac{f_\mathrm{t}(\mathrm{y}, \mathrm{t}- \mathrm{c}_0^{-1}|\mathrm{x}-\mathrm{y}|)}{4\pi|\mathrm{x}-\mathrm{y}|}\frac{(\mathrm{x}-\mathrm{y})\cdot\nu}{|\mathrm{x}-\mathrm{y}|}d\sigma_\mathrm{y} + \int_{\partial \Omega} \frac{f(\mathrm{y}, \mathrm{t}- \mathrm{c}_0^{-1}|\mathrm{x}-\mathrm{y}|)}{4\pi|\mathrm{x}-\mathrm{y}|}\frac{(\mathrm{x}-\mathrm{y})\cdot\nu}{|\mathrm{x}-\mathrm{y}|^2}d\sigma_\mathrm{y}\Big].
\end{align}
Then, from (\ref{be1}) and after taking the integration with respect to $\partial\Omega$ we obtain the following
\begin{align}\label{givefor}
(1+\frac{\alpha\rho_\mathrm{m}}{2})\int_{\partial\Omega} \partial_\nu u + \gamma \int_{\partial\Omega}\partial_\nu\bm{\mathrm{V}}_\Omega\Big[u_{\mathrm{t}\mathrm{t}}\Big]d\sigma_\mathrm{x} + \alpha \int_{\partial\Omega}  \bm{\mathcal{K}}^\mathrm{t}\Big[\partial_\nu u\Big] = \int_{\partial\Omega} \partial_\nu u^\mathrm{i}.
\end{align}
The term $\displaystyle\int_{\partial\Omega}  \bm{\mathcal{K}}^\mathrm{t}\Big[\partial_\nu u\Big]$ is expanded explicitly using Taylor's series expansion as follows:
\begin{align}
    \int_{\partial\Omega}  \bm{\mathcal{K}}^\mathrm{t}\Big[\partial_\nu u\Big] &=\nonumber\rho_\mathrm{m}\Bigg[ -\cancel{\mathrm{c}_0^{-1} \int_{\partial\Omega}(\partial_\nu u)_\mathrm{t}(\mathrm{y}, \mathrm{t}) \int_{\partial\Omega}\frac{(\mathrm{x}-\mathrm{y})\cdot\nu}{|\mathrm{x}-\mathrm{y}|^2}d\sigma_\mathrm{x}d\sigma_\mathrm{y}} + \mathrm{c}_0^{-2}\int_{\partial\Omega}(\partial_\nu u)_{\mathrm{t}\mathrm{t}}(\mathrm{y}, \mathrm{t}) \int_{\partial\Omega}\frac{(\mathrm{x}-\mathrm{y}).\nu}{|\mathrm{x}-\mathrm{y}|}d\sigma_\mathrm{x}d\sigma_\mathrm{y} 
    \\ \nonumber &- \mathrm{c}_0^{-3}\int_{\partial\Omega}(\partial_\nu u)_{\mathrm{t}\mathrm{t}\mathrm{t}}(\mathrm{y}, \mathrm{t}_1) \underbrace{\int_{\partial\Omega}(\mathrm{x}-\mathrm{y})\cdot\nu d\sigma_\mathrm{x}}_{\textcolor{black}{= 3|\Omega|}} d\sigma_\mathrm{y} - \int_{\partial\Omega}\partial_\nu u(\mathrm{y}, \mathrm{t}) \underbrace{\int_{\partial\Omega}\frac{(\mathrm{x}-\mathrm{y})\cdot\nu}{|\mathrm{x}-\mathrm{y}|^3}d\sigma_\mathrm{x}}_{\textcolor{black}{=\frac{1}{2}}}d\sigma_\mathrm{y}
    \\ \nonumber &+ \cancel{\mathrm{c}_0^{-1} \int_{\partial\Omega}(\partial_\nu u)_\mathrm{t}(\mathrm{y}, \mathrm{t}) \int_{\partial\Omega}\frac{(\mathrm{x}-\mathrm{y})\cdot\nu}{|\mathrm{x}-\mathrm{y}|^2}d\sigma_\mathrm{x}d\sigma_\mathrm{y}} - \frac{1}{2} \mathrm{c}_0^{-2}\int_{\partial\Omega}(\partial_\nu u)_{\mathrm{t}\mathrm{t}}(\mathrm{y}, \mathrm{t}) \int_{\partial\Omega}\frac{(\mathrm{x}-\mathrm{y})\cdot\nu}{|\mathrm{x}-\mathrm{y}|}d\sigma_\mathrm{x}d\sigma_\mathrm{y} 
    \\ &+ \frac{1}{3!}\mathrm{c}_0^{-3}\int_{\partial\Omega}(\partial_\nu u)_{\mathrm{t}\mathrm{t}\mathrm{t}}(\mathrm{y}, \mathrm{t}_2) \int_{\partial\Omega}(\mathrm{x}-\mathrm{y})\cdot\nu d\sigma_\mathrm{x}d\sigma_\mathrm{y}\Bigg].
\end{align}
where $\mathrm{t}_1, \mathrm{t}_2 \in \big(\mathrm{t}-\mathrm{c}^{-1}_0|\mathrm{x}-\mathrm{y}|,\mathrm{t}\big)$ and we denote $\mathrm{t}_\mathrm{m} := \max(\mathrm{t}_1, \mathrm{t}_2)$.
Hence, we obtain
\begin{align}\label{tdouble}
\int_{\partial\Omega}  \bm{\mathcal{K}}^\mathrm{t}\Big[\partial_\nu u\Big] =  \frac{\rho_\mathrm{m}}{2} \mathrm{c}_0^{-2}\int_{\partial\Omega}\partial_\mathrm{t}^2\partial_\nu u(\mathrm{y}, \mathrm{t}) \int_{\partial\Omega}\frac{(\mathrm{x}-\mathrm{y})\cdot\nu}{|\mathrm{x}-\mathrm{y}|}d\sigma_\mathrm{x}d\sigma_\mathrm{y} - \frac{\rho_\mathrm{m}}{2} \int_{\partial\Omega}\partial_\nu u + \mathcal{O}\big(\alpha\delta^4 \Vert \partial_\mathrm{t}^3\partial_\nu u(\cdot,\mathrm{t}_\mathrm{m})\Vert_{\mathrm{H}^{-\frac{1}{2}}(\partial\Omega)}\big).
\end{align}
We rewrite the first term of (\ref{tdouble}) as 
\begin{align}
    \int_{\partial\Omega}\partial_\mathrm{t}^2\partial_\nu u(\mathrm{y}, \mathrm{t}) \int_{\partial\Omega}\frac{(\mathrm{x}-\mathrm{y})\cdot\nu}{|\mathrm{x}-\mathrm{y}|}d\sigma_\mathrm{x}d\sigma_\mathrm{y} &= \nonumber \mathrm{A}_{\partial\Omega}\int_{\partial\Omega}\partial_\mathrm{t}^2\partial_\nu u(\mathrm{y}, \mathrm{t}) 
    + \int_{\partial\Omega}\partial_\mathrm{t}^2\partial_\nu u(\mathrm{y}, \mathrm{t})\Big[\int_{\partial\Omega}\frac{(\mathrm{x}-\mathrm{y})\cdot\nu}{|\mathrm{x}-\mathrm{y}|}d\sigma_\mathrm{x} - \mathrm{A}_{\partial\Omega}\Big], 
\end{align}
where
\begin{align}\label{av}
    \mathrm{A}_{\partial\Omega} = \frac{1}{|\partial \Omega|}\int_{\partial\Omega}\int_{\partial\Omega}\frac{(\mathrm{x}-\mathrm{y})\cdot\nu}{|\mathrm{x}-\mathrm{y}|}d\sigma_\mathrm{x}d\sigma_\mathrm{y} \sim \delta^2.
\end{align}
We then use (\ref{tdouble}) and (\ref{av}) to rewrite the equation (\ref{givefor}) as
\begin{align}\label{doubleest}
\int_{\partial\Omega} \partial_\nu u &+ \nonumber \frac{\alpha\rho_\mathrm{m}}{2}     \mathrm{A}_{\partial\Omega} \mathrm{c}_0^{-2}\int_{\partial\Omega}\partial_\mathrm{t}^2\partial_\nu u +  \gamma \int_{\partial\Omega}\partial_\nu\bm{\mathrm{V}}_\Omega\Big[u_{\mathrm{t}\mathrm{t}}\Big] d\sigma_\mathrm{x} =   \int_{\partial\Omega} \partial_\nu u^\mathrm{i} \\ &- \frac{\alpha\rho_\mathrm{m}}{2}\mathrm{c}_0^{-2}\int_{\partial\Omega}\partial_\mathrm{t}^2\partial_\nu u(\mathrm{y}, \mathrm{t}_\text{m})\Big[\int_{\partial\Omega}\frac{(\mathrm{x}-\mathrm{y})\cdot\nu}{|\mathrm{x}-\mathrm{y}|}d\sigma_\mathrm{x} - \mathrm{A}_{\partial\Omega}\Big]
+ \mathcal{O}\Big(\alpha\delta^4 \Vert \partial_\mathrm{t}^3\partial_\nu u(\cdot,\mathrm{t}_\mathrm{m})\Vert_{\mathrm{H}^{-\frac{1}{2}}(\partial\Omega)}\Big).
\end{align}
We set
\begin{align}
\textbf{err.}^{(1)} = \nonumber\frac{\alpha\rho_\mathrm{m}}{2}\mathrm{c}_0^{-2}\int_{\partial\Omega}\partial_\mathrm{t}^2\partial_\nu u(\mathrm{y}, \mathrm{t}_\text{m})\Big[\int_{\partial\Omega}\frac{(\mathrm{x}-\mathrm{y})\cdot\nu}{|\mathrm{x}-\mathrm{y}|}d\sigma_\mathrm{x} - \mathrm{A}_{\partial\Omega}\Big].
\end{align}
Again using Taylor's series expansion, we rewrite the expression $\partial_\nu\bm{\mathrm{V}}_\Omega\Big[u_{\mathrm{t}\mathrm{t}}\Big]$ as follows:
\begin{align}\label{tv}
\partial_\nu\bm{\mathrm{V}}_\Omega\Big[u_{\mathrm{t}\mathrm{t}}\Big] &:= \nonumber \partial_\nu\int_\Omega\frac{\rho_\mathrm{m}}{4\pi|\mathrm{x}-\mathrm{y}|}\partial_\mathrm{t}^2u(\mathrm{y},\mathrm{t}-\mathrm{c}^{-1}_0|\mathrm{x}-\mathrm{y}|)d\mathrm{y}
\\ &= \rho_\mathrm{m}\Bigg[\partial_\nu\int_\Omega\frac{1}{4\pi|\mathrm{x}-\mathrm{y}|}\partial_\mathrm{t}^2u(\mathrm{y},\mathrm{t})d\mathrm{y} - \underbrace{\partial_\nu\int_\Omega\frac{\partial_\mathrm{t}^{\textcolor{black}{3}}u(\mathrm{y},\mathrm{t})}{4\pi|\mathrm{x}-\mathrm{y}|}\mathrm{c}^{-1}_0|\mathrm{x}-\mathrm{y}|d\mathrm{y}}_{=0} + \partial_\nu\int_\Omega\frac{\partial_\mathrm{t}^{\textcolor{black}{4}}u(\mathrm{y},\mathrm{t}_3)}{4\pi|\mathrm{x}-\mathrm{y}|}\mathrm{c}^{-2}_0|\mathrm{x}-\mathrm{y}|^2d\mathrm{y}\Bigg],
\end{align}
where $\mathrm{t}_3 \in \big(\mathrm{t}-\mathrm{c}^{-1}_0|\mathrm{x}-\mathrm{y}|,\mathrm{t}\big)$.
\newline

\noindent
Using equation (\ref{tv}) and applying Green's identities to the equation (\ref{doubleest}), we get
\begin{align}
\int_{\partial\Omega} \partial_\nu u + \frac{\alpha\rho_\mathrm{m}}{2}     \mathrm{A}_{\partial\Omega} \mathrm{c}_0^{-2}\int_{\partial\Omega}(\partial_\nu u)_{\mathrm{t}\mathrm{t}} + \gamma\rho_\mathrm{m} \int_{\Omega}\Delta \bm{\mathcal{N}}_{\textbf{Lap},\Omega}\Big[\partial_\mathrm{t}^2u\Big] d\mathrm{x} &= \nonumber \nonumber \int_{\partial\Omega} \partial_\nu u^\mathrm{i} + \mathcal{O}\Big(\alpha\delta^4 \Vert \partial_\mathrm{t}^3\partial_\nu u(\cdot,\mathrm{t}_\mathrm{m})\Vert_{\mathrm{H}^{-\frac{1}{2}}(\partial\Omega)}\Big) \\&+ \textbf{err.}^{(1)} + \textbf{err.}^{(2)},
\end{align} 
where, $\displaystyle\bm{\mathcal{N}}_{\textbf{Lap},\Omega}\Big[\partial_\mathrm{t}^2u\Big]:=\int_\Omega\frac{1}{4\pi|\mathrm{x}-\mathrm{y}|}\partial_\mathrm{t}^2u(\mathrm{y},\mathrm{t})d\mathrm{y}$ is the Newtonian potential that corresponds to the Laplace equation and
\begin{align}
    \textbf{err.}^{(2)} := \mathrm{c}^{-2}_0 \gamma\rho_\mathrm{m}\int_{\partial\Omega} \partial_\nu \int_\Omega |\mathrm{x}-\mathrm{y}| \partial_\mathrm{t}^{\textcolor{black}{4}}u(\mathrm{y},\mathrm{t}_3) d\mathrm{y}d\sigma_{\mathrm{x}}.
\end{align}
As we have $\Delta \bm{\mathcal{N}}_{\textbf{Lap},\Omega}\mathrm{f}=-\mathrm{f}$, then
\begin{align}\label{tdouble1}
\int_{\partial\Omega} \partial_\nu u + \frac{\alpha\rho_\mathrm{m}}{2}     \mathrm{A}_{\partial\Omega} \mathrm{c}_0^{-2}\int_{\partial\Omega}(\partial_\nu u)_{\mathrm{t}\mathrm{t}} - \gamma\rho_\mathrm{m}\int_\Omega\partial_\mathrm{t}^2u &= \nonumber  \int_{\partial\Omega} \partial_\nu u^\mathrm{i} + \mathcal{O}\Big(\alpha\delta^4 \Vert \partial_\mathrm{t}^3\partial_\nu u(\cdot,\mathrm{t}_\mathrm{m})\Vert_{\mathrm{H}^{-\frac{1}{2}}(\partial\Omega)}\Big) \\&+ \textbf{err.}^{(1)} + \textbf{err.}^{(2)}.  
\end{align}    
Before moving on, we note that we will be using the estimate of Proposition \ref{p3} i.e. 
\begin{align}
\Vert \partial_\mathrm{t}^\mathrm{k}\partial_\nu u(\cdot, \mathrm{t})\Vert_{\mathrm{H}^{-\frac{1}{2}}(\partial\Omega)} \sim \delta^2 \quad \text{for}\quad \mathrm{k} =0,1,...
\end{align}
We denote $\displaystyle \mathrm{A}(\mathrm{y})= \int_{\partial\Omega}\frac{(\mathrm{x}-\mathrm{y})\cdot\nu}{|\mathrm{x}-\mathrm{y}|}d\sigma_\mathrm{x}.$ Then, due to the equation's time regularity, we can estimate $\textbf{err.}^{(1)}$ by utilizing the same equation as developed in Section \ref{apri}, in equation (\ref{important}) 
\begin{align}
   \nonumber\Big((\frac{1}{\alpha}+\frac{{\rho_\mathrm{m}}}{2})+ \bm{\mathcal{K}}^*_\textbf{Lap}\Big)\Big[\partial_\mathrm{t}^2\partial_\nu u\Big]
    &= \frac{1}{\alpha}\partial_\nu u^\mathrm{i} 
    - \rho_\mathrm{m}\Bigg[\frac{1}{2} \mathrm{c}^{-2}_0\int_{\partial\Omega}\partial_\mathrm{t}^4\partial_\nu u(\mathrm{y}, \mathrm{t}) \frac{(\mathrm{x}-\mathrm{y})\cdot\nu_\mathrm{x}}{|\mathrm{x}-\mathrm{y}|}d\sigma_\mathrm{y}
    \\ \nonumber&+ \frac{5}{6}\mathrm{c}^{-3}_0\int_{\partial\Omega}\partial_\mathrm{t}^5\partial_\nu u(\mathrm{y}, \mathrm{t}_\mathrm{m} ) (\mathrm{x}-\mathrm{y})\cdot\nu_\mathrm{x} d\sigma_\mathrm{y}
    \\ &+ \frac{\gamma}{\alpha}\partial_\nu\int_\Omega\frac{1}{4\pi|\mathrm{x}-\mathrm{y}|}\partial_\mathrm{t}^4u(\mathrm{y},\mathrm{t})d\mathrm{y} + \mathrm{c}^{-2}_0\frac{\gamma}{\alpha} \partial_\nu\int_\Omega|\mathrm{x}-\mathrm{y}|\partial_\mathrm{t}^6u(\mathrm{y},\mathrm{t}_3)d\mathrm{y}\Bigg].
\end{align}
We \textcolor{black}{first} set
\begin{align}
    &\nonumber\mathrm{a}_1:= \frac{1}{\alpha}\partial_\mathrm{t}^2\partial_\nu u^\mathrm{i},\quad \mathrm{a}_2 := \int_{\partial\Omega}\partial_\mathrm{t}^4\partial_\nu u(\mathrm{y},\mathrm{t}) \frac{(\mathrm{x}-\mathrm{y})\cdot\nu_\mathrm{x}}{|\mathrm{x}-\mathrm{y}|}d\sigma_\mathrm{y}, \quad \mathrm{a}_3:= \displaystyle\frac{5}{6}\mathrm{c}^{-3}_0 \int_{\partial\Omega}(\mathrm{x}-\mathrm{y})\cdot\nu_\mathrm{x}\partial_\mathrm{t}^5\partial_\nu u(\mathrm{y},\mathrm{t}_\mathrm{m}) d\sigma_\mathrm{y}
    \\ \nonumber &\mathrm{a}_4 := \frac{\gamma}{\alpha} \partial_\nu\int_\Omega\frac{1}{4\pi|\mathrm{x}-\mathrm{y}|}\partial_\mathrm{t}^4u(\mathrm{y},\mathrm{t})d\mathrm{y} \quad \text{and} \quad \mathrm{a}_5:= \mathrm{c}^{-2}_0\frac{\gamma}{\alpha} \partial_\nu\int_\Omega|\mathrm{x}-\mathrm{y}|\partial_\mathrm{t}^6u(\mathrm{y},\mathrm{t}_3)d\mathrm{y}.
\end{align}
\textcolor{black}{Here, $\bm{\mathcal{K}}_\textbf{Lap}$ denotes the double layer operator that corresponds to the Laplace equation, defined as
$\displaystyle\bm{\bm{\mathcal{K}}}_\textbf{Lap}\big[f\big](\mathrm{x}):= \rho_\mathrm{m}\int_{\partial\Omega} \frac{(\mathrm{x}-\mathrm{y})\cdot\nu_\mathrm{y}}{|\mathrm{x}-\mathrm{y}|^3}f(\mathrm{y})d\sigma_\mathrm{y}$ and $\bm{\bm{\mathcal{K}}}^*_\textbf{Lap}$ is its adjoint.} 
\newline

\noindent
We then arrive at the following estimate:
\begin{align}\label{er1}
\textbf{err.}^{(1)} &:=\Big|\nonumber\frac{\alpha\rho_\mathrm{m}}{2}\mathrm{c}_0^{-2}\int_{\partial\Omega}\partial_\mathrm{t}^2\partial_\nu u(\mathrm{y}, \mathrm{t}_\text{m})\Big[\int_{\partial\Omega}\frac{(\mathrm{x}-\mathrm{y})\cdot\nu}{|\mathrm{x}-\mathrm{y}|}d\sigma_\mathrm{x} - \mathrm{A}_{\partial\Omega}\Big]\Big|
\\ \nonumber &\lesssim \alpha \int_{\partial\Omega}(\frac{1}{\alpha}+\frac{\rho_\mathrm{m}}{2} +\bm{\bm{\mathcal{K}}}_\textbf{Lap} )^{-1}\Big)[\mathrm{A}(\mathrm{y})-\mathrm{A}_{\partial\Omega}](\mathrm{a}_1 + \mathrm{a}_2 + \mathrm{a}_3 + \mathrm{a}_4 + \mathrm{a}_5)d\sigma_\mathrm{y}
\\ \nonumber &\lesssim \alpha \Big\Vert (\frac{1}{\alpha}+\frac{\rho_\mathrm{m}}{2} +\bm{\mathcal{K}}_\textbf{Lap} )^{-1}[\mathrm{A}(\cdot)-\mathrm{A}_{\partial\Omega}] \Big\Vert_{\mathrm{H}^{\frac{1}{2}}_0(\partial\Omega)} \Vert \mathrm{a}_1 + \mathrm{a}_2 + \mathrm{a}_3 + \mathrm{a}_4 + \mathrm{a}_5 \Vert_{\mathrm{H}^{-\frac{1}{2}}_0(\partial\Omega)}
\\ &= \alpha \delta^3 \Vert \mathrm{a}_1 + \mathrm{a}_2 + \mathrm{a}_3 + \mathrm{a}_4 + \mathrm{a}_5 \Vert_{\mathrm{H}^{-\frac{1}{2}}_0(\partial\Omega)}.
\end{align}
\begin{proposition}\label{propo}
We have the estimate 
\begin{align}
    \Vert \mathrm{a}_1 + \mathrm{a}_2 + \mathrm{a}_3 + \mathrm{a}_4 + \mathrm{a}_5 \Vert_{\mathrm{H}^{-\frac{1}{2}}_0(\partial\Omega)} = \mathcal{O}(\delta^3).
\end{align}
\end{proposition}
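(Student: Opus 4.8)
The plan is to estimate each of the five functions $\mathrm{a}_1,\ldots,\mathrm{a}_5$ separately in $\mathrm{H}^{-\frac12}_0(\partial\Omega)$ and to verify that every one of them is $\mathcal{O}(\delta^3)$, the slowest-decaying contribution coming from $\mathrm{a}_1$. The whole argument rests on three inputs. (i) The coefficient scales forced by \eqref{cond-bubble}: one has $\alpha=\rho_\mathrm{c}^{-1}-\rho_\mathrm{m}^{-1}\sim\delta^{-2}$, while in $\gamma=\mathrm{k}_\mathrm{c}^{-1}-\mathrm{k}_\mathrm{m}^{-1}-(\rho_\mathrm{c}^{-1}-\rho_\mathrm{m}^{-1})\tfrac{\rho_\mathrm{c}}{\mathrm{k}_\mathrm{c}}$ the two $\delta^{-2}$ contributions cancel, leaving the order-one quantity $\gamma=-\mathrm{k}_\mathrm{m}^{-1}+\rho_\mathrm{m}^{-1}\tfrac{\overline{\rho_\mathrm{c}}}{\overline{\mathrm{k}_\mathrm{c}}}$; hence $\tfrac1\alpha\sim\delta^2$, $\tfrac{\gamma}{\alpha}\sim\delta^2$ and $\mathrm{c}_0\sim1$. (ii) The a priori bounds already stated, $\|\partial_\mathrm{t}^\mathrm{k}u(\cdot,\mathrm{t})\|_{\mathrm{L}^2(\Omega)}\lesssim\delta^{3/2}$ from Proposition \ref{p1} and $\|\partial_\mathrm{t}^\mathrm{k}\partial_\nu u(\cdot,\mathrm{t})\|_{\mathrm{H}^{-\frac12}(\partial\Omega)}\sim\delta^2$ from Proposition \ref{p3}, valid for every order $\mathrm{k}$ and uniformly in $\mathrm{t}\in[0,\mathrm{T}]$; this uniformity is what lets us handle the intermediate times $\mathrm{t}_\mathrm{m},\mathrm{t}_3$, and it is also the origin of the $\mathcal{C}^9$-regularity requirement on $\lambda$. (iii) Scaling lemmas on $\partial\Omega=\delta\partial\mathrm{B}+\mathrm{z}$ relating $\|\cdot\|_{\mathrm{H}^{\pm\frac12}(\partial\Omega)}$ and $\|\cdot\|_{\mathrm{L}^2(\Omega)}$ to the fixed reference norms on $\mathrm{B}$ through explicit powers of $\delta$.

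Concretely, I would set up the dilation $\mathrm{x}=\mathrm{z}+\delta\xi$ and record, for $\hat g(\xi):=g(\mathrm{z}+\delta\xi)$, the relations $\|g\|_{\mathrm{L}^2(\Omega)}=\delta^{3/2}\|\hat g\|_{\mathrm{L}^2(\mathrm{B})}$ and $d\sigma\sim\delta^2 d\sigma_\xi$, together with the non-homogeneous bounds $\|g\|_{\mathrm{H}^{-\frac12}(\partial\Omega)}\lesssim\delta\,\|\hat g\|_{\mathrm{H}^{-\frac12}(\partial\mathrm{B})}$ and $\|\hat g\|_{\mathrm{H}^{-\frac12}(\partial\mathrm{B})}\lesssim\delta^{-3/2}\|g\|_{\mathrm{H}^{-\frac12}(\partial\Omega)}$, both read off from the Slobodeckij norm \eqref{normd1}. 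Since the pairing in \eqref{er1} is taken against the mean-zero function $\mathrm{A}(\cdot)-\mathrm{A}_{\partial\Omega}$, only the mean-zero projections of the $\mathrm{a}_i$ contribute, so it is enough to bound their full $\mathrm{H}^{-\frac12}(\partial\Omega)$ norms (the mean-zero projection being uniformly bounded in $\delta$).

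I would then treat the terms in three groups. The term $\mathrm{a}_1=\tfrac1\alpha\partial_\mathrm{t}^2\partial_\nu u^\mathrm{i}$ is structurally trivial but decays slowest: as $u^\mathrm{i}$ is smooth near $\Omega$ (the source $\mathrm{x}_0$ lying away), $\partial_\mathrm{t}^2\partial_\nu u^\mathrm{i}$ is an order-one smooth trace whose $\mathrm{H}^{-\frac12}(\partial\Omega)$ norm is $\lesssim\delta$ by the scaling lemma, so $\|\mathrm{a}_1\|_{\mathrm{H}^{-\frac12}_0(\partial\Omega)}\lesssim\delta^{2}\cdot\delta=\delta^3$ (in fact $\delta^4$ once the mean-zero projection is used, the smooth trace varying only by $O(\delta)$ across $\partial\Omega$). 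The boundary-integral terms $\mathrm{a}_2,\mathrm{a}_3$ are operators acting on $\partial_\mathrm{t}^\mathrm{k}\partial_\nu u$ with kernels $\tfrac{(\mathrm{x}-\mathrm{y})\cdot\nu_\mathrm{x}}{|\mathrm{x}-\mathrm{y}|}$ and $(\mathrm{x}-\mathrm{y})\cdot\nu_\mathrm{x}$; after rescaling they become fixed operators on $\partial\mathrm{B}$ (the first of degree $0$, the second carrying an extra factor $\delta$), bounded on $\mathrm{H}^{-\frac12}(\partial\mathrm{B})$, and inserting Proposition \ref{p3} gives contributions well below $\delta^3$. Finally $\mathrm{a}_4,\mathrm{a}_5$ are normal traces on $\partial\Omega$ of volume potentials of $\partial_\mathrm{t}^\mathrm{k}u$: here the Newtonian potential $\bm{\mathcal{N}}_{\textbf{Lap},\Omega}$ rescales with a factor $\delta^2$ and its normal derivative with a factor $\delta$, while $\|\widehat{\partial_\mathrm{t}^4 u}\|_{\mathrm{L}^2(\mathrm{B})}=\delta^{-3/2}\|\partial_\mathrm{t}^4 u\|_{\mathrm{L}^2(\Omega)}\lesssim1$ by Proposition \ref{p1}; combining the reference elliptic regularity $\mathrm{L}^2(\mathrm{B})\to\mathrm{H}^2(\mathrm{B})$ with the trace theorem and the scaling lemma yields $\|\partial_\nu\bm{\mathcal{N}}_{\textbf{Lap},\Omega}[\partial_\mathrm{t}^4 u]\|_{\mathrm{H}^{-\frac12}(\partial\Omega)}\lesssim\delta^2$, so that $\|\mathrm{a}_4\|\lesssim\tfrac{\gamma}{\alpha}\,\delta^2\sim\delta^4$, and $\mathrm{a}_5$ is smaller still because of the extra factor $|\mathrm{x}-\mathrm{y}|\sim\delta$ in its kernel. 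Collecting the three groups, every term is $\lesssim\delta^3$, which gives the claim.

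The main obstacle is the bookkeeping of $\delta$-powers through the fractional-order norms. The spaces $\mathrm{H}^{\pm\frac12}(\partial\Omega)$ do not scale homogeneously under $\mathrm{x}=\mathrm{z}+\delta\xi$, because the $\mathrm{L}^2$ and Slobodeckij parts of \eqref{normd1} carry different powers of $\delta$; one must therefore use the scaling relations consistently as one-sided bounds and keep track of where the mean-zero projection earns an extra $\delta$. The single most delicate estimate is the one for $\mathrm{a}_4,\mathrm{a}_5$, where the normal trace of the gradient of the Newtonian volume potential must be controlled in $\mathrm{H}^{-\frac12}(\partial\Omega)$ with the correct power of $\delta$: this requires combining the reference-domain regularity of $\bm{\mathcal{N}}_{\textbf{Lap},\Omega}$, the trace inequality, and the volume rescaling $\|\hat f\|_{\mathrm{L}^2(\mathrm{B})}=\delta^{-3/2}\|f\|_{\mathrm{L}^2(\Omega)}$ so that Proposition \ref{p1} can be applied at unit scale.
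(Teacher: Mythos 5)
Your proposal is correct and follows essentially the same route as the paper's proof: a term-by-term estimation of $\mathrm{a}_1,\ldots,\mathrm{a}_5$ based on the coefficient scalings $1/\alpha\sim\gamma/\alpha\sim\delta^2$, the a priori bounds of Propositions \ref{p1} and \ref{p3}, rescaling to the reference domain $\mathrm{B}$, and, for $\mathrm{a}_4,\mathrm{a}_5$, the $\mathrm{L}^2(\mathrm{B})\to\mathrm{H}^2(\mathrm{B})$ regularity of the Newtonian potential combined with the trace estimate. The orders you obtain — $\mathrm{a}_1\sim\delta^3$ dominating, $\mathrm{a}_2,\mathrm{a}_3$ strictly smaller, $\mathrm{a}_4\sim\delta^4$, $\mathrm{a}_5$ smaller still — agree with the paper's estimates (\ref{est9}), (\ref{est2}), (\ref{est45}) and (\ref{est451}), and your packaging of $\mathrm{a}_2,\mathrm{a}_3$ as rescaled boundary operators bounded on $\mathrm{H}^{-\frac{1}{2}}(\partial\mathrm{B})$ is only a cosmetic variant of the paper's duality-pairing computation against the kernel norm (\ref{es6}).
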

\begin{proof}
See Section \ref{appen} for the proof.
\end{proof}
\noindent
Consequently, we obtain 
\begin{align}\label{er11}
    \textbf{err.}^{(1)} = \mathcal{O}(\delta^4).
\end{align}
Using Divergence Theorem, we observe that the integral $\displaystyle\int_{\partial\Omega}\partial_\nu\int_\Omega|\mathrm{x}-\mathrm{y}|\partial_\mathrm{t}^{\textcolor{black}{4}}u(\mathrm{y},\mathrm{t}_3)d\mathrm{y}d\sigma_\mathrm{x}$ behave as Newtonian Potential and then using Cauchy-Schwartz's inequality, we obtain
\begin{align}\label{er2}
    \textbf{err.}^{(2)}:= \Big|\int_{\partial\Omega}\partial_\nu\int_\Omega|\mathrm{x}-\mathrm{y}|\partial_\mathrm{t}^{\textcolor{black}{4}}u(\mathrm{y},\mathrm{t}_3)d\mathrm{y}d\sigma_\mathrm{x}\Big| \sim \delta^5.
\end{align}
Consequently, we use the equation $\frac{\rho_\mathrm{c}}{\mathrm{k}_\mathrm{c}}u_{\mathrm{t}\mathrm{t}}-\Delta u = 0$, estimates (\ref{er11}), (\ref{er2}), and Proposition \ref{p3} to obtain from equation (\ref{tdouble1})
\begin{align}
\int_{\partial\Omega} \partial_\nu u + \frac{\alpha\rho_\mathrm{m}}{2}     \mathrm{A}_{\partial\Omega} \mathrm{c}_0^{-2}\int_{\partial\Omega}(\partial_\nu u)_{\mathrm{t}\mathrm{t}} - \gamma\frac{\mathrm{k}_\mathrm{c}\rho_\mathrm{m}}{\rho_\mathrm{c}} \int_\Omega \Delta u&= \nonumber \int_{\partial\Omega} \partial_\nu u^\mathrm{i} + \mathcal{O}(\delta^4).
\end{align}     
Therefore, using integration by parts, we draw the following conclusion
\begin{align}\label{number}
\Big(1 - \gamma\frac{\mathrm{k}_\mathrm{c}\rho_\mathrm{m}}{\rho_\mathrm{c}}\Big)\int_{\partial\Omega} \partial_\nu u + \frac{\alpha\rho_\mathrm{m}}{2} \mathrm{A}_{\partial\Omega} \mathrm{c}_0^{-2}\int_{\partial\Omega}(\partial_\nu u)_{\mathrm{t}\mathrm{t}} =\int_{\partial\Omega} \partial_\nu u^\mathrm{i} + \mathcal{O}(\delta^4).
\end{align}
\textcolor{black}{Recall that the coefficients $\rho$ and $\mathrm{k}$ are piece-wise constants, with one constant outside $\Omega$, i.e. $\rho(\mathrm{x})\equiv \rho_\mathrm{m},\; \mathrm{k}(\mathrm{x}) \equiv \mathrm{k}_\mathrm{m}$ and other constants $\rho_\mathrm{c}$ and $\mathrm{k}_\mathrm{c}$ in $\Omega$ satisfying the following scaling properties
\begin{align}\label{scaling}
    \rho_\mathrm{c} = \overline{\rho_\mathrm{c}}\delta^2, \quad \mathrm{k}_\mathrm{c} = \overline{\mathrm{k}_\mathrm{c}}\delta^2 \quad \text{and} \quad \frac{\rho_\mathrm{c}}{\mathrm{k}_\mathrm{c}} \sim 1 \ \text{as}\ \delta\ll1.
\end{align}
Therefore, after a short hand-calculation, we get $1 - \gamma\frac{\mathrm{k}_\mathrm{c}\rho_\mathrm{m}}{\rho_\mathrm{c}} = \frac{\mathrm{k}_\mathrm{c}}{\rho_\mathrm{c}}\frac{1}{\mathrm{c}_0^2}$, where $\gamma:=\beta-\alpha \frac{\rho_\mathrm{c}}{\mathrm{k}_\mathrm{c}}$ with $\alpha:=\frac{1}{\rho_\mathrm{c}}-\frac{1}{\rho_\mathrm{m}}$ and $\beta:= \frac{1}{\mathrm{k}_\mathrm{c}}-\frac{1}{\mathrm{k}_\mathrm{m}}$ .}
\newline

\noindent
Then, we denote by $\displaystyle\mathrm{Y}(\mathrm{t}):= \int_{\partial\Omega} \partial_\nu u$ and rewrite (\ref{number}) as follows:
\begin{align}
\begin{cases}
   \textcolor{black}{ \frac{\alpha\rho_\mathrm{m}}{2}\frac{\rho_\mathrm{c}}{\mathrm{k}_\mathrm{c}}\mathrm{A}_{\partial\Omega}} \frac{\mathrm{d}^2}{\mathrm{d}\mathrm{t}^2} \mathrm{Y}(\mathrm{t}) + \mathrm{Y}(\mathrm{t}) &= \displaystyle\textcolor{black}{\frac{\rho_\mathrm{c}}{\mathrm{k}_\mathrm{c}}\mathrm{c}_0^2}\int_{\partial\Omega} \partial_\nu u^\mathrm{i} + \mathcal{O}(\delta^4), \quad \text{in} \quad (0,\mathrm{T}),
    \\ \mathrm{Y}(\mathrm{t}) = \frac{\mathrm{d}}{\mathrm{d}\mathrm{t}}\mathrm{Y}(\mathrm{t}) = 0.
\end{cases}
\end{align}
Therefore, the solution of the problem is given by
\begin{align}\label{lamda}
\mathrm{Y}(\mathrm{t}) = \mathrm{p}^{-\frac{1}{2}}\int_0^\mathrm{t}\sin \Big( \mathrm{p}^{-\frac{1}{2}}(\mathrm{t}-\mathrm{\tau})\Big) \mathrm{g}(\tau) d\tau,
\end{align}
where $\mathrm{p} := \textcolor{black}{ \frac{\alpha\rho_\mathrm{m}}{2}\frac{\rho_\mathrm{c}}{\mathrm{k}_\mathrm{c}}\mathrm{A}_{\partial\Omega}}$ and $\mathrm{g}(\mathrm{t}) := \textcolor{black}{\frac{\rho_\mathrm{c}}{\mathrm{k}_\mathrm{c}}\mathrm{c}_0^2}\displaystyle\int_{\partial\Omega} \partial_\nu u^\mathrm{i} + \mathcal{O}(\delta^4).$
\newline

\noindent
Next, we estimate the second term of (\ref{finalestimate}), which is represented by the following expression
\begin{align}
    \displaystyle\frac{\alpha\rho_\mathrm{m}}{4\pi}\int_{\partial\Omega}\Big(\frac{1}{|\mathrm{x}-\mathrm{y}|}-\frac{1}{|\partial\Omega|}\int_{\partial\Omega}\frac{1}{|\mathrm{x}-\mathrm{y}|}\Big) \partial_\nu u (\mathrm{y},\mathrm{t}-\mathrm{c}_0^{-1}|\mathrm{x}-\mathrm{y}|) d\sigma_\mathrm{y}.
\end{align}
We first denote by
\begin{align}
    \mathrm{f}_0 := \frac{1}{|\mathrm{x}-\mathrm{y}|}-\frac{1}{|\partial\Omega|}\int_{\partial\Omega}\frac{1}{|\mathrm{x}-\mathrm{y}|}d\sigma_\mathrm{y}, \quad \text{where}\ \mathrm{x}\in \mathbb{R}^3\setminus\overline{\Omega}.
\end{align}
Then, we take into consideration the equation that is developed in Section \ref{apri}, in equation (\ref{important}) 
\begin{align}\label{Taylor}
   &\nonumber(\frac{1}{\alpha}+\frac{\rho_\mathrm{m}}{2})\partial_\nu u(\mathrm{v},\mathrm{t}) - \rho_\mathrm{m}\int_{\partial\Omega}\partial_\nu u(\mathrm{y}, \mathrm{t}) \frac{(\mathrm{v}-\mathrm{y})\cdot\nu_\mathrm{v}}{|\mathrm{v}-\mathrm{y}|^3}d\sigma_\mathrm{y} 
    = \frac{1}{\alpha}\partial_\nu u^\mathrm{i} 
    - \rho_\mathrm{m}\Bigg[\frac{1}{2} \mathrm{c}^{-2}_0\int_{\partial\Omega}\partial_\mathrm{t}^2\partial_\nu u(\mathrm{y}, \mathrm{t}) \frac{(\mathrm{v}-\mathrm{y})\cdot\nu_\mathrm{v}}{|\mathrm{v}-\mathrm{y}|}d\sigma_\mathrm{y}
    \\ &+ \frac{5}{6}\mathrm{c}^{-3}_0\int_{\partial\Omega}\partial_\mathrm{t}^3\partial_\nu u(\mathrm{y}, \mathrm{t}_\mathrm{m} ) (\mathrm{v}-\mathrm{y})\cdot\nu_\mathrm{v} d\sigma_\mathrm{y}
   + \frac{\gamma}{\alpha}\partial_\nu\int_\Omega\frac{\partial_\mathrm{t}^2u(\mathrm{y},\mathrm{t})}{4\pi|\mathrm{v}-\mathrm{y}|}d\mathrm{y} + \mathrm{c}^{-2}_0\frac{\gamma}{\alpha} \partial_\nu\int_\Omega|\mathrm{v}-\mathrm{y}|\partial_\mathrm{t}^4u(\mathrm{y},\mathrm{t}_3)d\mathrm{y}\Bigg],
\end{align}
multiply it by $\mathrm{f}_0$ and integrate over $\partial\Omega$ to obtain
\begin{align}
    &\int_{\partial\Omega} \mathrm{f}_0 \partial_\nu  u(\mathrm{v},\mathrm{t})d\sigma_\mathrm{v} = \nonumber\frac{1}{\alpha} \int_{\partial\Omega} \mathrm{f}_0\bm{\mathrm{T}}^* \partial_\nu u^\mathrm{i}(\mathrm{v},\mathrm{t})d\sigma_\mathrm{v} - \frac{\rho_\mathrm{m}}{2} \mathrm{c}^{-2}_0 \int_{\partial\Omega} \mathrm{f}_0\bm{\mathrm{T}}^*\int_{\partial\Omega}\partial_\mathrm{t}^2\partial_\nu u(\mathrm{y}, \mathrm{t}) \frac{(\mathrm{v}-\mathrm{y})\cdot\nu_\mathrm{v}}{|\mathrm{v}-\mathrm{y}|}d\sigma_\mathrm{y}d\sigma_\mathrm{v}
    \\ \nonumber &- \frac{5\rho_\mathrm{m}}{6}\mathrm{c}^{-3}_0 \int_{\partial\Omega} \mathrm{f}_0\bm{\mathrm{T}}^*\int_{\partial\Omega}\partial_\mathrm{t}^3\partial_\nu u(\mathrm{y}, \mathrm{t}_\mathrm{m} ) (\mathrm{v}-\mathrm{y})\cdot\nu_\mathrm{v} d\sigma_\mathrm{y}d\sigma_\mathrm{v}
    \\ &- \frac{\gamma\rho_\mathrm{m}}{\alpha} \Big[\int_{\partial\Omega} \mathrm{f}_0\bm{\mathrm{T}}^* \partial_\nu\int_\Omega\frac{1}{4\pi|\mathrm{v}-\mathrm{y}|}\partial_\mathrm{t}^2u(\mathrm{y},\mathrm{t})d\mathrm{y}d\sigma_\mathrm{v}
    - \mathrm{c}^{-2}_0\int_{\partial\Omega} \mathrm{f}_0\bm{\mathrm{T}}^* \partial_\nu\int_\Omega|\mathrm{v}-\mathrm{y}|\partial_\mathrm{t}^4u(\mathrm{y},\mathrm{t}_3)d\mathrm{y}d\sigma_\mathrm{v}\Big],
\end{align}
where we set \textcolor{black}{$\bm{\mathrm{T}} := \Big[\big(\frac{1}{\alpha}+\frac{\rho_\mathrm{m}}{2}\big) +\bm{\mathcal{K}}_\textbf{Lap} \Big]^{-1}$ and } $\bm{\mathrm{T}}^*  := \Big[\big(\frac{1}{\alpha}+\frac{\rho_\mathrm{m}}{2}\big) +\bm{\mathcal{K}}^*_\textbf{Lap} \Big]^{-1}.$ 
\newline

\noindent
As $\mathrm{x}\in \mathbb{R}^3\setminus\overline{\Omega}$ and $\mathrm{y}\in \partial\Omega$, we do the following estimate with Taylor's series expansion for $\mathrm{z}\in \Omega$ with $|\mathrm{x}-\mathrm{z}|\sim \delta + \delta^\mathrm{q}$ and by triangle inequality we obtain
\begin{align}
    \Vert \mathrm{f}_0 \Vert_{\mathrm{L}_0^2(\partial\Omega)}^2 &\lesssim\nonumber \Big\Vert \frac{1}{|\mathrm{x}-\cdot|}\Big\Vert^2_{\mathrm{L}_0^2(\partial\Omega)} 
    \\ &=\nonumber\int_{\partial\Omega} \frac{1}{|\mathrm{x}-\mathrm{y}|^2}d\sigma_\mathrm{y}
    \\ &= \int_{\partial\Omega} \frac{1}{|\mathrm{x}-\mathrm{z}|^2}d\sigma_\mathrm{y} + \mathcal{O}\Big(\int_{\partial\Omega} \frac{1}{|\mathrm{x}-\mathrm{z}|^3}|\mathrm{y}-\mathrm{z}|d\sigma_\mathrm{y}\Big) \sim \delta^{2-2\mathrm{q}}.
\end{align}
Then using $\Vert \mathrm{f}_0 \Vert_{\mathrm{L}_0^2(\partial\Omega)} \sim \delta^{1-\mathrm{q}}$ and $\Vert \bm{\mathrm{T}} \Vert_{\mathcal{L}(\mathrm{L}_0^2(\partial\Omega))} \sim 1$, see \cite{ammari}, we do the following estimates to arrive at
\begin{align}\label{1}
 &\nonumber\Big|\frac{1}{\alpha}  \int_{\partial\Omega} \mathrm{f}_0\bm{\mathrm{T}}^*\big( \partial_\nu u^\mathrm{i}(\mathrm{v})\big)d\sigma_\mathrm{v}\Big| 
 \\ &\lesssim\Big|\frac{1}{\alpha}  \int_{\partial\Omega} \bm{\mathrm{T}}\big(\mathrm{f}_0\big) \partial_\nu u^\mathrm{i}(\mathrm{v})d\sigma_\mathrm{v}\Big| \lesssim \frac{1}{\alpha}\Vert \mathrm{f}_0 \Vert_{\mathrm{L}_0^2(\partial\Omega)} \Vert \bm{\mathrm{T}} \Vert_{\mathcal{L}(\mathrm{L}_0^2(\partial\Omega))} \Vert \partial_\nu u^\mathrm{i} \Vert_{\mathrm{L}^2_0(\partial\Omega)} \sim \delta^{4-\mathrm{q}},
\end{align}
and
\begin{align}\label{2}
 &\nonumber\Big|\frac{\rho_\mathrm{m}}{2} \mathrm{c}^{-2}_0 \int_{\partial\Omega} \mathrm{f}_0\bm{\mathrm{T}}^*\Big(\int_{\partial\Omega}\partial_\mathrm{t}^2\partial_\nu u(\mathrm{y}, \mathrm{t}) \frac{(\mathrm{v}-\mathrm{y})\cdot\nu_\mathrm{v}}{|\mathrm{v}-\mathrm{y}|}d\sigma_\mathrm{y}\Big)d\sigma_\mathrm{v}\Big| \\&\lesssim |\partial\Omega|^\frac{1}{2}\Vert \mathrm{f}_0 \Vert_{\mathrm{L}^2_0(\partial\Omega} \Vert \bm{\mathrm{T}} \Vert_{\mathcal{L}(\mathrm{L}^2_0(\partial\Omega))}\underbrace{\Big|\int_{\partial\Omega}\partial_\mathrm{t}^2\partial_\nu u(\mathrm{y}, \mathrm{t}) \frac{(\mathrm{v}-\mathrm{y})\cdot\nu_\mathrm{v}}{|\mathrm{v}-\mathrm{y}|}d\sigma_\mathrm{y}\Big|}_{\sim \delta^{\frac{5}{2}}\quad \text{by}\ (\ref{est7})}
 \sim \delta^{\frac{9}{2}-\mathrm{q}}.
\end{align}
Similarly, we get
\begin{align}\label{3}
&\nonumber\Big|\frac{5\rho_\mathrm{m}}{6}\mathrm{c}^{-3}_0 \int_{\partial\Omega} \mathrm{f}_0\bm{\mathrm{T}}^*\Big(\int_{\partial\Omega}\partial_\mathrm{t}^3\partial_\nu u(\mathrm{y}, \mathrm{t}_\mathrm{m} ) (\mathrm{v}-\mathrm{y})\cdot\nu_\mathrm{v} d\sigma_\mathrm{y}\Big)d\sigma_\mathrm{v}\Big|\\ &\lesssim |\partial\Omega|^\frac{1}{2}\Vert \mathrm{f}_0 \Vert_{\mathrm{L}^2_0(\partial\Omega} \Vert \bm{\mathrm{T}} \Vert_{\mathcal{L}(\mathrm{L}^2_0(\partial\Omega))} \underbrace{\Big|\int_{\partial\Omega}\partial_\mathrm{t}^3\partial_\nu u(\mathrm{y}, \mathrm{t}_\mathrm{m} ) (\mathrm{v}-\mathrm{y})\cdot\nu_\mathrm{v} d\sigma_\mathrm{y}\Big|}_{\sim \delta^4\quad \text{by}\ (\ref{est2})}
\sim \delta^{6-\mathrm{q}},
\end{align}
\begin{align}\label{4}
&\nonumber\Big|\frac{\gamma\rho_\mathrm{m}}{\alpha} \int_{\partial\Omega} \mathrm{f}_0\bm{\mathrm{T}}^*\Big( \partial_\nu\int_\Omega\frac{1}{4\pi|\mathrm{v}-\mathrm{y}|}\partial_\mathrm{t}^2u(\mathrm{y},\mathrm{t})d\mathrm{y}\Big)d\sigma_\mathrm{v}\Big|   \\ &\lesssim \frac{1}{\alpha}\Vert \mathrm{f}_0 \Vert_{\mathrm{L}^2_0(\partial\Omega} \Vert \bm{\mathrm{T}} \Vert_{\mathcal{L}(\mathrm{L}^2_0(\partial\Omega))} \underbrace{\Big\Vert\partial_\nu\int_\Omega\frac{1}{4\pi|\mathrm{v}-\mathrm{y}|}\partial_\mathrm{t}^2u(\mathrm{y},\mathrm{t})d\mathrm{y}\Big\Vert_{\mathrm{L}^2(\partial\Omega)}}_{\sim \delta^2\quad \text{by}\ (\ref{est45})}
\sim \delta^{5-\mathrm{q}},
\end{align} 
and
\begin{align}\label{5}
&\nonumber\Big|\mathrm{c}^{-2}_0\frac{\gamma\rho_\mathrm{m}}{\alpha} \int_{\partial\Omega} \mathrm{f}_0\bm{\mathrm{T}}^*\Big( \partial_\nu\int_\Omega|\mathrm{x}-\mathrm{y}|\partial_\mathrm{t}^4u(\mathrm{y},\mathrm{t}_3)d\mathrm{y}\Big)d\sigma_\mathrm{v}\Big|   \\&\lesssim  |\partial\Omega|^\frac{1}{2}\frac{1}{\alpha}\Vert \mathrm{f}_0 \Vert_{\mathrm{L}^2_0(\partial\Omega} \Vert \bm{\mathrm{T}} \Vert_{\mathcal{L}(\mathrm{L}^2_0(\partial\Omega))} \underbrace{\Big|\partial_\nu\int_\Omega|\mathrm{x}-\mathrm{y}|\partial_\mathrm{t}^4u(\mathrm{y},\mathrm{t}_3)d\mathrm{y}\Big|}_{\sim \delta^2\quad \text{by}\ (\ref{est451})}
\sim \delta^{7-\mathrm{q}}.  
\end{align}
Therefore, using the estimates (\ref{1}), (\ref{2}), (\ref{3}), (\ref{4}), (\ref{5}) we arrive at
\begin{align}\label{e2}
    \frac{\alpha\rho_\mathrm{m}}{4\pi}\int_{\partial\Omega}\Big(\frac{1}{|\mathrm{x}-\mathrm{y}|}-\frac{1}{|\partial\Omega|}\int_{\partial\Omega}\frac{1}{|\mathrm{x}-\mathrm{y}|}\Big) \partial_\nu u (\mathrm{y},\mathrm{t}-\mathrm{c}_0^{-1}|\mathrm{x}-\mathrm{y}|) d\sigma_\mathrm{y}  \sim \delta^{2-\mathrm{q}}, \ \text{where}\ \mathrm{q} \in [0,1].
\end{align}
\bigbreak
\noindent
With the aid of the aforementioned estimate (\ref{e2}) and Proposition \ref{p1}, we can obtain the following after inserting the value of $\mathrm{Y}(\mathrm{t})$ in (\ref{finalestimate}).  Additionally, if $\mathrm{x}\in \mathbb{R}^{\textcolor{black}{3}}\setminus\overline{\Omega}$ such that $\textbf{dist}(\mathrm{x},\Omega)\sim \delta^\mathrm{q}$ and then $|\mathrm{x}-\mathrm{z}| \sim \delta +\delta^\mathrm{q}$, where $\mathrm{q}\in [0,1]$, we have
\begin{align}\label{formula}
    u^\mathrm{s}(\mathrm{x},\mathrm{t})
    = \frac{\alpha \rho_\mathrm{m}\mathrm{p}^{-\frac{1}{2}}}{4\pi} \textcolor{black}{\frac{\rho_\mathrm{c}}{\mathrm{k}_\mathrm{c}}\mathrm{c}_0^2}\frac{1}{|\partial\Omega|}\int_{\partial\Omega}\frac{1}{|\mathrm{x}-\mathrm{y}|}d\sigma_\mathrm{y}\int_0^{\mathrm{t}-\mathrm{c}_0^{-1}|\mathrm{x}-\mathrm{z}|} \sin\Big(\mathrm{p}^{-\frac{1}{2}}(\mathrm{t}-\mathrm{c}_0^{-1}|\mathrm{x}-\mathrm{z}|-\tau)\Big)\int_{\partial\Omega} \partial_\nu u^\mathrm{i} d\tau  + \mathcal{O}(\delta^{2-\mathrm{q}}).
\end{align}
Then the approximation derived in (\ref{formula}) can be estimated more precisely with the following Lemma.
\begin{lemma}\label{lemma}
We have the following approximation of $u^s$ 
\begin{align}
    u^\mathrm{s}(\mathrm{x},\mathrm{t})
    = \frac{\omega_\mathrm{M}\rho_\mathrm{m}|\mathrm{B}|}{4\pi\overline{\mathrm{k}_\mathrm{c}}} \delta \frac{1}{|\partial\Omega|}\int_{\partial\Omega}\frac{1}{|\mathrm{x}-\mathrm{y}|}d\sigma_\mathrm{y}\int_0^{\mathrm{t}-\mathrm{c}_0^{-1}|\mathrm{x}-\mathrm{z}|} \sin\big(\omega_\mathrm{M}(\mathrm{t}-\mathrm{c}_0^{-1}|\mathrm{x}-\mathrm{z}|-\tau)\big)u_{\mathrm{t}\mathrm{t}}^\mathrm{i}(\mathrm{z},\mathrm{\tau}) d\tau + \mathcal{O}(\delta^{2-\mathrm{q}}),
\end{align}
where $\omega_\mathrm{M} = \sqrt{
\frac{2\overline{\mathrm{k}_\mathrm{c}}}{\mathrm{A}_{\partial\mathrm{B}}\rho_\mathrm{m}}}$ and $\displaystyle
    \mathrm{A}_{\partial\Omega} := \frac{1}{|\partial \Omega|}\int_{\partial\Omega}\int_{\partial\Omega}\frac{(\mathrm{x}-\mathrm{y})\cdot\nu}{|\mathrm{x}-\mathrm{y}|}d\sigma_\mathrm{x}d\sigma_\mathrm{y}= \delta^2 \mathrm{A}_{\partial\mathrm{B}}.
$
\end{lemma}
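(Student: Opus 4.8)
The plan is to start from the representation (\ref{formula}) already obtained and to perform two reductions: (i) identify the temporal frequency $\mathrm{p}^{-1/2}$ with the Minnaert frequency $\omega_\mathrm{M}$, and (ii) rewrite the source term $\int_{\partial\Omega}\partial_\nu u^\mathrm{i}$ in terms of $u^\mathrm{i}_{\mathrm{tt}}(\mathrm{z},\cdot)$. Both reductions rely on the scaling relations (\ref{cond-bubble}) and on the identity $\mathrm{A}_{\partial\Omega}=\delta^2\mathrm{A}_{\partial\mathrm{B}}$, which I would first record by the change of variables $\mathrm{x}=\delta\xi+\mathrm{z}$, $\mathrm{y}=\delta\eta+\mathrm{z}$ in the defining double integral: the normalized integrand $(\mathrm{x}-\mathrm{y})\cdot\nu/|\mathrm{x}-\mathrm{y}|$ is scale-invariant, while the two surface measures together with the normalizing perimeter $|\partial\Omega|=\delta^2|\partial\mathrm{B}|$ contribute a net factor $\delta^2$.

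First I would compute $\mathrm{p}^{-1/2}$. Recall $\mathrm{p}=\frac{\alpha\rho_\mathrm{m}}{2}\frac{\rho_\mathrm{c}}{\mathrm{k}_\mathrm{c}}\mathrm{A}_{\partial\Omega}$ with $\alpha=\rho_\mathrm{c}^{-1}-\rho_\mathrm{m}^{-1}$. Using $\rho_\mathrm{c}=\overline{\rho_\mathrm{c}}\delta^2$ and $\mathrm{k}_\mathrm{c}=\overline{\mathrm{k}_\mathrm{c}}\delta^2$, one has $\alpha=(\overline{\rho_\mathrm{c}}\delta^2)^{-1}(1+\mathcal{O}(\delta^2))$ and $\rho_\mathrm{c}/\mathrm{k}_\mathrm{c}=\overline{\rho_\mathrm{c}}/\overline{\mathrm{k}_\mathrm{c}}$, so that $\mathrm{A}_{\partial\Omega}=\delta^2\mathrm{A}_{\partial\mathrm{B}}$ yields $\mathrm{p}=\frac{\rho_\mathrm{m}\mathrm{A}_{\partial\mathrm{B}}}{2\overline{\mathrm{k}_\mathrm{c}}}(1+\mathcal{O}(\delta^2))$; hence $\mathrm{p}^{-1/2}=\omega_\mathrm{M}(1+\mathcal{O}(\delta^2))$, i.e. exactly the Minnaert frequency up to a relative error $\mathcal{O}(\delta^2)$. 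I would note that replacing $\mathrm{p}^{-1/2}$ by $\omega_\mathrm{M}$ in both the amplitude and the argument of the sine costs only a relative $\mathcal{O}(\delta^2)$, since the argument is shifted by $\mathcal{O}(\delta^2)$ uniformly on $[0,\mathrm{T}]$ and the sine is Lipschitz.

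Next I would rewrite the source. Since $u^\mathrm{i}$ solves (\ref{wave1}) with constant coefficients on $\Omega$, it satisfies $\Delta u^\mathrm{i}=\mathrm{c}_0^{-2}u^\mathrm{i}_{\mathrm{tt}}$ there; the divergence theorem then gives $\int_{\partial\Omega}\partial_\nu u^\mathrm{i}(\cdot,\tau)=\mathrm{c}_0^{-2}\int_\Omega u^\mathrm{i}_{\mathrm{tt}}(\mathrm{y},\tau)d\mathrm{y}$. A Taylor expansion about $\mathrm{z}$ together with $|\Omega|=\delta^3|\mathrm{B}|$ yields $\int_\Omega u^\mathrm{i}_{\mathrm{tt}}(\mathrm{y},\tau)d\mathrm{y}=\delta^3|\mathrm{B}|\,u^\mathrm{i}_{\mathrm{tt}}(\mathrm{z},\tau)+\mathcal{O}(\delta^4)$ uniformly in $\tau\in[0,\mathrm{T}]$ (the linear term contributing at most $\mathcal{O}(\delta^4)$ and the remainder $\mathcal{O}(\delta^5)$, using the regularity of $u^\mathrm{i}$ guaranteed by $\lambda\in\mathcal{C}^9$). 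Substituting these two facts into (\ref{formula}), the factor $\mathrm{c}_0^2$ cancels against $\mathrm{c}_0^{-2}$, and collecting $\alpha\rho_\mathrm{m}\sim(\overline{\rho_\mathrm{c}}\delta^2)^{-1}\rho_\mathrm{m}$, $\rho_\mathrm{c}/\mathrm{k}_\mathrm{c}=\overline{\rho_\mathrm{c}}/\overline{\mathrm{k}_\mathrm{c}}$ and $\mathrm{p}^{-1/2}=\omega_\mathrm{M}$ collapses the amplitude to precisely $\frac{\omega_\mathrm{M}\rho_\mathrm{m}|\mathrm{B}|}{4\pi\overline{\mathrm{k}_\mathrm{c}}}\delta$, the asserted prefactor.

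The last and most delicate step is the error bookkeeping, which I expect to be the main obstacle. The amplitude in (\ref{formula}) is of size $\mathcal{O}(\delta^{-2})$ because $\alpha\sim\delta^{-2}$, and it multiplies both the geometric factor $\frac{1}{|\partial\Omega|}\int_{\partial\Omega}|\mathrm{x}-\mathrm{y}|^{-1}d\sigma_\mathrm{y}=\mathrm{Q}(\mathrm{x})\sim\delta^{-\mathrm{q}}$ and the (bounded) time integral. I would verify that each correction above, once multiplied by this $\mathcal{O}(\delta^{-2})\,\mathrm{Q}(\mathrm{x})$ factor, remains within $\mathcal{O}(\delta^{2-\mathrm{q}})$: the Taylor remainder $\mathcal{O}(\delta^4)$ gives $\delta^{-2}\cdot\delta^4\cdot\delta^{-\mathrm{q}}=\delta^{2-\mathrm{q}}$, the borderline term that fixes the stated accuracy, while the $\mathcal{O}(\delta^2)$ relative errors from $\alpha\approx\rho_\mathrm{c}^{-1}$ and $\mathrm{p}^{-1/2}\approx\omega_\mathrm{M}$ act on the leading $\delta^{1-\mathrm{q}}$ term and hence produce only $\delta^{3-\mathrm{q}}$; the $\mathcal{O}(\delta^4)$ already present in $\mathrm{g}(\tau)$ in (\ref{lamda}) is absorbed the same way. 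Combining everything reproduces the claimed expansion with remainder $\mathcal{O}(\delta^{2-\mathrm{q}})$, which completes the proof.
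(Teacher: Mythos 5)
Your proposal is correct and follows essentially the same route as the paper's own proof: starting from (\ref{formula}), using the scalings (\ref{cond-bubble}) together with $\mathrm{A}_{\partial\Omega}=\delta^2\mathrm{A}_{\partial\mathrm{B}}$ to collapse the prefactor $\frac{\alpha\rho_\mathrm{m}\mathrm{p}^{-1/2}}{4\pi}\frac{\rho_\mathrm{c}}{\mathrm{k}_\mathrm{c}}\mathrm{c}_0^2$ to $\frac{\omega_\mathrm{M}\rho_\mathrm{m}|\mathrm{B}|}{4\pi\overline{\mathrm{k}_\mathrm{c}}}\delta$, and rewriting $\int_{\partial\Omega}\partial_\nu u^\mathrm{i}$ via the divergence theorem, the equation $\Delta u^\mathrm{i}=\mathrm{c}_0^{-2}u^\mathrm{i}_{\mathrm{tt}}$ and a Taylor expansion at $\mathrm{z}$, exactly as in the paper's estimate (\ref{toy}). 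If anything, your bookkeeping is slightly more explicit than the paper's, which replaces $\mathrm{p}^{-1/2}$ by $\omega_\mathrm{M}$ inside the sine without comment, whereas you justify this substitution by the $\mathcal{O}(\delta^2)$ relative error and the Lipschitz bound on the sine.
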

\begin{proof}
See Section \ref{appen} for the proof.
\end{proof}
\bigbreak
\noindent
This completes the proof of Theorem \ref{1.1}.


\section{A priori estimates}\label{apri}     

\subsection{Preliminaries}          

We state the scaling properties for both the space and time variables. Let us define $\mathrm{T}_\delta := \mathrm{T}/\delta$. For both functions $\varphi$ and $\psi$ on $\Omega \times (0,\mathrm{T})$ and $\mathrm{B}\times (0,\mathrm{T}_\delta)$ as well as on $\partial\Omega \times (0,\mathrm{T})$ and $\partial\mathrm{B}\times (0,\mathrm{T}_\delta)$, respectively, we use the notation
\begin{align}\nonumber
    \hat{\varphi}(\xi,\tau) = \varphi^\wedge(\xi,\tau) := \varphi(\delta\xi + \mathrm{z},\delta\tau), \quad \quad (\xi,\tau) \in \partial\mathrm{B}\times (0,\mathrm{T}_\delta) \quad \text{and} \quad (\xi,\tau) \in \mathrm{B}\times (0,\mathrm{T}_\delta)\quad \text{respectively},\\ \nonumber
    \check{\psi}(\mathrm{x},\mathrm{t}) = \psi^\vee(\mathrm{x},\mathrm{t}) := \psi(\frac{\mathrm{x}-\mathrm{z}}{\delta},\frac{\mathrm{t}}{\delta}), \quad \quad (\mathrm{x},\mathrm{t}) \in \partial\Omega\times (0,\mathrm{T}_\delta) \quad \text{and} \quad (\mathrm{x},\mathrm{t}) \in \Omega\times (0,\mathrm{T}_\delta)\quad \text{respectively}.
\end{align}
We also note that
\begin{align}\label{ss2}
    \frac{\partial^\mathrm{n}\varphi^\wedge(\cdot,\tau)}{\partial\tau^\mathrm{n}} = \frac{\partial^\mathrm{n}\varphi(\cdot,\delta\tau)}{\partial\tau^\mathrm{n}} = \delta^\mathrm{n}\frac{\partial^\mathrm{n}\varphi(\cdot,\mathrm{t})}{\partial\tau^\mathrm{n}}, \quad \quad \mathrm{n} \in \mathbb{Z}_{+}.
\end{align}

\subsection{Proof of Proposition \ref{p1}}          

We begin by examining equation (\ref{wellposed}) in the Fourier-Laplace domain \cite{monk}. Here, we consider the transform parameter $\bm{\mathrm{s}}=\sigma+i\omega \in \mathbb{C}$, where $\sigma\in \mathbb{R}$ such that $\sigma>\sigma_0>0$ for some constant $\sigma_0$, and $\omega \in \mathbb{R}$. We define the function $u^{\ell}(\mathrm{x},\bm{\mathrm{s}})$ as follows:
\begin{align}
\nonumber
u^{\ell}(\mathrm{x},\bm{\mathrm{s}}) := \int_0^\infty u(\mathrm{x},\mathrm{t}) \exp{(-\bm{\mathrm{s}}\mathrm{t})}d\mathrm{t}.
\end{align}
Next, we consider equation (\ref{wellposed}) with $\mathrm{F}(\mathrm{x},\mathrm{t}):= (\mathrm{k}_\mathrm{m}^{-1} - \mathrm{k}^{-1})u^\mathrm{i}_{\mathrm{t}\mathrm{t}} - \text{div} (\rho_\mathrm{m}^{-1} - \rho^{-1})\nabla u^\mathrm{i}$, and take its Laplace transformation with respect to the time variable to obtain:
\begin{align}\label{wellesti1}
\mathrm{k}^{-1}\bm{\mathrm{s}}^2(u^\mathrm{s})^\ell- \text{div}\rho^{-1}\nabla (u^\mathrm{s})^\ell = \mathrm{F}^\ell(\mathrm{x},\bm{\mathrm{s}}),\ \text{in}\ \mathbb{R}^3\times(0,\mathrm{T}),
\end{align}
Furthermore, we derive the an expression through its variational form, as outlined in Section \ref{well1}. This expression is given by:
\begin{align}\label{vf}
\overline{\bm{\mathrm{s}}}|\bm{\mathrm{s}}|^2 \|\mathrm{k}^{-1}(u^\mathrm{s})^\ell(\cdot,\bm{\mathrm{s}})\|^2_{\mathrm{L}^2(\mathbb R^3)} + \overline{\bm{\mathrm{s}}} \left\|\rho^{-1} \nabla(u^\mathrm{s})^\ell(\cdot,\bm{\mathrm{s}})\right\|^2_{\mathrm{L}^2(\mathbb R^3)} = \overline{\bm{\mathrm{s}}} \big\langle\mathrm{F}^\ell(\cdot,\bm{\mathrm{s}}), (u^\mathrm{s})^\ell(\cdot,\bm{\mathrm{s}}) \big\rangle.
\end{align}
Then, after some straightforward calculations, we obtain the following inequality:
\begin{align}\label{ineq}
    &\nonumber\|\mathrm{k}^{-1}(u^\mathrm{s})^\ell(\cdot,\bm{\mathrm{s}})\|^2_{\mathrm{L}^2(\mathbb R^3)} + \left\|\rho^{-1} \nabla(u^\mathrm{s})^\ell(\cdot,\bm{\mathrm{s}})\right\|^2_{\mathrm{L}^2(\mathbb R^3)} \\&\ge\nonumber \frac{1}{2}\Big(|\mathrm{k}^{-1}(u^\mathrm{s})^\ell(\cdot,\bm{\mathrm{s}})\|_{\mathrm{L}^2(\mathbb R^3)} + \left\|\rho^{-1} \nabla(u^\mathrm{s})^\ell(\cdot,\bm{\mathrm{s}})\right\|_{\mathrm{L}^2(\mathbb R^3)}\Big)^2
    \\&\ge\frac{1}{2}\delta^{-2} \Vert (u^\mathrm{s})^\ell(\cdot,\bm{\mathrm{s}})\Vert_{\mathrm{H}^{1}(\Omega)} \Vert (u^\mathrm{s})^\ell(\cdot,\bm{\mathrm{s}})\Vert_{\mathrm{H}^{1}(\mathbb R^3)}.
\end{align}
We also observe that
\begin{align}\label{ineq2}
    \big\langle\mathrm{F}^\ell(\cdot,\bm{\mathrm{s}}), (u^\mathrm{s})^\ell(\cdot,\bm{\mathrm{s}}) \big\rangle_{\mathrm{H}^{-1}(\mathbb R^3),\mathrm{H}^{1}(\mathbb R^3)} \lesssim \delta^{-\frac{1}{2}} \Vert (u^\mathrm{s})^\ell(\cdot,\bm{\mathrm{s}})\Vert_{\mathrm{H}^{1}(\mathbb R^3)}
\end{align}
Thereafter, taking the real part of equation (\ref{vf}), utilizing the coercivity of the variational form and the estimates (\ref{ineq}), (\ref{ineq2}), we obtain the following estimate:
\begin{align}\label{addition}
\Vert (u^\mathrm{s})^\ell(\cdot,\bm{\mathrm{s}})\Vert_{\mathrm{H}^{1}(\Omega)} \le \delta^\frac{3}{2}\frac{2|\bm{\mathrm{s}}|}{\min{\{\sigma,\sigma^3\}}}.
\end{align}
Let us now define the inverse Laplace transform of $(u^\mathrm{s})^\ell(\mathrm{x},\cdot)$ for $\Re(\bm{\mathrm{s}}) =\sigma>0$ as:
\begin{align}
u^\mathrm{s}(\mathrm{x},\mathrm{t}):= \frac{1}{2\pi i}\int_{\sigma-i\infty}^{\sigma+i\infty}e^{\bm{\mathrm{s}}\mathrm{t}}(u^\mathrm{s})^\ell(\mathrm{x},\bm{\mathrm{s}})d\bm{\mathrm{s}} = \frac{1}{2\pi}\int_{-\infty}^{\infty}e^{(\sigma+i\omega)\mathrm{t}}(u^\mathrm{s})^\ell(\mathrm{x},\sigma+i\omega)d\omega.
\end{align}
Based on the estimate with respect to $\bm{\mathrm{s}}$ in Equation (\ref{addition}), $u^\mathrm{s}(\mathrm{x},\mathrm{t})$ is well-defined. Furthermore, one can demonstrate that $u^\mathrm{s}(\mathrm{x},\mathrm{t})$ is independent of $\sigma$ by applying a classical contour integration method, as detailed in \cite[pp. 39]{sayas}.
\newline
If we consider the Fourier transform with respect to the time variable $\mathrm{t}$, then we obtain $\Fourier_{\mathrm{t}\to \omega}\big(e^{-\sigma \mathrm{t}} \partial^\mathrm{k}_\mathrm{t}u^\mathrm{s}(\mathrm{x},\mathrm{t})\big)=\bm{\mathrm{s}}^\mathrm{k}(u^\mathrm{s})^\ell(\mathrm{x},\bm{\mathrm{s}})$, where $\bm{\mathrm{s}}=\sigma+i\omega$. Thus, for $\mathrm{r}\in \mathbb N$, we have the following estimate:
\begin{align}
\nonumber
  \|u^\mathrm{s}\|^2_{\mathrm{H}^{\mathrm{r}}_{0,\sigma}\big((0,\mathrm{T}); \mathrm{H}^1(\Omega)\big)} & = \int_0^{\mathrm{T}} e^{-2\sigma \mathrm{t}} \sum_{\mathrm{k}=0}^{\mathrm{r}} \mathrm{T}^{2\mathrm{k}} \|\partial^\mathrm{k}_\mathrm{t} u^\mathrm{s}(\cdot,\mathrm{t})\|^2_{\mathrm{H}^1(\Omega)} \,d\mathrm{t} \\
  & \nonumber \lesssim \int_{\mathbb{R}_+} \int_{\Omega} e^{-2\sigma \mathrm{t}} \sum_{\mathrm{k}=0}^{\mathrm{r}} \Big[|\partial^\mathrm{k}_\mathrm{t} u^\mathrm{s}(\mathrm{x},\mathrm{t})|^2 + |\partial^\mathrm{k}_\mathrm{t} \nabla u^\mathrm{s}(\mathrm{x},\mathrm{t})|^2\Big]  \, d\mathrm{x}\, d\mathrm{t} \\
  & \nonumber \lesssim \int_{\Omega} \int_{\mathbb{R}} \sum_{\mathrm{k}=0}^{\mathrm{r}} \Big[\big|\Fourier ( e^{-\sigma \mathrm{t}} \partial^\mathrm{k}_\mathrm{t} u^\mathrm{s}(\mathrm{x},\mathrm{t})\big|^2+ \big|\Fourier ( e^{-\sigma \mathrm{t}} \partial^\mathrm{k}_\mathrm{t} \nabla u^\mathrm{s}(\mathrm{x},\mathrm{t})\big|^2\Big] \, d\mathrm{t}\, d\mathrm{x} \\
  & \nonumber\lesssim \sum_{\mathrm{k}=0}^{\mathrm{r}} \int_{\sigma+i\mathbb{R}} |\bm{\mathrm{s}}|^{2\mathrm{k}} \|(u^\mathrm{s})^\ell(\cdot,\bm{\mathrm{s}})\|^2_{\mathrm{H}^1(\Omega)} \, d\bm{\mathrm{s}} \simeq \delta^3.
\end{align}
Consider the total field $u=u^\mathrm{s}+u^\mathrm{i}$, where $u^\mathrm{s}$ and $u^\mathrm{i}$ represent the scattered and incident fields, respectively. Assuming the smoothness of $u^\mathrm{i}$, we draw the following conclusions:
\begin{align}
\|u\|_{\mathrm{H}^{\mathrm{r}}_{0,\sigma}\big((0,\mathrm{T}); \mathrm{L}^2(\Omega)\big)} \sim \delta^\frac{3}{2}\quad \text{and}; \|\nabla u\|_{\mathrm{H}^{\mathrm{r}}_{0,\sigma}\big((0,\mathrm{T}); \mathrm{L}^2(\Omega)\big)} \sim \delta^\frac{3}{2}.
\end{align}
As a result, we estimate:
\begin{align}
\Vert u(\cdot,\mathrm{t})\Vert_{\mathrm{L}^2(\Omega)} \nonumber &\lesssim \Vert u\Vert_{\mathrm{H}_{0,\sigma}^\mathrm{r}(0,\mathrm{T};\mathrm{L}^2(\Omega))}
\\ &\lesssim \delta^{\frac{3}{2}},
\end{align}
and
\begin{align}
\Vert\nabla u(\cdot,\mathrm{t})\Vert_{\mathrm{L}^2(\Omega)} \nonumber &\lesssim \Vert \nabla u\Vert_{\mathrm{H}_{0,\sigma}^\mathrm{r}(0,\mathrm{T};\mathrm{L}^2(\Omega))}
\\ &\lesssim \delta^{\frac{3}{2}}.
\end{align}
Furthermore, we can deduce that:
\begin{align}\label{esti2}
\Vert \partial_\mathrm{t}^\mathrm{k}u(\cdot,\mathrm{t})\Vert_{\mathrm{L}^2(\Omega)} \lesssim \delta^{\frac{3}{2}}, \quad \mathrm{t}\in [0,\mathrm{T}], \quad \mathrm{k}=1,2,\cdot \cdot \cdot.
\end{align}
and
\begin{align}\label{gradu}
\Vert \partial_\mathrm{t}^\mathrm{k}\nabla u(\cdot,\mathrm{t})\Vert_{\mathrm{L}^2(\Omega)} \lesssim \delta^{\frac{3}{2}}, \quad \mathrm{t}\in [0,\mathrm{T}], \quad \mathrm{k}=1,2,\cdot \cdot \cdot.
\end{align}
This completes the proof of Proposition \ref{p1}.

\subsection{Proof of Proposition \ref{p3}}      

In order to prove Proposition \ref{p3}, we need the following estimate of $\partial_\mathrm{t}^\mathrm{k}\nabla u(\cdot,\mathrm{t}),\ \mathrm{k}=0,1,\cdot \cdot \cdot.$ 

\begin{proposition}\label{p3.1}
We have the following estimate for $\partial_\mathrm{t}^\mathrm{k}\nabla u(\cdot,\mathrm{t}),\ \mathrm{k}=0,1,\cdot \cdot \cdot.$
\begin{align}\nonumber
\Vert \partial_\mathrm{t}^\mathrm{k}\nabla u(\cdot,\mathrm{t})\Vert_{\mathbb{L}^2(\Omega)} \lesssim \delta^{\textcolor{black}{\frac{5}{2}}}, \quad \mathrm{t}\in [0,\mathrm{T}], \quad \mathrm{k}=0,1,\cdot \cdot \cdot.
\end{align}
\end{proposition}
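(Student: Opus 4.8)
The plan is to exploit the Helmholtz-type orthogonal splitting of curl-free fields prepared above. Inside $\Omega$ the total field obeys $\frac{\rho_\mathrm c}{\mathrm k_\mathrm c}u_{\mathrm t\mathrm t}-\Delta u=0$, so for fixed $\mathrm t$ I write $u=w+h$ on $\Omega$, where $w\in\mathrm H^1(\Omega)$ with $w|_{\partial\Omega}=0$ solves $\Delta w=\Delta u=\frac{\rho_\mathrm c}{\mathrm k_\mathrm c}\,u_{\mathrm t\mathrm t}$, and $h$ is harmonic in $\Omega$ with $h|_{\partial\Omega}=u|_{\partial\Omega}$. Then $\nabla u\in\mathbb{H}(\text{curl}\;0,\Omega)$ splits as $\nabla w\perp\nabla h$ in $\mathbb{L}^2(\Omega)$ (integrate by parts, using $w|_{\partial\Omega}=0$ and $\Delta h=0$), with $\nabla h\in\nabla\mathbb{H}_{\text{arm}}$, so that $\|\nabla u\|_{\mathbb{L}^2(\Omega)}^2=\|\nabla w\|_{\mathbb{L}^2(\Omega)}^2+\|\nabla h\|_{\mathbb{L}^2(\Omega)}^2$ and it suffices to bound each summand by $\delta^{5/2}$. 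Since the elliptic splitting commutes with $\partial_\mathrm t^\mathrm k$ (the coefficients are time-independent), the case of general $\mathrm k$ is identical once the $\mathrm k$-th order bounds of Proposition \ref{p1} replace the zeroth-order ones; I therefore only discuss $\mathrm k=0$.

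The first summand is the routine part. Testing $\Delta w=\frac{\rho_\mathrm c}{\mathrm k_\mathrm c}u_{\mathrm t\mathrm t}$ against $w$ and integrating by parts gives $\|\nabla w\|_{\mathbb{L}^2(\Omega)}^2=-\frac{\rho_\mathrm c}{\mathrm k_\mathrm c}\int_\Omega w\,u_{\mathrm t\mathrm t}\le\frac{\rho_\mathrm c}{\mathrm k_\mathrm c}\|w\|_{\mathrm L^2(\Omega)}\|u_{\mathrm t\mathrm t}\|_{\mathrm L^2(\Omega)}$. Because $w$ vanishes on $\partial\Omega$ and $\Omega=\delta\mathrm B+\mathrm z$, the Poincaré inequality holds with constant $\sim\delta$, i.e. $\|w\|_{\mathrm L^2(\Omega)}\lesssim\delta\|\nabla w\|_{\mathbb{L}^2(\Omega)}$. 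Combining these, and using $\frac{\rho_\mathrm c}{\mathrm k_\mathrm c}\sim1$ from (\ref{cond-bubble}) together with $\|u_{\mathrm t\mathrm t}\|_{\mathrm L^2(\Omega)}\lesssim\delta^{3/2}$ from Proposition \ref{p1}, I obtain $\|\nabla w\|_{\mathbb{L}^2(\Omega)}\lesssim\frac{\rho_\mathrm c}{\mathrm k_\mathrm c}\,\delta\,\|u_{\mathrm t\mathrm t}\|_{\mathrm L^2(\Omega)}\lesssim\delta^{5/2}$, the desired order.

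The grad-harmonic summand is the heart of the matter. A direct energy identity only reproduces the weaker $\delta^{3/2}$ bound already present in (\ref{gradu}): the extra power of $\delta$ must come from the smallness of the interior flux forced by the contrast in (\ref{con})--(\ref{cond-bubble}). Using $\int_{\partial\Omega}\partial_\nu h\,d\sigma=\int_\Omega\Delta h=0$ one has, for every constant $\mathrm c$, $\|\nabla h\|_{\mathbb{L}^2(\Omega)}^2=\int_{\partial\Omega}(u-\mathrm c)\,\partial_\nu h\,d\sigma\le|u|_{\mathrm H^{1/2}(\partial\Omega)}\,\|\partial_\nu h\|_{\mathrm H^{-1/2}(\partial\Omega)}$, and after rescaling to $\mathrm B$ the Dirichlet energy of the harmonic extension is in turn controlled by its normal flux through the (size-one) interior Dirichlet--to--Neumann map; hence everything reduces to proving $\|\partial_\nu h\|_{\mathrm H^{-1/2}(\partial\Omega)}\lesssim\delta^2$. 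I would extract this from the boundary integral equation (\ref{important}) for $\partial_\nu u$: since $\tfrac1\alpha\sim\delta^2$ while the operator $\big(\tfrac1\alpha+\tfrac{\rho_\mathrm m}2\big)+\bm{\mathcal K}^*_\textbf{Lap}$ is boundedly invertible on the mean-zero space $\mathrm H^{-\frac12}_0(\partial\Omega)$ with inverse $\bm{\mathrm T}^*$ of norm $\sim1$ (exactly the ingredient used in (\ref{1})--(\ref{5})), inverting that equation and bounding its right-hand side by Proposition \ref{p1} yields $\|\partial_\nu u\|_{\mathrm H^{-\frac12}(\partial\Omega)}\lesssim\delta^2$, and splitting off the strictly smaller contribution $\partial_\nu w$ gives the same order for $\partial_\nu h$.

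The main obstacle is not any single estimate but the $\delta$-weight bookkeeping. The norms (\ref{normd1})--(\ref{normd2}) do not rescale by a single power of $\delta$: under $\mathrm x=\delta\xi+\mathrm z$ the $\mathrm L^2$ part of $\mathrm H^{\frac12}(\partial\Omega)$ carries $\delta$ while the Slobodeckij seminorm carries $\delta^{1/2}$, so the passage to the fixed domain $\mathrm B$, the uniform-in-$\delta$ mapping properties of $\bm{\mathrm T}^*$ and of the interior Dirichlet--to--Neumann map, and the comparison of the two pieces must all be tracked with care, precisely as in (\ref{1})--(\ref{5}). Once this is done, the orthogonal splitting delivers $\|\partial_\mathrm t^\mathrm k\nabla u(\cdot,\mathrm t)\|_{\mathbb{L}^2(\Omega)}^2=\|\partial_\mathrm t^\mathrm k\nabla w\|_{\mathbb{L}^2(\Omega)}^2+\|\partial_\mathrm t^\mathrm k\nabla h\|_{\mathbb{L}^2(\Omega)}^2\lesssim\delta^5$, uniformly in $\mathrm t\in[0,\mathrm T]$ and in $\mathrm k$, which is the assertion.
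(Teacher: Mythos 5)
Your splitting $u=w+h$, the orthogonality $\nabla w\perp\nabla h$, and the bound $\Vert\nabla w\Vert_{\mathbb{L}^2(\Omega)}\lesssim\delta^{5/2}$ (Poincar\'e with constant $\sim\delta$ plus Proposition \ref{p1}) are all correct, but the grad-harmonic part has two genuine gaps, one logical and one quantitative. The logical gap: you reduce everything to $\Vert\partial_\nu h\Vert_{\mathrm{H}^{-1/2}(\partial\Omega)}\lesssim\delta^2$ and assert this follows by inverting (\ref{important}) and ``bounding its right-hand side by Proposition \ref{p1}''. But the right-hand side of (\ref{important}) contains the boundary unknowns $\partial_\mathrm{t}^2\partial_\nu u$ and $\partial_\mathrm{t}^3\partial_\nu u$, which the interior estimate of Proposition \ref{p1} cannot control. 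Controlling them is exactly the content of Proposition \ref{p3}, and in the paper the proof of Proposition \ref{p3} starts from the preliminary bound (\ref{p33}), which is obtained via Lemmas \ref{ll9} and \ref{ll10} by inserting $\Vert\nabla u\Vert^2_{\mathrm{H}^{\mathrm{r}}_{0,\sigma}(0,\mathrm{T};\mathbb{L}^2(\Omega))}\sim\delta^5$ --- i.e.\ precisely Proposition \ref{p3.1}, the statement you are proving. So, within the paper's logical structure, your reduction is circular unless you show the boundary bootstrap closes starting only from (\ref{gradu}), which you do not do; moreover the claim that $\partial_\nu w$ is ``strictly smaller'' is unsubstantiated: scaling the Dirichlet problem $\Delta\hat w=\delta^2\tfrac{\rho_\mathrm{c}}{\mathrm{k}_\mathrm{c}}\hat u_{\mathrm{t}\mathrm{t}}$ on $\mathrm{B}$ gives $\Vert\partial_\nu w\Vert_{\mathrm{H}^{-1/2}(\partial\Omega)}\lesssim\delta^2$, the same order as $\partial_\nu u$.

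The quantitative gap: even granting $\Vert\partial_\nu h\Vert_{\mathrm{H}^{-1/2}(\partial\Omega)}\lesssim\delta^2$, your duality step cannot reach $\delta^{5/2}$. The sharp trace bound is $|u-\mathrm{c}|_{\mathrm{H}^{1/2}(\partial\Omega)}\lesssim\Vert\nabla u\Vert_{\mathbb{L}^2(\Omega)}$ (the Slobodeckij seminorm rescales with $\delta^{1/2}$ while $\Vert\nabla\hat u\Vert_{\mathbb{L}^2(\mathrm{B})}=\delta^{-1/2}\Vert\nabla u\Vert_{\mathbb{L}^2(\Omega)}$ by (\ref{l11})), so your inequality becomes $\Vert\nabla h\Vert^2_{\mathbb{L}^2(\Omega)}\lesssim\big(\Vert\nabla w\Vert_{\mathbb{L}^2(\Omega)}+\Vert\nabla h\Vert_{\mathbb{L}^2(\Omega)}\big)\delta^2$, whose self-consistent solution is $\Vert\nabla h\Vert_{\mathbb{L}^2(\Omega)}\lesssim\delta^2$, not $\delta^{5/2}$; the same ceiling appears via the Neumann energy estimate on $\mathrm{B}$, since $\Vert\nabla h\Vert_{\mathbb{L}^2(\Omega)}=\delta^{1/2}\Vert\nabla\hat h\Vert_{\mathbb{L}^2(\mathrm{B})}\lesssim\delta^{1/2}\Vert\partial_\nu\hat h\Vert_{\mathrm{H}^{-1/2}(\partial\mathrm{B})}\lesssim\Vert\partial_\nu h\Vert_{\mathrm{H}^{-1/2}(\partial\Omega)}$ by (\ref{l3}). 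To conclude along your lines you would need flux control at order $\delta^{5/2}$, which is strictly stronger than Proposition \ref{p3} and available nowhere. This half power is exactly what the paper's different mechanism supplies: it differentiates the Lippmann--Schwinger equation, rescales to $\mathrm{B}$, and inverts $\mathrm{I}+\alpha\nabla\bm{\mathrm{M}}^{(0)}_{\mathrm{B}}$ spectrally on $\mathbb{H}_0(\text{div}\,0,\mathrm{B})\oplus\mathbb{H}_0(\text{curl}\,0,\mathrm{B})\oplus\nabla\mathbb{H}_{\text{arm}}$, where the factors $(1+\alpha)^{-1}$ and $(1+\alpha\lambda^{(3)}_{\mathrm{n}})^{-1}\sim\delta^2$ absorb the contrast $\alpha\sim\delta^{-2}$; this is an interior cancellation that no boundary-flux bound of order $\delta^2$ can reproduce.
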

\begin{proof}
In order to prove the desired estimate for $\partial_\mathrm{t}^\mathrm{k}\nabla u(\cdot,\mathrm{t}),\ \mathrm{k}=1,2,\cdot \cdot \cdot$, we need to improve the obtained estimate (\ref{gradu}). 
Now, let us consider the following Lippmann-Schwinger equation
\begin{align}
\nonumber
    u + \beta \rho_\mathrm{m}\int_\Omega \frac{u_{\mathrm{t}\mathrm{t}}(\mathrm{y},\mathrm{t}-\mathrm{c}_0^{-1}|\mathrm{x}-\mathrm{y}|)}{4\pi|\mathrm{x}-\mathrm{y}|}d\mathrm{y} - \alpha \rho_\mathrm{m}\text{div}\int_\Omega \frac{\nabla u(\mathrm{y},\mathrm{t}-\mathrm{c}_0^{-1}|\mathrm{x}-\mathrm{y}|)}{4\pi|\mathrm{x}-\mathrm{y}|}d\mathrm{y} = u^\mathrm{i}.
\end{align}  
Thereafter, we take gradient on the both side of the above equation and we rewrite it as follows:
\begin{align}
    \nabla u - \alpha\rho_\mathrm{m} \nabla\text{div}\int_\Omega \frac{\nabla u(\mathrm{y},\mathrm{t}-\mathrm{c}_0^{-1}|\mathrm{x}-\mathrm{y}|)}{4\pi|\mathrm{x}-\mathrm{y}|}d\mathrm{y} =\nabla u^\mathrm{i} - \beta\rho_\mathrm{m}\nabla \int_\Omega \frac{u_{\mathrm{t}\mathrm{t}}(\mathrm{y},\mathrm{t}-\mathrm{c}_0^{-1}|\mathrm{x}-\mathrm{y}|)}{4\pi|\mathrm{x}-\mathrm{y}|}d\mathrm{y}.
\end{align} 
The Taylor's expansion imply the following
\begin{align}
    \nabla u \nonumber &- \alpha\rho_\mathrm{m} \nabla\text{div}\int_\Omega\frac{1}{{4\pi|\mathrm{x}-\mathrm{y}|}}\nabla u(\mathrm{y},\mathrm{t})d\mathrm{y} + \underbrace{\alpha\rho_\mathrm{m} \nabla\text{div}\int_\Omega\frac{1}{{4\pi|\mathrm{x}-\mathrm{y}|}}\mathrm{c}_0^{-1}|\mathrm{x}-\mathrm{y}|\partial_{\mathrm{t}}\nabla u(\mathrm{y},\mathrm{t})d\mathrm{y}}_{\textcolor{black}{= 0}} \\ \nonumber &- \alpha\rho_\mathrm{m} \nabla\text{div}\int_\Omega\frac{1}{{4\pi|\mathrm{x}-\mathrm{y}|}}\mathrm{c}_0^{-4}|\mathrm{x}-\mathrm{y}|^2\partial^2_{\mathrm{t}}\nabla u(\mathrm{y},\mathrm{t}_1)d\mathrm{y} =\nabla u^\mathrm{i} - \beta\rho_\mathrm{m} \nabla \int_\Omega \frac{1}{4\pi|\mathrm{x}-\mathrm{y}|} \partial_\mathrm{t}^2u(\mathrm{y},\mathrm{t})d\mathrm{y} \\ &+ \underbrace{\beta\rho_\mathrm{m} \nabla \int_\Omega \frac{1}{4\pi|\mathrm{x}-\mathrm{y}|}\mathrm{c}_0^{-1}|\mathrm{x}-\mathrm{y}| \partial_\mathrm{t}^3u(\mathrm{y},\mathrm{t})d\mathrm{y}}_{\textcolor{black}{= 0}} - \beta\rho_\mathrm{m} \nabla \int_\Omega \frac{1}{4\pi|\mathrm{x}-\mathrm{y}|}\mathrm{c}_0^{-4}|\mathrm{x}-\mathrm{y}|^2 \partial_\mathrm{t}^4u(\mathrm{y},\mathrm{t}_2)d\mathrm{y},
\end{align} 
where $\mathrm{t}_1, \mathrm{t}_2 \in \big(\mathrm{t}-\mathrm{c}^{-1}_0|\mathrm{x}-\mathrm{y}|,\mathrm{t}\big).$
We set
\begin{align}
    \bm{\mathrm{P}}_\Omega\Big[\partial^2_{\mathrm{t}}\nabla u(\mathrm{y},\mathrm{t}_1)\Big]:=\nabla\text{div}\int_\Omega\frac{1}{{4\pi}}\mathrm{c}_0^{-4}|\mathrm{x}-\mathrm{y}|\partial^2_{\mathrm{t}}\nabla u(\mathrm{y},\mathrm{t}_1)d\mathrm{y},
\end{align}
and 
\begin{align}
    \bm{\mathrm{J}}_\Omega\Big[\partial^4_{\mathrm{t}}u(\mathrm{y},\mathrm{t}_2)\Big]:= \nabla \int_\Omega \frac{1}{4\pi}\mathrm{c}_0^{-4}|\mathrm{x}-\mathrm{y}| \partial_\mathrm{t}^4u(\mathrm{y},\mathrm{t}_2)d\mathrm{y}.
\end{align}
\textcolor{black}{We recall the Magnetization operator related to the Laplace operator in $\Omega$ defined as follows:
\begin{align}
    \nabla \bm{\mathrm{M}}^{(0)}_{\Omega}\big[f\big](\mathrm{x}) := \nabla \int_\Omega\mathop{\nabla}\limits_{y} \frac{1}{4\pi|\mathrm{x}-\mathrm{y}|} \cdot f(\mathrm{y})d\mathrm{y},
\end{align}
and similarly, in the scaled domain $\mathrm{B}$, we have $\nabla \bm{\mathrm{M}}^{(0)}_{\mathrm{B}}\big[f\big]$. }
\newline

\noindent
Therefore, we obtain that 
\begin{align}
    \nabla u + \alpha\nabla\bm{\mathrm{M}}^{(0)}_\Omega[\nabla u] = \nabla u^\mathrm{i} - \beta\nabla \bm{\mathrm{V}}_\Omega[u_{\mathrm{t}\mathrm{t}}] + \alpha\rho_\mathrm{m} \bm{\mathrm{P}}_\Omega\Big[\partial^2_{\mathrm{t}}\nabla u(\mathrm{y},\mathrm{t}_1)\Big] + \beta\rho_\mathrm{m} \bm{\mathrm{J}}_\Omega\Big[\partial^4_{\mathrm{t}}u(\mathrm{y},\mathrm{t}_2)\Big].
\end{align}
Now, for a fixed $'\mathrm{t}'$, we rewrite the above expression in the scaled domain $\mathrm{B}$ as follows:
\begin{align}
    \widehat{\nabla u} + \alpha\nabla\bm{\mathrm{M}}^{(0)}_{\mathrm{B}}[\widehat{\nabla u}] = \widehat{\nabla u^\mathrm{i}} - \beta \delta\nabla \bm{\mathrm{V}}_\mathrm{B}^\delta[\hat{u}_{\mathrm{t}\mathrm{t}}] + \alpha\rho_\mathrm{m} \delta^2 \bm{\mathrm{P}}_\mathrm{B}^\delta\Big[\partial^2_{\mathrm{t}}\widehat{\nabla u}(\xi,\mathrm{t}_1)\Big] + \beta\rho_\mathrm{m}\delta^3\bm{\mathrm{J}}_\mathrm{B}^\delta\Big[\partial^4_{\mathrm{t}}\hat{u}(\mathrm{y},\mathrm{t}_2)\Big] .
\end{align}
We study the system of integral equations in the Hilbert space of vector-valued function $\big(\mathrm{L}^{2}(\textcolor{black}{\mathrm{B}})\big)^3.$ For the sake of simplicity, we use $\mathbb{L}^2(\textcolor{black}{\mathrm{B}}) =\big(\mathrm{L}^{2}(\textcolor{black}{\mathrm{B}})\big)^3$. This space can be decomposed into three sub-spaces as a direct sum as following, see \cite{raveski},
\begin{equation*}
    \mathbb{L}^{2} = \mathbb{H}_{0}(\text{div}\;0,\mathrm{B}) \oplus\mathbb{H}_{0}(\text{curl}\;0,\mathrm{B})\oplus \nabla \mathbb{H}_{\text{arm}}.
\end{equation*}
Consider $\big(\mathrm{e}^{(1)}_{\mathrm{n}}\big)_{\mathrm{n} \in \mathbb{N}}$ and $\big(\mathrm{e}^{(2)}_{\mathrm{n}}\big)_{\mathrm{n} \in \mathbb{N}}$ to be any orthonormal basis of the sub-spaces $\mathbb{H}_{0}(\text{div}\;0,\mathrm{B})$ and $\mathbb{H}_{0}(\text{curl}\;0,\mathrm{B})$ respectively. But for the sub-space $\nabla \mathbb{H}_{\text{arm}}$, we consider the complete orthonormal basis $\big(\mathrm{e}^{(3)}_{\mathrm{n}}\big)_{n \in \mathbb{N}}$ derived as the eigenfunctions of the magnetization operator $\nabla\bm{\mathrm{M}}^{\textcolor{black}{{(0)}}}_{\textcolor{black}{\mathrm{B}}}: \nabla \mathbb{H}_{\text{arm}}\rightarrow \nabla \mathbb{H}_{\text{arm}}$, \cite{friedmanI}, with $\big(\mathrm{\lambda}^{(3)}_{\mathrm{n}}\big)_{n \in \mathbb{N}}$ as the corresponding eigenvalues.
\bigbreak
\noindent
Then, from the definition of the sub-space $\mathbb{H}_{0}(\text{div}\;0,\mathrm{B})$ and integration by parts, we obtain
\begin{align}\label{pr1}
\Big\langle \bm{\mathrm{P}}_\mathrm{B}^\delta\Big[\partial^2_{\mathrm{t}}\nabla u(\mathrm{\eta},\mathrm{t}_1)\Big];\mathrm{e}^{(1)}_{\mathrm{n}}\Big\rangle \nonumber&=\Big\langle  \nabla\text{div}\int_\mathrm{B}\frac{1}{{4\pi|\mathrm{\xi}-\mathrm{\eta}|}}\mathrm{c}_0^{-4}|\mathrm{\xi}-\mathrm{\eta}|^2\partial^2_{\mathrm{t}}\nabla u(\mathrm{\eta},\mathrm{t}_1)d\mathrm{\eta};\mathrm{e}^{(1)}_{\mathrm{n}}\Big\rangle_{\mathbb{L}^{2}(\mathrm{B})}\\ \nonumber&=\int_{\mathrm{B}} \mathrm{e}^{(1)}_{\mathrm{n}}\cdot \nabla \text{div}\int_\mathrm{B}\frac{1}{{4\pi|\mathrm{\xi}-\mathrm{\eta}|}}\mathrm{c}_0^{-4}|\mathrm{\xi}-\mathrm{\eta}|^2\partial^2_{\mathrm{t}}\nabla u(\mathrm{\eta},\mathrm{t}_1)d\mathrm{\eta}\\ \nonumber
&=-\int_{\Omega} \nabla\cdot \mathrm{e}^{(1)}_{\mathrm{n}}\  \text{div}\int_\mathrm{B}\frac{1}{{4\pi|\mathrm{\xi}-\mathrm{\eta}|}}\mathrm{c}_0^{-4}|\mathrm{\xi}-\mathrm{\eta}|^2\partial^2_{\mathrm{t}}\nabla u(\mathrm{\eta},\mathrm{t}_1)d\mathrm{\eta} \\ &+ \int_{\partial \Omega} \mathrm{e}^{(1)}_{\mathrm{n}}\cdot\nu \  \text{div}\int_\mathrm{B}\frac{1}{{4\pi|\mathrm{\xi}-\mathrm{\eta}|}}\mathrm{c}_0^{-4}|\mathrm{\xi}-\mathrm{\eta}|^2\partial^2_{\mathrm{t}}\nabla u(\mathrm{\eta},\mathrm{t}_1)d\mathrm{\eta} = 0,
\end{align}
and similarly we can show that $\Big\langle \bm{\mathrm{J}}_\mathrm{B}^\delta\Big[\partial^4_{\mathrm{t}} u(\mathrm{\eta},\mathrm{t}_2)\Big];\mathrm{e}^{(1)}_{\mathrm{n}}\Big\rangle = 0.$
Moreover, using the same arguments, we have
\begin{align}\label{eq:incident}
    \Big\langle \widehat{\nabla u^{\mathrm{i}}};\mathrm{e}^{(1)}_{\mathrm{n}}\Big\rangle_{\mathbb{L}^{2}(\mathrm{B})} = 0,
\end{align}
and
\begin{align}
    \Big\langle \nabla\bm{\mathrm{M}}_{\mathrm{B}}^{\textcolor{black}{(0)}}[\widehat{\nabla u}];\mathrm{e}^{(1)}_{\mathrm{n}}\Big\rangle_{\mathbb{L}^{2}(\mathrm{B})} = 0.
\end{align}
Furthermore, due to the definition of the subspace $\mathbb{H}_{0}(\text{curl}\;0,\mathrm{B})$ and integration by parts, we get
\begin{align}\label{eq:2}
    (1+\alpha)\Big\langle \widehat{\nabla u};\mathrm{e}^{(2)}_{\mathrm{n}}\Big\rangle_{\mathbb{L}^{2}(\mathrm{B})} &=\nonumber  \Big\langle\widehat{\nabla u}^{\textbf{i}};\mathrm{e}^{(2)}_{\mathrm{n}}\Big\rangle_{\mathbb{L}^{2}(\mathrm{B})}-\beta\delta\Big\langle\nabla \bm{\mathrm{V}}_\mathrm{B}^\delta[\hat{u}_{\mathrm{t}\mathrm{t}}];\mathrm{e}^{(2)}_\mathrm{n} \Big\rangle_{\mathbb{L}^{2}(\mathrm{B})} + \alpha\rho_\mathrm{m}\delta^2\Big\langle \bm{\mathrm{P}}_\mathrm{B}^\delta\Big[\partial^2_\mathrm{t}\widehat{\nabla u}\Big];\ \mathrm{e}^{(2)}_\mathrm{n} \Big\rangle_{\mathbb{L}^{2}(\mathrm{B})}
    \\ &+ \beta\rho_\mathrm{m}\delta^3\Big\langle \bm{\mathrm{J}}_\mathrm{B}^\delta\Big[\partial^4_\mathrm{t}\widehat{ u}\Big];\ \mathrm{e}^{(2)}_\mathrm{n} \Big\rangle_{\mathbb{L}^{2}(\mathrm{B})}
\end{align}
which implies that
\begin{align}
    |\Big\langle \widehat{\nabla u};\mathrm{e}^{(2)}_{\mathrm{n}}\Big\rangle_{\mathbb{L}^{2}(\mathrm{B})}| &\lesssim \nonumber \frac{1}{1+\alpha} |\Big\langle\widehat{\nabla u}^{\textbf{i}};\mathrm{e}^{(2)}_{\mathrm{n}}\Big\rangle_{\mathbb{L}^{2}(\mathrm{B})}| + \frac{\beta}{1+\alpha}\delta|\Big\langle\nabla \bm{\mathrm{V}}_\mathrm{B}^\delta[\hat{u}_{\mathrm{t}\mathrm{t}}];\mathrm{e}^{(2)}_\mathrm{n} \Big\rangle_{\mathbb{L}^{2}(\mathrm{B})}| \\ &+ \frac{\alpha\rho_\mathrm{m}}{1+\alpha}\delta^2|\Big\langle \bm{\mathrm{P}}_\mathrm{B}^\delta\Big[\partial^2_\mathrm{t}\widehat{\nabla u}\Big];\ \mathrm{e}^{(2)}_\mathrm{n} \Big\rangle_{\mathbb{L}^{2}(\mathrm{B})}| + \frac{\beta\rho_\mathrm{m}}{1+\alpha}\delta^3|\Big\langle \bm{\mathrm{J}}_\mathrm{B}^\delta\Big[\partial^4_\mathrm{t}\widehat{u}\Big];\ \mathrm{e}^{(2)}_\mathrm{n} \Big\rangle_{\mathbb{L}^{2}(\mathrm{B})}|
\end{align}
Similarly, we derive the following estimate on the subspace $\nabla \mathrm{H}_\text{arm}$ 
\begin{align}
    |\Big\langle \widehat{\nabla u};\mathrm{e}^{(3)}_{\mathrm{n}}\Big\rangle_{\mathbb{L}^{2}(\mathrm{B})}| &\lesssim \nonumber \frac{1}{1+\alpha \lambda^3_\mathrm{n}} |\Big\langle\widehat{\nabla u}^{\textbf{i}};\mathrm{e}^{(3)}_{\mathrm{n}}\Big\rangle_{\mathbb{L}^{2}(\mathrm{B})}| + \frac{\beta}{1+\alpha\lambda^3_\mathrm{n}}\delta|\Big\langle\nabla \bm{\mathrm{V}}_\mathrm{B}^\delta[\hat{u}_{\mathrm{t}\mathrm{t}}];\mathrm{e}^{(3)}_\mathrm{n} \Big\rangle_{\mathbb{L}^{2}(\mathrm{B})}|\\ &+ \frac{\alpha\rho_\mathrm{m}}{1+\alpha\lambda^3_\mathrm{n}}\delta^2|\Big\langle \bm{\mathrm{P}}_\mathrm{B}^\delta\Big[\partial^2_\mathrm{t}\widehat{\nabla u}\Big];\ \mathrm{e}^{(3)}_\mathrm{n} \Big\rangle_{\mathbb{L}^{2}(\mathrm{B})}| + \frac{\beta\rho_\mathrm{m}}{1+\alpha\lambda^3_\mathrm{n}}\delta^2|\Big\langle \bm{\mathrm{J}}_\mathrm{B}^\delta\Big[\partial^4_\mathrm{t}\widehat{u}\Big];\ \mathrm{e}^{(3)}_\mathrm{n} \Big\rangle_{\mathbb{L}^{2}(\mathrm{B})}|
\end{align}
Therefore, due to Parseval's identity, the estimates $\rho_\mathrm{m}\sim1, \frac{\beta}{1+\alpha}\sim 1, \frac{\beta}{1+\alpha\lambda^3_\mathrm{n}} \sim 1, \frac{\alpha}{1+\alpha}\sim 1, \frac{\alpha}{1+\alpha\lambda^3_\mathrm{n}}\sim 1, \frac{\beta}{1+\alpha\lambda^3_\mathrm{n}}\sim 1\ \text{and}\  \frac{\beta}{1+\alpha}\sim 1$, and the continuity of the operators $\bm{\mathrm{V}}^{\textcolor{black}{\delta}}_{\textcolor{black}{\mathrm{B}}}$, $\bm{\bm{\mathrm{P}}}^{\textcolor{black}{\delta}}_{\textcolor{black}{\mathrm{B}}}$ and $\bm{\bm{\mathrm{J}}}^{\textcolor{black}{\delta}}_{\textcolor{black}{\mathrm{B}}}$, we obtain 
\begin{align}\label{en}
    \Vert \widehat{\nabla u}(\cdot,\mathrm{t})\Vert_{\mathbb{L}^{2}(\mathrm{B})}^2 &\lesssim \nonumber\delta^2 \Vert \nabla \bm{\mathrm{V}}_\mathrm{B}^\delta\Big[\hat{u}_{\mathrm{t}\mathrm{t}}(\cdot,\mathrm{t})\Big]\Vert_{\mathbb{L}^{2}(\mathrm{B})}^2 + \delta^4 \Vert \bm{\mathrm{P}}_\mathrm{B}^\delta\Big[\partial^2_\mathrm{t}\widehat{\nabla u}(\cdot,\mathrm{t})\Big]\Vert_{\mathbb{L}^{2}(\mathrm{B})}^2 +  \delta^6 \Vert \bm{\mathrm{J}}_\mathrm{B}^\delta\Big[\partial^4_\mathrm{t}\widehat{ u}(\cdot,\mathrm{t})\Big]\Vert_{\mathbb{L}^{2}(\mathrm{B})}^2
    \\ &\lesssim \nonumber \delta^2 \underbrace{\Vert \partial_\mathrm{t}^\mathrm{k}\hat{u}(\cdot,\mathrm{t})\Vert_{\mathbb{L}^2(\mathrm{B})}^2}_{\textcolor{black}{\sim 1}(\ref{esti2})} + \delta^4 \underbrace{\Vert \partial_\mathrm{t}^\mathrm{k}\widehat{\nabla u}(\cdot,\mathrm{t})\Vert_{\mathbb{L}^2(\mathrm{B})}^2}_{\textcolor{black}{\sim 1}(\ref{gradu})} + \delta^6 \underbrace{\Vert \partial_\mathrm{t}^\mathrm{k}\widehat{ u}(\cdot,\mathrm{t})\Vert_{\mathbb{L}^2(\mathrm{B})}^2}_{\textcolor{black}{\sim 1}(\ref{esti2})}
    \\ &\lesssim \delta^2.
\end{align}
As a result, we get
\begin{align}\label{en1}
    \Vert \nabla u(\cdot,\mathrm{t})\Vert_{\mathbb{L}^{2}(\Omega)} = \delta^{\frac{3}{2}}\Vert \widehat{\nabla u}(\cdot,\mathrm{t})\Vert_{\mathbb{L}^{2}(\mathrm{B})} \sim \delta^{\textcolor{black}{\frac{5}{2}}}.
\end{align}
Similarly, we derive the estimate
\begin{align}
\Vert \partial_\mathrm{t}^\mathrm{k}\nabla u(\cdot,\mathrm{t})\Vert_{\mathbb{L}^2(\Omega)} \lesssim \delta^{\textcolor{black}{\frac{5}{2}}}, \quad \mathrm{t}\in [0,\mathrm{T}], \quad \mathrm{k}=1,2,\cdot \cdot \cdot.
\end{align}
This gives the desired estimate and the proof is complete.
\end{proof}
\noindent
Now, we proceed to prove Proposition \ref{p3} and therefore, we start with recalling the norm definition as described in (\ref{normd1}) and (\ref{normd2})
\begin{align}\nonumber
    \Vert u\Vert_{\mathrm{H}^{\mathrm{s}}(\partial\Omega)} := \Vert u \Vert_{\mathrm{L}^2(\partial \Omega)}^2 + \int_{\partial\Omega}\int_{\partial\Omega}\frac{|u(\mathrm{x})-u(\mathrm{y})|^2}{|\mathrm{x}-\mathrm{y}|^{2+2\mathrm{s}}}d\sigma_\mathrm{x}d\sigma_\mathrm{y},\quad \text{where}\ \mathrm{s} = \frac{1}{2},
\end{align}
and
\begin{align}
 \Vert \varphi \Vert_{\mathrm{H}^{-\frac{1}{2}}(\partial\Omega)} &=\nonumber \sup_{0\neq u \in \mathrm{H}^{\frac{1}{2}}(\partial\Omega )} \dfrac{|\langle \varphi, u \rangle_{\partial \Omega}|}{\Vert u \Vert_{\mathrm{H}^{\frac{1}{2}}(\partial\Omega )}}.
 \end{align}
 We have the following scaling properties.
 \begin{lemma}
 Suppose $0< \delta \le 1$ and $\Omega=\delta\mathrm{B}+\mathrm{z}\subset \mathbb{R}^3$. Then for $u \in \mathrm{H}^{\frac{1}{2}}(\partial\Omega)$ and $\psi \in \mathrm{H}^{-\frac{1}{2}}(\partial\Omega)$ , we have
\begin{align}\label{l1}
    \delta \Vert \hat{u}\Vert_{\mathrm{H}^{\frac{1}{2}}(\partial\mathrm{B})} \le \Vert u\Vert_{\mathrm{H}^{\frac{1}{2}}(\partial\Omega)} \le \delta^\frac{1}{2} \Vert \hat{u}\Vert_{\mathrm{H}^{\frac{1}{2}}(\partial\mathrm{B})},
\end{align}
and
\begin{align}\label{ii}
    \delta^\frac{3}{2} \Vert \hat{\psi}\Vert_{\mathrm{H}^{-\frac{1}{2}}(\partial\mathrm{B})} \le \Vert \psi\Vert_{\mathrm{H}^{-\frac{1}{2}}(\partial\Omega)} \le \delta\Vert \hat{\psi}\Vert_{\mathrm{H}^{-\frac{1}{2}}(\partial\mathrm{B})}.
\end{align}
\end{lemma}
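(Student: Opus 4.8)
The plan is to prove both estimates by the single change of variables $\mathrm{x}=\delta\xi+\mathrm{z}$, $\mathrm{y}=\delta\eta+\mathrm{z}$ with $\xi,\eta\in\partial\mathrm{B}$, and simply to bookkeep the power of $\delta$ generated by each ingredient of the Slobodeckij norm (\ref{normd1}). Under this dilation one has $\mathrm{u}(\mathrm{x})=\hat{u}(\xi)$, $|\mathrm{x}-\mathrm{y}|=\delta|\xi-\eta|$, and, since $\partial\Omega=\delta\partial\mathrm{B}+\mathrm{z}$ is a two-dimensional surface in $\mathbb{R}^3$, the surface element scales as $d\sigma_\mathrm{x}=\delta^2 d\sigma_\xi$. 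These three facts are all that is needed; everything else is tracking exponents. I expect no genuine obstacle here, so the real content is organizing the two asymmetric inequalities so that they fall out cleanly from the constraint $0<\delta\le1$.

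First I would treat (\ref{l1}). The $\mathrm{L}^2$-part gives $\|u\|_{\mathrm{L}^2(\partial\Omega)}^2=\delta^2\|\hat{u}\|_{\mathrm{L}^2(\partial\mathrm{B})}^2$, while in the double-integral seminorm the denominator contributes $|\mathrm{x}-\mathrm{y}|^{2+2\mathrm{s}}=\delta^3|\xi-\eta|^3$ (with $\mathrm{s}=\tfrac12$) and the two surface elements contribute $\delta^4$, for a net factor $\delta^{4-3}=\delta$. Writing $[\hat u]^2$ for the seminorm of $\hat u$ on $\partial\mathrm{B}$, this yields $\|u\|_{\mathrm{H}^{1/2}(\partial\Omega)}^2=\delta^2\|\hat u\|_{\mathrm{L}^2(\partial\mathrm{B})}^2+\delta\,[\hat u]^2$. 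Because $0<\delta\le1$ forces $\delta^2\le\delta\le1$, I can sandwich this quantity between $\delta^2\big(\|\hat u\|_{\mathrm{L}^2(\partial\mathrm{B})}^2+[\hat u]^2\big)$ and $\delta\big(\|\hat u\|_{\mathrm{L}^2(\partial\mathrm{B})}^2+[\hat u]^2\big)$, i.e. between $\delta^2\|\hat u\|_{\mathrm{H}^{1/2}(\partial\mathrm{B})}^2$ and $\delta\|\hat u\|_{\mathrm{H}^{1/2}(\partial\mathrm{B})}^2$; taking square roots gives exactly (\ref{l1}).

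For (\ref{ii}) I would pass through the dual characterization (\ref{normd2}). The key preliminary is that the duality bracket, being the continuous extension of the $\mathrm{L}^2(\partial\Omega)$ pairing, inherits the same factor as the measure, namely $\langle\psi,u\rangle_{\partial\Omega}=\delta^2\langle\hat\psi,\hat u\rangle_{\partial\mathrm{B}}$; this should be checked first for $\mathrm{L}^2$ densities and then extended. Since $u\mapsto\hat u$ is a bijection from $\mathrm{H}^{1/2}(\partial\Omega)\setminus\{0\}$ onto $\mathrm{H}^{1/2}(\partial\mathrm{B})\setminus\{0\}$, the supremum in (\ref{normd2}) may be reindexed over $\hat u$, giving $\|\psi\|_{\mathrm{H}^{-1/2}(\partial\Omega)}=\delta^2\sup_{\hat u\neq0}|\langle\hat\psi,\hat u\rangle_{\partial\mathrm{B}}|/\|u\|_{\mathrm{H}^{1/2}(\partial\Omega)}$. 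Now I insert the two sides of (\ref{l1}): the lower bound $\|u\|_{\mathrm{H}^{1/2}(\partial\Omega)}\ge\delta\|\hat u\|_{\mathrm{H}^{1/2}(\partial\mathrm{B})}$ produces the upper estimate $\|\psi\|_{\mathrm{H}^{-1/2}(\partial\Omega)}\le\delta\,\|\hat\psi\|_{\mathrm{H}^{-1/2}(\partial\mathrm{B})}$, while the upper bound $\|u\|_{\mathrm{H}^{1/2}(\partial\Omega)}\le\delta^{1/2}\|\hat u\|_{\mathrm{H}^{1/2}(\partial\mathrm{B})}$ produces the lower estimate $\delta^{3/2}\|\hat\psi\|_{\mathrm{H}^{-1/2}(\partial\mathrm{B})}\le\|\psi\|_{\mathrm{H}^{-1/2}(\partial\Omega)}$. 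Combining the two gives (\ref{ii}). The only point deserving care, rather than a true obstacle, is the consistent reading of the bracket as the scaled $\mathrm{L}^2$-pairing; once that is fixed, the asymmetric exponents $\delta^{3/2}$ and $\delta$ are forced precisely by the asymmetry already present in (\ref{l1}), which itself comes only from $0<\delta\le1$.
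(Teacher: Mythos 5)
Your proof is correct and follows essentially the same route as the paper: the dilation $\mathrm{x}=\delta\xi+\mathrm{z}$ with $d\sigma_\mathrm{x}=\delta^2 d\sigma_\xi$ and $|\mathrm{x}-\mathrm{y}|^3=\delta^3|\xi-\eta|^3$ gives $\Vert u\Vert_{\mathrm{H}^{1/2}(\partial\Omega)}^2=\delta^2\Vert\hat u\Vert_{\mathrm{L}^2(\partial\mathrm{B})}^2+\delta\,[\hat u]^2$ and hence (\ref{l1}) via $\delta^2\le\delta$, and (\ref{ii}) is then obtained exactly as in the paper by scaling the duality pairing by $\delta^2$ and inserting the two sides of (\ref{l1}) into the denominator of the dual norm. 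Your extra remark about first checking the bracket scaling on $\mathrm{L}^2$ densities is a small added precision, not a departure from the paper's argument.
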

 \begin{proof}
 Let us first consider $\mathrm{x}=\delta\xi + \mathrm{z}$ and $\mathrm{y}=\delta\eta+\mathrm{z}$. Then for $u \in \mathrm{H}^{\frac{1}{2}}(\partial\Omega)$ we have
\begin{align}
    \Vert u\Vert_{\mathrm{H}^{\frac{1}{2}}(\partial\Omega)}^2 &:=\nonumber \Vert u \Vert_{\mathrm{L}^2(\partial \Omega)}^2 + \int_{\partial\Omega}\int_{\partial\Omega}\frac{|u(\mathrm{x})-u(\mathrm{y})|^2}{|\mathrm{x}-\mathrm{y}|^3}d\sigma_\mathrm{x}d\sigma_\mathrm{y}
    \\ \nonumber &= \delta^2\int_{\partial\mathrm{B}} |u(\delta\mathrm{\xi}+\mathrm{z})|^2 d\sigma_\mathrm{\xi} + \delta^4\int_{\partial\mathrm{B}}\int_{\partial\mathrm{B}}\frac{|u(\delta\mathrm{\xi}+\mathrm{z})-u(\delta\mathrm{\eta}+\mathrm{z})|^2}{\delta^3|\mathrm{\xi}-\mathrm{\eta}|^3}d\sigma_\mathrm{\xi}d\sigma_\mathrm{\eta}
    \\ \nonumber &= \delta^2 \Vert \hat{u} \Vert_{\mathrm{L}^2(\partial \mathrm{B})}^2 + \delta \int_{\partial\mathrm{B}}\int_{\partial\mathrm{B}}\frac{|\hat{u}(\mathrm{\xi})-\hat{u}(\mathrm{\eta})|^2}{|\mathrm{\xi}-\mathrm{\eta}|^3}d\sigma_\mathrm{\xi}d\sigma_\mathrm{\eta}.
\end{align}
Consequently, due to the fact $\delta^2\le \delta$ it implies (\ref{l1}). Then using the definition of dual norm (\ref{normd2}), we have for $\psi \in \mathrm{H}^{-\frac{1}{2}}(\partial\Omega)$
\begin{align}
 \Vert \psi \Vert_{\mathrm{H}^{-\frac{1}{2}}(\partial\Omega)} &=\nonumber \sup_{0\neq \varphi \in \mathrm{H}^{\frac{1}{2}}(\partial\Omega )} \dfrac{|\langle \psi,\varphi \rangle_{\partial \Omega}|}{\Vert \varphi \Vert_{\mathrm{H}^{\frac{1}{2}}(\partial\Omega )}}
 \\ \nonumber &\le \nonumber \sup_{0\neq \varphi \in \mathrm{H}^{\frac{1}{2}}(\partial\mathrm{B} )} \dfrac{\delta^2|\langle \hat{\psi},\hat{\varphi} \rangle_{\partial \Omega}|}{\delta\Vert \hat{\varphi} \Vert_{\mathrm{H}^{\frac{1}{2}}(\partial\mathrm{B} )}}
 \\ & \le \delta \Vert \hat{\psi} \Vert_{\mathrm{H}^{-\frac{1}{2}}(\partial\mathrm{B})},
\end{align}
and
\begin{align}
 \Vert \psi \Vert_{\mathrm{H}^{-\frac{1}{2}}(\partial\Omega)} &=\nonumber \sup_{0\neq \varphi \in \mathrm{H}^{\frac{1}{2}}(\partial\Omega )} \dfrac{|\langle \psi,\varphi \rangle_{\partial \Omega}|}{\Vert \varphi \Vert_{\mathrm{H}^{\frac{1}{2}}(\partial\Omega )}}
 \\ \nonumber &\ge \nonumber \sup_{0\neq \varphi \in \mathrm{H}^{\frac{1}{2}}(\partial\mathrm{B} )} \dfrac{\delta^2|\langle \hat{\psi},\hat{\varphi} \rangle_{\partial \Omega}|}{\delta^\frac{1}{2}\Vert \hat{\varphi} \Vert_{\mathrm{H}^{\frac{1}{2}}(\partial\mathrm{B} )}}
 \\ & \ge \delta^\frac{3}{2} \Vert \hat{\psi} \Vert_{\mathrm{H}^{-\frac{1}{2}}(\partial\mathrm{B})}.
\end{align}
Therefore, the above two estimates leads to (\ref{ii}). 
\end{proof}

\begin{lemma}
Suppose $0< \delta \le 1$. Then for $\partial_\nu u \in \mathrm{H}^{-\frac{1}{2}}(\partial\Omega)$, we have
\begin{align}\label{l3}
    \delta^\frac{1}{2} \Vert \partial_\nu\hat{u}\Vert_{\mathrm{H}^{-\frac{1}{2}}(\partial\mathrm{B})} \le \Vert \partial_\nu u\Vert_{\mathrm{H}^{-\frac{1}{2}}(\partial\Omega)} \le \Vert \partial_\nu\hat{u}\Vert_{\mathrm{H}^{-\frac{1}{2}}(\partial\mathrm{B})}.
\end{align}
\end{lemma}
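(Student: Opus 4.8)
The plan is to reduce the claim to the general dual-norm scaling already established in the previous lemma, inequality (\ref{ii}), by carefully distinguishing between the Neumann trace transported to $\partial\mathrm{B}$ and the Neumann trace of the transported function. Throughout, the time variable is frozen, so I work with functions of the spatial variable alone and write $\hat{\psi}(\xi):=\psi(\delta\xi+\mathrm{z})$ for a function $\psi$ defined on $\partial\Omega$.

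First I would record the chain-rule relation between $\partial_\nu\hat{u}$ and $\widehat{\partial_\nu u}$. Since $\hat{u}(\xi)=u(\delta\xi+\mathrm{z})$, the gradients satisfy $\nabla_\xi\hat{u}(\xi)=\delta\,(\nabla u)(\delta\xi+\mathrm{z})$. Because the map $\xi\mapsto\delta\xi+\mathrm{z}$ is a translation composed with a positive dilation, it sends the outward unit normal of $\mathrm{B}$ at $\xi$ to the outward unit normal of $\Omega$ at $x=\delta\xi+\mathrm{z}$; contracting the gradient identity with $\nu$ therefore gives $\partial_\nu\hat{u}(\xi)=\delta\,(\partial_\nu u)(\delta\xi+\mathrm{z})=\delta\,\widehat{\partial_\nu u}(\xi)$, that is, $\widehat{\partial_\nu u}=\delta^{-1}\,\partial_\nu\hat{u}$ on $\partial\mathrm{B}$.

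Next I would apply (\ref{ii}) to the particular element $\psi:=\partial_\nu u\in\mathrm{H}^{-\frac{1}{2}}(\partial\Omega)$, whose transported version is $\hat{\psi}=\widehat{\partial_\nu u}$. This yields $\delta^{\frac{3}{2}}\Vert\widehat{\partial_\nu u}\Vert_{\mathrm{H}^{-\frac{1}{2}}(\partial\mathrm{B})}\le\Vert\partial_\nu u\Vert_{\mathrm{H}^{-\frac{1}{2}}(\partial\Omega)}\le\delta\Vert\widehat{\partial_\nu u}\Vert_{\mathrm{H}^{-\frac{1}{2}}(\partial\mathrm{B})}$. Substituting the identity $\widehat{\partial_\nu u}=\delta^{-1}\partial_\nu\hat{u}$ from the previous step and using the homogeneity of the norm, both the lower factor $\delta^{3/2}$ and the upper factor $\delta$ acquire an extra $\delta^{-1}$, producing exactly the factors $\delta^{1/2}$ and $1$, which is precisely (\ref{l3}).

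The only genuinely delicate point is the bookkeeping in the second and third steps: one must not conflate $\partial_\nu\hat{u}$ (taking the normal derivative after rescaling) with $\widehat{\partial_\nu u}$ (rescaling the already-computed Neumann trace), since these differ exactly by the factor $\delta$ coming from the chain rule, and it is this factor that shifts the exponents from $(\tfrac32,1)$ in (\ref{ii}) to $(\tfrac12,0)$ here. Once the distinction is in place, the estimate is a one-line consequence of (\ref{ii}). As an independent check, one could instead unwind the dual-norm definition (\ref{normd2}) directly, bounding $|\langle\partial_\nu u,\varphi\rangle_{\partial\Omega}|$ after the change of variables and invoking the $\mathrm{H}^{\frac12}$ scaling (\ref{l1}); this reproduces the same powers of $\delta$ and confirms the sharpness of both inequalities.
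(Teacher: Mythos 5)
Your proof is correct, and it reaches (\ref{l3}) by a different (more modular) route than the paper. You factor the argument into two clean pieces: the chain-rule identity $\partial_\nu\hat{u}(\xi)=\delta\,(\partial_\nu u)(\delta\xi+\mathrm{z})$, i.e. $\widehat{\partial_\nu u}=\delta^{-1}\partial_\nu\hat{u}$ (valid because the dilation $\xi\mapsto\delta\xi+\mathrm{z}$ carries the outward normal of $\mathrm{B}$ to that of $\Omega$), and then the already-proved two-sided scaling (\ref{ii}) applied to $\psi=\partial_\nu u$, with norm homogeneity converting the exponent pair $(\tfrac32,1)$ into $(\tfrac12,0)$. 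The paper instead redoes the dual-norm computation from scratch: it writes $\Vert\partial_\nu u\Vert_{\mathrm{H}^{-\frac{1}{2}}(\partial\Omega)}$ as the supremum (\ref{normd2}), changes variables inside the pairing $\langle\nabla_\mathrm{y}u\cdot\nu_\mathrm{y},\varphi\rangle_{\partial\Omega}$ --- producing the combined factor $\delta^2\cdot\delta^{-1}$, where the $\delta^{-1}$ is precisely your chain-rule factor --- and then bounds the denominator using the two sides of the $\mathrm{H}^{\frac{1}{2}}$ scaling (\ref{l1}). So both arguments rest on the same two ingredients (change of variables in the duality pairing plus test-function scaling); yours reuses the previous lemma and makes the source of the exponent shift explicit, while the paper's is self-contained at the cost of repeating the computation that underlies (\ref{ii}). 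One caveat applies equally to both: $\partial_\nu u$ is only an element of $\mathrm{H}^{-\frac{1}{2}}(\partial\Omega)$, so the pointwise chain rule and the change of variables in the pairing must be read in the weak sense, e.g. by defining $\langle\widehat{\partial_\nu u},\hat\varphi\rangle_{\partial\mathrm{B}}:=\delta^{-2}\langle\partial_\nu u,\varphi\rangle_{\partial\Omega}$; with that convention your identification $\widehat{\partial_\nu u}=\delta^{-1}\partial_\nu\hat{u}$ is exactly the identity the paper verifies inside its displayed computation, so your appeal to (\ref{ii}) is at the same level of rigor as the paper's direct argument.
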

\begin{proof}
Now, we start with the following calculation
\begin{align}
 \Vert \partial_\nu u\Vert_{\mathrm{H}^{-\frac{1}{2}}(\partial\Omega)} 
 &=\nonumber \sup_{0\neq \varphi \in \mathrm{H}^{\frac{1}{2}}(\partial\Omega )} \dfrac{|\langle\partial_\nu u,\varphi \rangle_{\partial \Omega}|}{\Vert \varphi \Vert_{\mathrm{H}^{\frac{1}{2}}(\partial\Omega )}}
 \\ \nonumber &= \sup_{0\neq \varphi \in \mathrm{H}^{\frac{1}{2}}(\partial\Omega )} \dfrac{|\langle\nabla_\mathrm{y} u(\mathrm{y}) \cdot \nu_\mathrm{y},\varphi \rangle_{\partial \Omega}|}{\Vert \varphi \Vert_{\mathrm{H}^{\frac{1}{2}}(\partial\Omega )}}
 \\ \nonumber &= \sup_{0\neq \varphi \in \mathrm{H}^{\frac{1}{2}}(\partial\Omega )} \dfrac{\delta^2|\delta^{-1}\Big\langle\nabla_\mathrm{\eta} u(\delta\eta+\mathrm{z}) \cdot \nu_\mathrm{\eta},\varphi(\delta\eta+\mathrm{z}) \Big\rangle_{\partial \mathrm{B}}|}{\delta\Vert \varphi(\delta\eta+\mathrm{z}) \Vert_{\mathrm{H}^{\frac{1}{2}}(\partial\mathrm{B} )}}
 \\ \nonumber &\le \nonumber \sup_{0\neq \varphi \in \mathrm{H}^{\frac{1}{2}}(\partial\mathrm{B} )} \dfrac{\delta|\langle \partial_\nu\hat{u},\hat{\varphi} \rangle_{\partial \Omega}|}{\underbrace{\delta\Vert \hat{\varphi} \Vert_{\mathrm{H}^{\frac{1}{2}}(\partial\mathrm{B} )}}_{(\ref{l1})}} 
 \\ &=  \Vert \partial_\nu\hat{u} \Vert_{\mathrm{H}^{-\frac{1}{2}}(\partial\mathrm{B})},
\end{align}
and
\begin{align}
 \Vert \partial_\nu u\Vert_{\mathrm{H}^{-\frac{1}{2}}(\partial\Omega)} 
 &=\nonumber \sup_{0\neq \varphi \in \mathrm{H}^{\frac{1}{2}}(\partial\Omega )} \dfrac{|\langle\partial_\nu u,\varphi \rangle_{\partial \Omega}|}{\Vert \varphi \Vert_{\mathrm{H}^{\frac{1}{2}}(\partial\Omega )}}
 \\ \nonumber &= \sup_{0\neq \varphi \in \mathrm{H}^{\frac{1}{2}}(\partial\Omega )} \dfrac{|\langle\nabla_\mathrm{y} u(\mathrm{y}) \cdot \nu_\mathrm{y},\varphi \rangle_{\partial \Omega}|}{\Vert \varphi \Vert_{\mathrm{H}^{\frac{1}{2}}(\partial\Omega )}}
 \\ \nonumber &= \sup_{0\neq \varphi \in \mathrm{H}^{\frac{1}{2}}(\partial\Omega )} \dfrac{\delta^2\Big|\delta^{-1}\Big\langle\nabla_\mathrm{\eta} u(\delta\eta+\mathrm{z}) \cdot \nu_\mathrm{\eta},\varphi(\delta\eta+\mathrm{z}) \Big\rangle_{\partial \mathrm{B}}\big|}{\delta^\frac{1}{2}\Vert \varphi(\delta\eta+\mathrm{z}) \Vert_{\mathrm{H}^{\frac{1}{2}}(\partial\mathrm{B} )}}
 \\ \nonumber &\ge \nonumber \sup_{0\neq \varphi \in \mathrm{H}^{\frac{1}{2}}(\partial\mathrm{B} )} \dfrac{\delta|\langle \partial_\nu\hat{u},\hat{\varphi} \rangle_{\partial \Omega}|}{\underbrace{\delta^\frac{1}{2}\Vert \hat{\varphi} \Vert_{\mathrm{H}^{\frac{1}{2}}(\partial\mathrm{B} )}}_{(\ref{l1})}}
 \\ &= \delta^\frac{1}{2}\Vert \partial_\nu\hat{u} \Vert_{\mathrm{H}^{-\frac{1}{2}}(\partial\mathrm{B})}.
\end{align}
Therefore, using the above two estimates we obtain the desired inequality (\ref{l3}).
\end{proof}
\bigbreak
\noindent
We state the following scaling result.
\begin{lemma}\cite{sini}
Suppose $0<\delta\le1$. For $u \in \mathrm{H}^\mathrm{r}_{0,\sigma}(0,\mathrm{T};\mathrm{L}^2(\Omega))$ with non-negative integer r, we have
\begin{align}\label{l7}
    \Vert u \Vert_{\mathrm{H}^\mathrm{r}_{0,\sigma}(0,\mathrm{T};\mathrm{L}^2(\Omega))} = \delta^2 \Vert \hat{u} \Vert_{\mathrm{H}^\mathrm{r}_{0,\delta\sigma}(0,\mathrm{T}_\delta;\mathrm{L}^2(\mathrm{B}))}.
\end{align}
\end{lemma}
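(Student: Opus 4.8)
The plan is to prove \eqref{l7} as a pure change-of-variables identity, performing the spatial substitution $\mathrm{x}=\delta\xi+\mathrm{z}$ and the temporal substitution $\tau=\mathrm{t}/\delta$ while tracking every power of $\delta$ that each one generates. First I would unfold the definition of the weighted norm,
$$\Vert u\Vert^2_{\mathrm{H}^\mathrm{r}_{0,\sigma}(0,\mathrm{T};\mathrm{L}^2(\Omega))} = \int_0^\mathrm{T}\mathrm{e}^{-2\sigma\mathrm{t}}\Big[\Vert u(\cdot,\mathrm{t})\Vert^2_{\mathrm{L}^2(\Omega)} + \sum_{\mathrm{k}=1}^\mathrm{r}\mathrm{T}^{2\mathrm{k}}\Vert\partial_\mathrm{t}^\mathrm{k}u(\cdot,\mathrm{t})\Vert^2_{\mathrm{L}^2(\Omega)}\Big]d\mathrm{t},$$
and treat the spatial norms fibrewise in $\mathrm{t}$ before touching the time integral.

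Next I would rescale space. Since $u(\delta\xi+\mathrm{z},\mathrm{t})=\hat{u}(\xi,\mathrm{t}/\delta)$ and $d\mathrm{x}=\delta^3 d\xi$, the substitution $\mathrm{x}=\delta\xi+\mathrm{z}$ gives $\Vert u(\cdot,\mathrm{t})\Vert^2_{\mathrm{L}^2(\Omega)}=\delta^3\Vert\hat{u}(\cdot,\mathrm{t}/\delta)\Vert^2_{\mathrm{L}^2(\mathrm{B})}$. For the temporal derivatives the scaling relation \eqref{ss2} yields $\partial_\mathrm{t}^\mathrm{k}u(\mathrm{x},\mathrm{t})=\delta^{-\mathrm{k}}(\partial_\tau^\mathrm{k}\hat{u})\big((\mathrm{x}-\mathrm{z})/\delta,\mathrm{t}/\delta\big)$, so that $\Vert\partial_\mathrm{t}^\mathrm{k}u(\cdot,\mathrm{t})\Vert^2_{\mathrm{L}^2(\Omega)}=\delta^{3-2\mathrm{k}}\Vert\partial_\tau^\mathrm{k}\hat{u}(\cdot,\mathrm{t}/\delta)\Vert^2_{\mathrm{L}^2(\mathrm{B})}$, the extra $\delta^{-2\mathrm{k}}$ coming from the $\mathrm{k}$-fold time differentiation.

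Then I would rescale time via $\tau=\mathrm{t}/\delta$, so that $d\mathrm{t}=\delta\,d\tau$, the upper limit $\mathrm{T}$ becomes $\mathrm{T}_\delta=\mathrm{T}/\delta$, and the weight $\mathrm{e}^{-2\sigma\mathrm{t}}$ becomes $\mathrm{e}^{-2(\delta\sigma)\tau}$. The decisive bookkeeping step is that the weight $\mathrm{T}^{2\mathrm{k}}$ attached to the $\mathrm{k}$-th derivative combines with the factor $\delta^{-2\mathrm{k}}$ from the derivative rescaling to produce $\mathrm{T}^{2\mathrm{k}}\delta^{-2\mathrm{k}}=(\mathrm{T}/\delta)^{2\mathrm{k}}=\mathrm{T}_\delta^{2\mathrm{k}}$, which is precisely the weight of the target norm over $(0,\mathrm{T}_\delta)$. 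Collecting the remaining $\delta$-powers $\delta^3\cdot\delta=\delta^4$, I obtain
$$\Vert u\Vert^2_{\mathrm{H}^\mathrm{r}_{0,\sigma}(0,\mathrm{T};\mathrm{L}^2(\Omega))} = \delta^4\int_0^{\mathrm{T}_\delta}\mathrm{e}^{-2(\delta\sigma)\tau}\Big[\Vert\hat{u}(\cdot,\tau)\Vert^2_{\mathrm{L}^2(\mathrm{B})}+\sum_{\mathrm{k}=1}^\mathrm{r}\mathrm{T}_\delta^{2\mathrm{k}}\Vert\partial_\tau^\mathrm{k}\hat{u}(\cdot,\tau)\Vert^2_{\mathrm{L}^2(\mathrm{B})}\Big]d\tau = \delta^4\Vert\hat{u}\Vert^2_{\mathrm{H}^\mathrm{r}_{0,\delta\sigma}(0,\mathrm{T}_\delta;\mathrm{L}^2(\mathrm{B}))},$$
and taking square roots gives \eqref{l7}.

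Being a change-of-variables computation, there is no serious analytic obstacle; the only point requiring care is combining the three independent sources of $\delta$-powers consistently — the volume element $\delta^3$, the $\mathrm{k}$-fold time-derivative factor $\delta^{-2\mathrm{k}}$, and the Jacobian $\delta$ of the time substitution — and, most importantly, verifying that the product $\mathrm{T}^{2\mathrm{k}}\delta^{-2\mathrm{k}}$ reassembles exactly into the weight $\mathrm{T}_\delta^{2\mathrm{k}}$ of the scaled norm while the decay parameter correctly transforms from $\sigma$ to $\delta\sigma$.
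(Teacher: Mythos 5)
Your proof is correct: the substitution $\mathrm{x}=\delta\xi+\mathrm{z}$, $\mathrm{t}=\delta\tau$ produces the volume factor $\delta^3$, the time Jacobian $\delta$, and the key cancellation $\mathrm{T}^{2\mathrm{k}}\delta^{-2\mathrm{k}}=\mathrm{T}_\delta^{2\mathrm{k}}$ together with $\mathrm{e}^{-2\sigma\mathrm{t}}=\mathrm{e}^{-2(\delta\sigma)\tau}$, giving $\delta^4$ inside the square and hence the stated factor $\delta^2$. The paper states this lemma by citation to \cite{sini} without reproducing a proof, but its proofs of the companion scaling lemmas for $\nabla u$ and $\Delta u$ (Lemmas \ref{ll9} and \ref{ll10}) carry out exactly this change-of-variables bookkeeping, so your argument is essentially the same approach.
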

\noindent
We also know that
\begin{align}\label{l11}
    \Vert \nabla u \Vert_{\mathbb{L}^2(\Omega)} = \delta^\frac{1}{2}\Vert \nabla \hat{u} \Vert_{\mathbb{L}^2(\mathrm{B})} \quad \text{and} \quad  \Vert \nabla u \Vert_{\mathbb{L}^2(\Omega)} = \delta^\frac{3}{2}\Vert \widehat{\nabla u} \Vert_{\mathbb{L}^2(\mathrm{B})}.
\end{align}
Moreover, from the above two estimate we obtain
\begin{align}\label{l8}
    \Vert \widehat{\nabla u} \Vert_{\mathbb{L}^2(\mathrm{B})} = \delta^{-1} \Vert \nabla \hat{u} \Vert_{\mathbb{L}^2(\mathrm{B})}.
\end{align}
\noindent
Then we have the following lemma.
\begin{lemma}\label{ll9}
Suppose $0<\delta\le1$. For $\nabla u \in \mathrm{H}^\mathrm{r}_{0,\sigma}(0,\mathrm{T};\mathbb{L}^2(\Omega))$ with non-negative integer r, we have 
\begin{align}\label{l9}
 \Vert\nabla u \Vert_{\mathrm{H}_{0,\sigma}^\mathrm{r}\big(0,\mathrm{T};\mathbb{L}^2(\Omega)\big)}
 = \delta\Vert \nabla\hat{u} \Vert_{\mathrm{H}_{0,\delta\sigma}^\mathrm{r}\big(0,\mathrm{T}_\delta;\mathbb{L}^2(\mathrm{B})\big)}.
\end{align}
\end{lemma}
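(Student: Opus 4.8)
The plan is to prove \eqref{l9} by a direct scaling computation, entirely parallel to the scalar statement \eqref{l7} of \cite{sini}; the only genuinely new ingredient is the extra factor of $\delta$ produced by the chain rule when the spatial gradient is pulled back to the reference domain $\mathrm{B}$. The three mechanisms I would combine are: the pointwise-in-time spatial scaling of the gradient recorded in \eqref{l11}; the time-derivative scaling \eqref{ss2}; and the change of time variable $\mathrm{t}=\delta\tau$, which simultaneously rescales the integration interval $(0,\mathrm{T})\mapsto(0,\mathrm{T}_\delta)$, turns the weight $e^{-2\sigma\mathrm{t}}$ into $e^{-2(\delta\sigma)\tau}$ (this is exactly why the target norm carries the parameter $\delta\sigma$), and contributes a Jacobian $d\mathrm{t}=\delta\,d\tau$.

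First I would unfold the definition of the norm, writing $\Vert\nabla u\Vert_{\mathrm{H}^{\mathrm{r}}_{0,\sigma}(0,\mathrm{T};\mathbb{L}^2(\Omega))}^2$ as the integral over $(0,\mathrm{T})$ of $e^{-2\sigma\mathrm{t}}\big[\Vert\nabla u\Vert_{\mathbb{L}^2(\Omega)}^2+\sum_{\mathrm{k}=1}^{\mathrm{r}}\mathrm{T}^{2\mathrm{k}}\Vert\partial_\mathrm{t}^\mathrm{k}\nabla u\Vert_{\mathbb{L}^2(\Omega)}^2\big]$. The key pointwise identity I would establish, for each $\mathrm{k}$ and with $\tau=\mathrm{t}/\delta$, is
\begin{align}\nonumber
\Vert\partial_\mathrm{t}^\mathrm{k}\nabla u(\cdot,\mathrm{t})\Vert_{\mathbb{L}^2(\Omega)}=\delta^{\frac{1}{2}-\mathrm{k}}\,\Vert\partial_\tau^\mathrm{k}\nabla\hat{u}(\cdot,\tau)\Vert_{\mathbb{L}^2(\mathrm{B})}.
\end{align}
This follows by first noting $\nabla_\xi\hat{u}=\delta\,(\nabla u)^\wedge$ (chain rule), so that $\partial_\tau^\mathrm{k}\nabla_\xi\hat{u}=\delta^{\mathrm{k}+1}(\partial_\mathrm{t}^\mathrm{k}\nabla u)^\wedge$ by \eqref{ss2}, and then applying the volume change of variables $\mathrm{x}=\delta\xi+\mathrm{z}$, which gives $\Vert f^\wedge\Vert_{\mathbb{L}^2(\mathrm{B})}=\delta^{-3/2}\Vert f\Vert_{\mathbb{L}^2(\Omega)}$; for $\mathrm{k}=0$ this reduces to the first identity in \eqref{l11}.

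Then I would substitute this identity together with $\mathrm{T}^{2\mathrm{k}}=(\delta\mathrm{T}_\delta)^{2\mathrm{k}}=\delta^{2\mathrm{k}}\mathrm{T}_\delta^{2\mathrm{k}}$. The decisive cancellation is that the $\delta^{2\mathrm{k}}$ coming from the weight exactly kills the $\delta^{-2\mathrm{k}}$ coming from the $\mathrm{k}$-fold time derivative, so that every summand acquires the same prefactor $\delta=\delta^{2\cdot\frac{1}{2}}$, uniformly in $\mathrm{k}$. The bracket therefore becomes $\delta\big[\Vert\nabla\hat{u}\Vert_{\mathbb{L}^2(\mathrm{B})}^2+\sum_{\mathrm{k}=1}^{\mathrm{r}}\mathrm{T}_\delta^{2\mathrm{k}}\Vert\partial_\tau^\mathrm{k}\nabla\hat{u}\Vert_{\mathbb{L}^2(\mathrm{B})}^2\big]$, and performing $\mathrm{t}=\delta\tau$ yields a further $\delta$ from $d\mathrm{t}=\delta\,d\tau$ while converting $e^{-2\sigma\mathrm{t}}$ into $e^{-2(\delta\sigma)\tau}$ and the limits into $(0,\mathrm{T}_\delta)$. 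Collecting, the whole expression equals $\delta^2\Vert\nabla\hat{u}\Vert_{\mathrm{H}^{\mathrm{r}}_{0,\delta\sigma}(0,\mathrm{T}_\delta;\mathbb{L}^2(\mathrm{B}))}^2$, and taking square roots gives \eqref{l9}.

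There is no genuine analytic obstacle here; the only thing that must be handled with care is the exponent bookkeeping, and specifically the coherent scaling of the three $\delta$-dependent ingredients — the weight $\mathrm{T}^{2\mathrm{k}}$, the order-$\mathrm{k}$ time derivative, and the time Jacobian. The point worth emphasizing is that the cancellation of $\delta^{2\mathrm{k}}$ against $\delta^{-2\mathrm{k}}$ is precisely what makes the prefactor $\delta$ independent of the Sobolev order $\mathrm{r}$; had the norm been defined with unweighted $\partial_\tau^\mathrm{k}$ terms rather than the $\mathrm{T}^{2\mathrm{k}}$-weighted ones, the scaling exponent would depend on $\mathrm{r}$ and the clean identity \eqref{l9} would fail.
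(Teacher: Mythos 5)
Your proposal is correct and follows essentially the same route as the paper's proof: unfold the weighted norm, apply the chain-rule/Jacobian scaling identities (the paper's (\ref{ss2}), (\ref{l11}), (\ref{l8})) to each summand, use $\mathrm{T}^{2\mathrm{k}}=\delta^{2\mathrm{k}}\mathrm{T}_\delta^{2\mathrm{k}}$ to cancel the $\delta^{-2\mathrm{k}}$ from the time derivatives, and change the time variable $\mathrm{t}=\delta\tau$ to collect the overall factor $\delta^2$ before taking square roots. The only cosmetic difference is that you fold the passage through $\widehat{\nabla u}$ and (\ref{l8}) into a single pointwise identity $\Vert\partial_\mathrm{t}^\mathrm{k}\nabla u(\cdot,\mathrm{t})\Vert_{\mathbb{L}^2(\Omega)}=\delta^{\frac{1}{2}-\mathrm{k}}\Vert\partial_\tau^\mathrm{k}\nabla\hat{u}(\cdot,\tau)\Vert_{\mathbb{L}^2(\mathrm{B})}$, which is a clean repackaging of the same computation.
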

\begin{proof}
Due to (\ref{ss2}), (\ref{l11}) and (\ref{l8}), we can easily derive that
\begin{align}
    \Vert\nabla u \Vert_{\mathrm{H}_{0,\sigma}^\mathrm{r}\big(0,\mathrm{T};\mathbb{L}^2(\Omega)\big)}^2 &= \nonumber \int_{0}^\mathrm{T}\mathrm{e}^{-2\sigma \mathrm{t}}\sum_{\mathrm{k}=0}^\mathrm{r} \mathrm{T}^{2\mathrm{k}}\Big\Vert \frac{\partial^\mathrm{k}\nabla u(\cdot,\mathrm{t})}{\partial\mathrm{t}^\mathrm{k}}\Big\Vert^2_{\mathbb{L}^2(\Omega)}d\mathrm{t}
    \\ \nonumber &\le \int_{0}^\mathrm{T_\delta}\mathrm{e}^{-2\sigma \delta\mathrm{\tau}}\sum_{\mathrm{k}=0}^\mathrm{r} \cancel{\delta^{2\mathrm{k}}}\mathrm{T}_\delta^{2\mathrm{k}}\underbrace{\cancel{\delta^{-2\mathrm{k}}} \delta^3\Big\Vert \frac{\partial^\mathrm{k}\widehat{\nabla u}(\cdot,\mathrm{\tau})}{\partial\mathrm{\tau}^\mathrm{k}}\Big\Vert^2_{\mathbb{L}^2(\mathrm{B})}}_{\text{Due to}\ (\ref{ss2})\ \text{and} \ (\ref{l11})}\delta d\mathrm{\tau}
    \\ \nonumber &\le \delta^4\int_{0}^\mathrm{T_\delta}\mathrm{e}^{-2\sigma \delta\mathrm{\tau}}\sum_{\mathrm{k}=0}^\mathrm{r}\mathrm{T}_\delta^{2\mathrm{k}}\underbrace{\delta^{-2}\Big\Vert \frac{\partial^\mathrm{k}\nabla \widehat{u}(\cdot,\mathrm{\tau})}{\partial\mathrm{\tau}^\mathrm{k}}\Big\Vert^2_{\mathbb{L}^2(\mathrm{B})}}_{(\ref{l8})}d\mathrm{\tau}
    \\ &= \nonumber\delta^2\Vert \nabla\hat{u} \Vert_{\mathrm{H}_{0,\delta\sigma}^\mathrm{r}\big(0,\mathrm{T}_\delta;\mathbb{L}^2(\mathrm{B})\big)}^2.
\end{align}
The proof is thus complete.
\end{proof}
\noindent
We also observe that
\begin{align}\label{l12}
    \Vert \Delta u \Vert_{\mathrm{L}^2(\Omega)} = \delta^{-\frac{1}{2}}\Vert \Delta \hat{u} \Vert_{\mathrm{L}^2(\mathrm{B})} \quad \text{and} \quad  \Vert \Delta u \Vert_{\mathrm{L}^2(\Omega)} = \delta^\frac{3}{2}\Vert \widehat{\Delta u} \Vert_{\mathrm{L}^2(\mathrm{B})}.
\end{align}
Moreover, from the above two estimate we deduce
\begin{align}\label{l10}
    \Vert \widehat{\Delta u} \Vert_{\mathrm{L}^2(\mathrm{B})} = \delta^{-2} \Vert \Delta \hat{u} \Vert_{\mathrm{L}^2(\mathrm{B})}.
\end{align}
So, we state the following lemma.
\begin{lemma}\label{ll10}
Suppose $0<\delta\le1$. For $\Delta u \in \mathrm{H}^\mathrm{r}_{0,\sigma}(0,\mathrm{T};\mathrm{L}^2(\Omega))$ with non-negative integer r, we have 
\begin{align}\label{l13}
 \Vert\Delta u \Vert_{\mathrm{H}_{0,\sigma}^\mathrm{r}\big(0,\mathrm{T};\mathrm{L}^2(\Omega)\big)}
 = \Vert \Delta\hat{u} \Vert_{\mathrm{H}_{0,\delta\sigma}^\mathrm{r}\big(0,\mathrm{T}_\delta;\mathrm{L}^2(\mathrm{B})\big)}.
\end{align}
\end{lemma}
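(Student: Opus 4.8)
The plan is to establish \Cref{ll10} by the same change-of-variables argument used for \Cref{ll9}, the only difference being that the gradient-scaling relation (\ref{l8}) is replaced throughout by the Laplacian-scaling relation (\ref{l10}). I would start from the definition of the weighted norm, apply the temporal substitution $\mathrm{t}=\delta\tau$ (so that $\mathrm{d}\mathrm{t}=\delta\,\mathrm{d}\tau$, the upper limit $\mathrm{T}$ becomes $\mathrm{T}_\delta=\mathrm{T}/\delta$, and the weight transforms as $\mathrm{e}^{-2\sigma\mathrm{t}}=\mathrm{e}^{-2(\delta\sigma)\tau}$, which is exactly the weight of the scaled space $\mathrm{H}^\mathrm{r}_{0,\delta\sigma}$), and then account for the spatial rescaling term by term.

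The key steps, in order, are the following. First, for the spatial factor I use that for any fixed-time function $\Vert\Delta u(\cdot,\mathrm{t})\Vert^2_{\mathrm{L}^2(\Omega)}=\delta^3\Vert\widehat{\Delta u}(\cdot,\tau)\Vert^2_{\mathrm{L}^2(\mathrm{B})}$, the second identity in (\ref{l12}), coming from $\mathrm{d}\mathrm{x}=\delta^3\,\mathrm{d}\xi$. Second, for the time derivatives I invoke (\ref{ss2}), which gives $(\partial_\mathrm{t}^\mathrm{k}\Delta u)^\wedge=\delta^{-\mathrm{k}}\partial_\tau^\mathrm{k}\widehat{\Delta u}$, so that $\Vert\partial_\mathrm{t}^\mathrm{k}\Delta u(\cdot,\mathrm{t})\Vert^2_{\mathrm{L}^2(\Omega)}=\delta^3\delta^{-2\mathrm{k}}\Vert\partial_\tau^\mathrm{k}\widehat{\Delta u}(\cdot,\tau)\Vert^2_{\mathrm{L}^2(\mathrm{B})}$. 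Third, I rewrite the weight factor $\mathrm{T}^{2\mathrm{k}}=\delta^{2\mathrm{k}}\mathrm{T}_\delta^{2\mathrm{k}}$, which cancels the $\delta^{-2\mathrm{k}}$ produced by the time derivatives. Collecting the remaining powers of $\delta$ — namely $\delta^3$ from the spatial measure and one further $\delta$ from $\mathrm{d}\mathrm{t}=\delta\,\mathrm{d}\tau$ — yields, in close parallel to the display in the proof of \Cref{ll9},
\begin{align}\nonumber
\Vert\Delta u\Vert^2_{\mathrm{H}^\mathrm{r}_{0,\sigma}(0,\mathrm{T};\mathrm{L}^2(\Omega))}
=\delta^4\int_0^{\mathrm{T}_\delta}\mathrm{e}^{-2\delta\sigma\tau}\sum_{\mathrm{k}=0}^\mathrm{r}\mathrm{T}_\delta^{2\mathrm{k}}\Big\Vert\frac{\partial^\mathrm{k}\widehat{\Delta u}(\cdot,\tau)}{\partial\tau^\mathrm{k}}\Big\Vert^2_{\mathrm{L}^2(\mathrm{B})}\,\mathrm{d}\tau
=\delta^4\Vert\widehat{\Delta u}\Vert^2_{\mathrm{H}^\mathrm{r}_{0,\delta\sigma}(0,\mathrm{T}_\delta;\mathrm{L}^2(\mathrm{B}))}.
\end{align}

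The final and decisive step is to pass from $\widehat{\Delta u}$ to $\Delta\hat{u}$ using (\ref{l10}), i.e. $\Vert\widehat{\Delta u}\Vert_{\mathrm{L}^2(\mathrm{B})}=\delta^{-2}\Vert\Delta\hat u\Vert_{\mathrm{L}^2(\mathrm{B})}$, a relation that holds at every fixed $\tau$ and therefore factors through the weighted norm as $\Vert\widehat{\Delta u}\Vert^2_{\mathrm{H}^\mathrm{r}_{0,\delta\sigma}}=\delta^{-4}\Vert\Delta\hat u\Vert^2_{\mathrm{H}^\mathrm{r}_{0,\delta\sigma}}$. Substituting this into the display above makes the overall factor $\delta^4\cdot\delta^{-4}=1$, which gives the claimed exact equality (\ref{l13}); I would then take square roots. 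I do not expect a genuine obstacle here: the argument is routine once (\ref{ss2}), (\ref{l12}) and (\ref{l10}) are in hand. The only point requiring care is the bookkeeping of the $\delta$-powers, and in particular recognising that \Cref{ll10} produces equality with no residual power of $\delta$ (in contrast to the factor $\delta^2$ in \Cref{ll9} and $\delta$ in (\ref{l9})) precisely because the Laplacian contributes $\delta^{-4}$ through (\ref{l10}) rather than the $\delta^{-2}$ contributed by the gradient through (\ref{l8}).
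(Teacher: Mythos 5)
Your proposal is correct and follows essentially the same route as the paper's own proof: the substitution $\mathrm{t}=\delta\tau$, the use of (\ref{ss2}) and the second identity of (\ref{l12}) to produce the factor $\delta^{3}\delta^{-2\mathrm{k}}$, the cancellation against $\mathrm{T}^{2\mathrm{k}}=\delta^{2\mathrm{k}}\mathrm{T}_\delta^{2\mathrm{k}}$, and finally (\ref{l10}) to convert $\widehat{\Delta u}$ into $\Delta\hat{u}$ with the exact cancellation $\delta^{4}\cdot\delta^{-4}=1$. If anything, your version is slightly cleaner, since you keep equalities throughout where the paper writes intermediate $\le$ signs despite the claimed conclusion being an equality.
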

\begin{proof} Using (\ref{ss2}), (\ref{l12}) and (\ref{l10}), we verify that 
\begin{align}
    \Vert\Delta u \Vert_{\mathrm{H}_{0,\sigma}^\mathrm{r}\big(0,\mathrm{T};\mathrm{L}^2(\Omega)\big)}^2 &= \nonumber \int_{0}^\mathrm{T}\mathrm{e}^{-2\sigma \mathrm{t}}\sum_{\mathrm{k}=0}^\mathrm{r} \mathrm{T}^{2\mathrm{k}}\Big\Vert \frac{\partial^\mathrm{k}\Delta u(\cdot,\mathrm{t})}{\partial\mathrm{t}^\mathrm{k}}\Big\Vert^2_{\mathrm{L}^2(\Omega)}d\mathrm{t}
    \\ \nonumber &\le \int_{0}^\mathrm{T_\delta}\mathrm{e}^{-2\sigma \delta\mathrm{\tau}}\sum_{\mathrm{k}=0}^\mathrm{r} \cancel{\delta^{2\mathrm{k}}}\mathrm{T}_\delta^{2\mathrm{k}}\underbrace{\cancel{\delta^{-2\mathrm{k}}} \delta^3\Big\Vert \frac{\partial^\mathrm{k}\widehat{\Delta u}(\cdot,\mathrm{\tau})}{\partial\mathrm{\tau}^\mathrm{k}}\Big\Vert^2_{\mathrm{L}^2(\mathrm{B})}}_{\text{Due to}\ (\ref{ss2})\ \text{and} \ (\ref{l12})}\delta d\mathrm{\tau}
    \\ \nonumber &\le \delta^4\int_{0}^\mathrm{T_\delta}\mathrm{e}^{-2\sigma \delta\mathrm{\tau}}\sum_{\mathrm{k}=0}^\mathrm{r}\mathrm{T}_\delta^{2\mathrm{k}}\underbrace{\delta^{-4}\Big\Vert \frac{\partial^\mathrm{k}\Delta \widehat{u}(\cdot,\mathrm{\tau})}{\partial\mathrm{\tau}^\mathrm{k}}\Big\Vert^2_{\mathrm{L}^2(\mathrm{B})}}_{(\ref{l10})}d\mathrm{\tau}
    \\ &= \nonumber\Vert \Delta\hat{u} \Vert_{\mathrm{H}_{0,\delta\sigma}^\mathrm{r}(0,\mathrm{T}_\delta;\mathrm{L}^2(\mathrm{B}))}^2.
\end{align}
Hence, this completes the proof.
\end{proof}
\noindent
Now, using Proposition \ref{p1}, Proposition \ref{p3}, Lemma \ref{ll9}, and Lemma \ref{ll10}, we derive the following estimate 
\begin{align}
 \Vert \partial_\nu u(\cdot,\mathrm{t})\Vert_{\mathrm{H}^{-\frac{1}{2}}(\partial\Omega)}
 &\lesssim \nonumber   \Vert\partial_\nu \hat{u} \Vert_{\mathrm{H}_{0,\delta\sigma}^\mathrm{r}(0,\mathrm{T}_\delta;\mathrm{H}^{-\frac{1}{2}}(\partial\mathrm{B}))}
 \\ &\lesssim \nonumber \Vert \nabla \widehat{u} \Vert_{\mathrm{H}_{0,\delta\sigma}^\mathrm{r}\big(0,\mathrm{T}_\delta;\mathbb{H}{(\text{div},\mathrm{B})}\big)}
 \\ &\lesssim \nonumber  \Big( \Vert \nabla \widehat{u} \Vert_{\mathrm{H}_{0,\delta\sigma}^\mathrm{r}(0,\mathrm{T}_\delta;\mathbb{L}^2(\mathrm{B}))}^2 + \Vert \Delta \widehat{u} \Vert_{\mathrm{H}_{0,\delta\sigma}^\mathrm{r}(0,\mathrm{T}_\delta;\mathrm{L}^2(\mathrm{B}))}^2\Big)^\frac{1}{2}
  \\ &\lesssim \nonumber  \Big(\underbrace{\delta^{-2} \Vert \nabla u \Vert_{\mathrm{H}_{0,\sigma}^\mathrm{r}(0,\mathrm{T};\mathbb{L}^2(\Omega))}^2}_{(\ref{l9})} +  \underbrace{\Vert \Delta u \Vert_{\mathrm{H}_{0,\sigma}^\mathrm{r}(0,\mathrm{T};\mathrm{L}^2(\Omega))}^2}_{(\ref{l13})}\Big)^\frac{1}{2}
  \\ &=  \Big( \delta^{-2} \cdot \delta^{5} +  \delta^3\Big)^\frac{1}{2} \sim \delta^{\textcolor{black}{\frac{3}{2}}}.
\end{align}
Therefore, we deduce that
\begin{align}\label{p33}
  \Vert \partial_\nu u(\cdot,\mathrm{t})\Vert_{\mathrm{H}^{-\frac{1}{2}}(\partial\Omega)} \sim \delta^{\textcolor{black}{\frac{3}{2}}}.
\end{align}
In order to prove Proposition \ref{p3}, we need to improve the above estimate. For this, let us go back to the original boundary integral equation
\begin{align}
\nonumber
(1+\frac{\alpha\rho_\mathrm{m}}{2})\partial_\nu u +\gamma \partial_\nu\bm{\mathrm{V}}_\Omega\Big[u_{\mathrm{t}\mathrm{t}}\Big] + \alpha  \bm{\mathcal{K}}^\mathrm{t}\Big[\partial_\nu u\Big] = \partial_\nu u^\mathrm{i},
\end{align}
that we rewrite as
\begin{align}\label{apr1}
(\frac{1}{\alpha}+\frac{\rho_\mathrm{m}}{2})\partial_\nu u + \frac{\gamma}{\alpha} \partial_\nu\bm{\mathrm{V}}_\Omega\Big[u_{\mathrm{t}\mathrm{t}}\Big] + \bm{\mathcal{K}}^\mathrm{t}\Big[\partial_\nu u\Big] = \frac{1}{\alpha}\partial_\nu u^\mathrm{i}.
\end{align}
Using Taylor's series expansion we get
\begin{align}\label{ap1}
  \bm{\mathcal{K}}^\mathrm{t}\Big[\partial_\nu u\Big] 
    &=\nonumber\rho_\mathrm{m}\Bigg[ -\cancel{\mathrm{c}^{-1}_0\int_{\partial\Omega} \partial_\mathrm{t}\partial_\nu u(\mathrm{y}, \mathrm{t}) \frac{(\mathrm{x}-\mathrm{y}).\nu_\mathrm{x}}{|\mathrm{x}-\mathrm{y}|^2}d\sigma_\mathrm{x}d\sigma_\mathrm{y}}
    + \mathrm{c}^{-2}_0\int_{\partial\Omega}\partial_\mathrm{t}^2\partial_\nu u(\mathrm{y}, \mathrm{t}) \frac{(\mathrm{x}-\mathrm{y}).\nu_\mathrm{x}}{|\mathrm{x}-\mathrm{y}|}d\sigma_\mathrm{y} 
    \\ \nonumber &- \mathrm{c}^{-3}_0\int_{\partial\Omega}\partial_\mathrm{t}^3\partial_\nu u(\mathrm{y}, \mathrm{t}_1) (\mathrm{x}-\mathrm{y})\cdot\nu_\mathrm{x} d\sigma_\mathrm{x} 
    - \int_{\partial\Omega}\partial_\nu u(\mathrm{y}, \mathrm{t}) \frac{(\mathrm{x}-\mathrm{y})\cdot\nu_\mathrm{x}}{|\mathrm{x}-\mathrm{y}|^3}d\sigma_\mathrm{y}
    \\ \nonumber &+ \cancel{\mathrm{c}^{-1}_0\int_{\partial\Omega} \partial_\mathrm{t}\partial_\nu u(\mathrm{y}, \mathrm{t}) \frac{(\mathrm{x}-\mathrm{y}).\nu_\mathrm{x}}{|\mathrm{x}-\mathrm{y}|^2}d\sigma_\mathrm{x}d\sigma_\mathrm{y}} 
    - \frac{1}{2} \mathrm{c}^{-2}_0\int_{\partial\Omega}\partial_\mathrm{t}^2\partial_\nu u(\mathrm{y}, \mathrm{t}) \frac{(\mathrm{x}-\mathrm{y})\cdot\nu_\mathrm{x}}{|\mathrm{x}-\mathrm{y}|}d\sigma_\mathrm{y}  
    \\ &+ \frac{1}{3!}\mathrm{c}^{-3}_0\int_{\partial\Omega}\partial_\mathrm{t}^3\partial_\nu u(\mathrm{y}, \mathrm{t}_2) (\mathrm{x}-\mathrm{y})\cdot\nu_\mathrm{x} d\sigma_\mathrm{y}\Bigg],
\end{align}
and
\begin{align}\label{ap2}
\partial_\nu\bm{\mathrm{V}}_\Omega\Big[u_{\mathrm{t}\mathrm{t}}\Big] &:= \nonumber \partial_\nu\int_\Omega\frac{\rho_\mathrm{m}}{4\pi|\mathrm{x}-\mathrm{y}|}\partial_\mathrm{t}^2u(\mathrm{y},\mathrm{t}-\mathrm{c}^{-1}_0|\mathrm{x}-\mathrm{y}|)d\mathrm{y}
\\ &= \rho_\mathrm{m}\Bigg[\partial_\nu\int_\Omega\frac{\partial_\mathrm{t}^2u(\mathrm{y},\mathrm{t})}{4\pi|\mathrm{x}-\mathrm{y}|}d\mathrm{y} - \underbrace{\partial_\nu\int_\Omega\frac{\partial_\mathrm{t}^{\textcolor{black}{3}}u(\mathrm{y},\mathrm{t})}{4\pi|\mathrm{x}-\mathrm{y}|}\mathrm{c}^{-1}_0|\mathrm{x}-\mathrm{y}|d\mathrm{y}}_{=0} + \partial_\nu\int_\Omega\frac{\partial_\mathrm{t}^{\textcolor{black}{4}}u(\mathrm{y},\mathrm{t}_3)}{4\pi|\mathrm{x}-\mathrm{y}|}\mathrm{c}^{-2}_0|\mathrm{x}-\mathrm{y}|^2d\mathrm{y}\Bigg],
\end{align}
where $\mathrm{t}_1, \mathrm{t}_2, \mathrm{t}_3 \in \big(\mathrm{t}-\mathrm{c}^{-1}_0|\mathrm{x}-\mathrm{y}|,\mathrm{t}\big).$ 
\newline

\noindent
Now, let us denote $\mathrm{t}_\mathrm{m} := \max(\mathrm{t}_1, \mathrm{t}_2)$. Consequently, we approximate the stated equation (\ref{apr1}) using (\ref{ap1}) and (\ref{ap2}) as follows:
\begin{align}\label{important}
   \nonumber(\frac{1}{\alpha}+\frac{\rho_\mathrm{m}}{2})\partial_\nu u - \rho_\mathrm{m} \int_{\partial\Omega}\partial_\nu u(\mathrm{y}, \mathrm{t}) \frac{(\mathrm{x}-\mathrm{y})\cdot\nu_\mathrm{x}}{|\mathrm{x}-\mathrm{y}|^3}d\sigma_\mathrm{y} 
    &= \frac{1}{\alpha}\partial_\nu u^\mathrm{i} 
    -\rho_\mathrm{m}\Bigg[ \frac{1}{2} \mathrm{c}^{-2}_0\int_{\partial\Omega}\partial_\mathrm{t}^2\partial_\nu u(\mathrm{y}, \mathrm{t}) \frac{(\mathrm{x}-\mathrm{y})\cdot\nu_\mathrm{x}}{|\mathrm{x}-\mathrm{y}|}d\sigma_\mathrm{y}
    \\ \nonumber&+ \frac{5}{6}\mathrm{c}^{-3}_0\int_{\partial\Omega}\partial_\mathrm{t}^3\partial_\nu u(\mathrm{y}, \mathrm{t}_\mathrm{m} ) (\mathrm{x}-\mathrm{y})\cdot\nu_\mathrm{x} d\sigma_\mathrm{y}
    \\ &+ \frac{\gamma}{\alpha}\partial_\nu\int_\Omega\frac{\partial_\mathrm{t}^2u(\mathrm{y},\mathrm{t})}{4\pi|\mathrm{x}-\mathrm{y}|}d\mathrm{y} + \mathrm{c}^{-2}_0\frac{\gamma}{\alpha} \partial_\nu\int_\Omega|\mathrm{x}-\mathrm{y}|\partial_\mathrm{t}^4u(\mathrm{y},\mathrm{t}_3)d\mathrm{y}\Bigg].
\end{align}
Before proceeding to the next step, we introduce the notations
\begin{align}
    \partial_\nu\bm{\mathcal{N}}_{\textbf{Lap},\Omega}\Big[\partial_\mathrm{t}^2u\Big] :=  \partial_\nu\int_\Omega\frac{\partial_\mathrm{t}^2u(\mathrm{y},\mathrm{t})}{4\pi|\mathrm{x}-\mathrm{y}|}d\mathrm{y},\quad \displaystyle \mathrm{A}(\mathrm{y})= \int_{\partial\Omega}\frac{(\mathrm{x}-\mathrm{y})\cdot\nu}{|\mathrm{x}-\mathrm{y}|}d\sigma_\mathrm{x}, \quad     \mathrm{A}_{\partial\Omega} := \frac{1}{|\partial \Omega|}\int_{\partial\Omega}\int_{\partial\Omega}\frac{(\mathrm{x}-\mathrm{y}).\nu}{|\mathrm{x}-\mathrm{y}|}d\sigma_\mathrm{x}d\sigma_\mathrm{y}.
\end{align}
We also recall $\displaystyle\bm{\mathcal{K}}^*_\textbf{Lap}[\partial_\nu u]:= -\rho_\mathrm{m} \int_{\partial\Omega} \frac{(\mathrm{x}-\mathrm{y})\cdot\nu_\mathrm{x}}{|\mathrm{x}-\mathrm{y}|^3}\partial_\nu u(\mathrm{y}, \mathrm{t})d\sigma_\mathrm{y} \quad \text{and}\quad \bm{\mathrm{T}}^* := \Big[(\frac{1}{\alpha}+\frac{\rho_\mathrm{m}}{2}) +\bm{\mathcal{K}}^*_\textbf{Lap} \Big]^{-1} .$
Then we obtain the following equation
\begin{align}
    &\nonumber\Big((\frac{1}{\alpha}+\frac{\rho_\mathrm{m}}{2}) + \bm{\mathcal{K}}^*_\textbf{Lap}\Big)\Big[\partial_\nu u \Big]
   =\nonumber \frac{1}{\alpha}\Big[\partial_\nu u^\mathrm{i} - \frac{1}{|\partial\Omega|}\int_{\partial\Omega}\partial_\nu u^\mathrm{i}\Big] + \frac{1}{|\partial\Omega|}\frac{1}{\alpha}\int_{\partial\Omega}\partial_\nu u^\mathrm{i}
     - \frac{\rho_\mathrm{m}}{2} \mathrm{c}^{-2}_0\Big[\int_{\partial\Omega}\partial_\mathrm{t}^2\partial_\nu u \frac{(\mathrm{x}-\mathrm{y})\cdot\nu_\mathrm{x}}{|\mathrm{x}-\mathrm{y}|}d\sigma_\mathrm{y} 
    \\ \nonumber &- \frac{1}{|\partial\Omega|}\int_{\partial\Omega}\int_{\partial\Omega}\partial_\mathrm{t}^2\partial_\nu u \frac{(\mathrm{x}-\mathrm{y})\cdot\nu_\mathrm{x}}{|\mathrm{x}-\mathrm{y}|}d\sigma_\mathrm{y}d\sigma_\mathrm{x}\Big] - \frac{\rho_\mathrm{m}}{|\partial\Omega|}\int_{\partial\Omega}\Big(\mathrm{A}(\mathrm{y})-\mathrm{A}_{\partial\Omega}\Big)\partial_\mathrm{t}^2\partial_\nu u(\mathrm{y},\mathrm{t})d\sigma_\mathrm{y}
    \\&- \nonumber \frac{\rho_\mathrm{m}}{|\partial\Omega|}\int_{\partial\Omega}\mathrm{A}_{\partial\Omega}\partial_\mathrm{t}^2\partial_\nu u(\mathrm{y},\mathrm{t})d\sigma_\mathrm{y}
   - \frac{5\rho_\mathrm{m}}{6}\mathrm{c}^{-3}_0\Big[\int_{\partial\Omega}\partial_\mathrm{t}^3\partial_\nu u(\mathrm{y}, \mathrm{t}_\mathrm{m} ) (\mathrm{x}-\mathrm{y})\cdot\nu_\mathrm{x} d\sigma_\mathrm{y} 
     \\ &- \nonumber\frac{1}{|\partial\Omega|}\int_{\partial\Omega} \int_{\partial\Omega}\partial_\mathrm{t}^3\partial_\nu u(\mathrm{y}, \mathrm{t}_\mathrm{m} ) (\mathrm{x}-\mathrm{y})\cdot\nu_\mathrm{x} d\sigma_\mathrm{y}d\sigma_\mathrm{x}\Big]
    -\frac{5\rho_\mathrm{m}}{6}\mathrm{c}^{-3}_0\frac{1}{|\partial\Omega|}\int_{\partial\Omega} \int_{\partial\Omega}\partial_\mathrm{t}^3\partial_\nu u(\mathrm{y}, \mathrm{t}_\mathrm{m} ) (\mathrm{x}-\mathrm{y})\cdot\nu_\mathrm{x} d\sigma_\mathrm{y}d\sigma_\mathrm{x}
    \\ \nonumber &- \frac{\gamma}{\alpha}\Big[\partial_\nu\bm{\mathcal{N}}_{\textbf{Lap},\Omega}[\partial_\mathrm{t}^2u]- \frac{1}{|\partial\Omega|}\int_{\partial\Omega}\partial_\nu\bm{\mathcal{N}}_{\textbf{Lap},\Omega}[\partial_\mathrm{t}^2u]\Big] -\nonumber \frac{\gamma}{\alpha}\frac{1}{|\partial\Omega|}\int_{\partial\Omega}\partial_\nu\bm{\mathcal{N}}_{\textbf{Lap},\Omega}[\partial_\mathrm{t}^2u] - \mathrm{c}^{-2}_0\frac{\gamma}{\alpha} \Big[\partial_\nu\int_\Omega|\mathrm{x}-\mathrm{y}|\partial_\mathrm{t}^4u(\mathrm{y},\mathrm{t}_3)d\mathrm{y}
    \\ &- \frac{1}{|\partial\Omega|} \int_{\partial\Omega}\partial_\nu\int_\Omega|\mathrm{x}-\mathrm{y}|\partial_\mathrm{t}^4u(\mathrm{y},\mathrm{t}_3)d\mathrm{y}\Big]- \mathrm{c}^{-2}_0\frac{\gamma\rho_\mathrm{m}}{\alpha}\frac{1}{|\partial\Omega|} \int_{\partial\Omega}\partial_\nu\int_\Omega|\mathrm{x}-\mathrm{y}|\partial_\mathrm{t}^4u(\mathrm{y},\mathrm{t}_3)d\mathrm{y}.
\end{align}
Now, considering $\rho_\mathrm{m}\sim 1$, we take the $\mathrm{H}^{-\frac{1}{2}}(\partial\Omega)$-norm on the both hand sides of the above equation to obtain
\begin{align}\label{nu}
    & \nonumber\Vert \partial_\nu u \Vert_{\mathrm{H}^{-\frac{1}{2}}(\partial\Omega)} 
    \\ &\nonumber\lesssim \frac{1}{\alpha}\Vert \bm{\mathrm{T}}^*\Vert_{\mathcal{L}(\mathrm{H}^{-\frac{1}{2}}_0(\partial\Omega))}\Vert \partial_\nu u^\mathrm{i} - \frac{1}{|\partial\Omega|}\int_{\partial\Omega}\partial_\nu u^\mathrm{i} \Vert_{\mathrm{H}^{-\frac{1}{2}}_0(\partial\Omega)} + \frac{1}{\alpha}\frac{1}{|\partial\Omega|}\Vert\int_{\partial\Omega}\partial_\nu u^\mathrm{i}\Vert_{\mathrm{H}^{-\frac{1}{2}}(\partial\Omega)} \Vert\bm{\mathrm{T}}^*\Vert_{\mathcal{L}(\mathrm{H}^{-\frac{1}{2}}(\partial\Omega))}
    \\ \nonumber &+ \Vert \bm{\mathrm{T}}^*\Vert_{\mathcal{L}(\mathrm{H}^{-\frac{1}{2}}_0(\partial\Omega))}\Vert \int_{\partial\Omega}(\partial_\nu u)_{\mathrm{t}\mathrm{t}} \frac{(\mathrm{x}-\mathrm{y})\cdot\nu_\mathrm{x}}{|\mathrm{x}-\mathrm{y}|}d\sigma_\mathrm{y} 
    -\frac{1}{|\partial\Omega|}\int_{\partial\Omega}\int_{\partial\Omega}(\partial_\nu u)_{\mathrm{t}\mathrm{t}} \frac{(\mathrm{x}-\mathrm{y})\cdot\nu_\mathrm{x}}{|\mathrm{x}-\mathrm{y}|}d\sigma_\mathrm{y}d\sigma_\mathrm{x}\Vert_{\mathrm{H}^{-\frac{1}{2}}_0(\partial\Omega)}
    \\ \nonumber &+  \frac{1}{|\partial\Omega|}\Vert \int_{\partial\Omega}(\mathrm{A}(\cdot)-\mathrm{A}_{\partial\Omega})\partial_\mathrm{t}^2\partial_\nu u \Vert_{\mathrm{H}_0^{-\frac{1}{2}}(\partial\Omega)}\Vert\bm{\mathrm{T}}^*\Vert_{\mathcal{L}(\mathrm{H}_0^{-\frac{1}{2}}(\partial\Omega))}
    + \frac{1}{|\partial\Omega|}\Vert\int_{\partial\Omega}\partial_\mathrm{t}^2\partial_\nu u(\cdot,\mathrm{t}) \mathrm{A}_{\partial\Omega}\Vert_{\mathrm{H}^{-\frac{1}{2}}(\partial\Omega)}\Vert\bm{\mathrm{T}}^*\Vert_{\mathcal{L}(\mathrm{H}^{-\frac{1}{2}}(\partial\Omega))}
    \\ \nonumber &+ \Vert \bm{\mathrm{T}}^*\Vert_{\mathcal{L}(\mathrm{H}^{-\frac{1}{2}}_0(\partial\Omega))} \Vert \int_{\partial\Omega}(\partial_\nu u)_{\mathrm{t}\mathrm{t}\mathrm{t}}(\mathrm{x}-\mathrm{y})\cdot\nu_\mathrm{x} d\sigma_\mathrm{y}
    -\frac{1}{|\partial\Omega|}\int_{\partial\Omega} \int_{\partial\Omega}(\partial_\nu u)_{\mathrm{t}\mathrm{t}\mathrm{t}} (\mathrm{x}-\mathrm{y})\cdot\nu_\mathrm{x} d\sigma_\mathrm{y}d\sigma_\mathrm{x}\Vert_{\mathrm{H}^{-\frac{1}{2}}_0(\partial\Omega)}
    \\\nonumber &+ \frac{1}{|\partial\Omega|}\Vert \int_{\partial\Omega} \int_{\partial\Omega}\partial_\mathrm{t}^3\partial_\nu u (\mathrm{x}-\mathrm{y})\cdot\nu_\mathrm{x} d\sigma_\mathrm{y}d\sigma_\mathrm{x}\Vert_{\mathrm{H}^{-\frac{1}{2}}(\partial\Omega)}\Vert\bm{\mathrm{T}}^*\Vert_{\mathcal{L}(\mathrm{H}^{-\frac{1}{2}}(\partial\Omega))} \\ \nonumber&+ \frac{\gamma}{\alpha} \frac{1}{|\partial\Omega|}\Vert \int_{\partial\Omega}\partial_\nu\bm{\mathcal{N}}_{\textbf{Lap},\Omega}[\partial_\mathrm{t}^2u]\Vert_{\mathrm{H}^{-\frac{1}{2}}(\partial\Omega)}\Vert\bm{\mathrm{T}}^*\Vert_{\mathcal{L}(\mathrm{H}^{-\frac{1}{2}}(\partial\Omega))}
    \\ \nonumber &+ \frac{\gamma}{\alpha} \Vert \partial_\nu\bm{\mathcal{N}}_{\textbf{Lap},\Omega}[\partial_\mathrm{t}^2u]- \frac{1}{|\partial\Omega|}\int_{\partial\Omega}\partial_\nu\bm{\mathcal{N}}_{\textbf{Lap},\Omega}[\partial_\mathrm{t}^2u]\Vert_{\mathrm{H}_0^{-\frac{1}{2}}(\partial\Omega)}\Vert\bm{\mathrm{T}}^*\Vert_{\mathcal{L}(\mathrm{H}_0^{-\frac{1}{2}}(\partial\Omega))} 
    \\ \nonumber&+ \mathrm{c}^{-2}_0\frac{\gamma}{\alpha} \frac{1}{|\partial\Omega|}\Vert \int_{\partial\Omega}\partial_\nu\int_\Omega|\mathrm{x}-\mathrm{y}|\partial_\mathrm{t}^4u(\mathrm{y},\mathrm{t}_3)d\mathrm{y}\Vert_{\mathrm{H}^{-\frac{1}{2}}(\partial\Omega)}\Vert\bm{\mathrm{T}}^*\Vert_{\mathcal{L}(\mathrm{H}^{-\frac{1}{2}}(\partial\Omega))}
    \\ &+ \mathrm{c}^{-2}_0\frac{\gamma}{\alpha}\Vert \partial_\nu\int_\Omega|\mathrm{x}-\mathrm{y}|\partial_\mathrm{t}^4u(\mathrm{y},\mathrm{t}_3)d\mathrm{y} - \frac{1}{|\partial\Omega|} \int_{\partial\Omega}\partial_\nu\int_\Omega|\mathrm{x}-\mathrm{y}|\partial_\mathrm{t}^4u(\mathrm{y},\mathrm{t}_3)d\mathrm{y}\Vert_{\mathrm{H}_0^{-\frac{1}{2}}(\partial\Omega)}\Vert\bm{\mathrm{T}}^*\Vert_{\mathcal{L}(\mathrm{H}_0^{-\frac{1}{2}}(\partial\Omega))}.
\end{align}
We have the following bounds: (for the first two we refer to \cite{ammari, ammari1}) and the third is from (\ref{p33})
\begin{align}\label{bounds}
    \Vert \bm{\mathrm{T}}^*\Vert_{\mathcal{L}(\mathrm{H}^{-\frac{1}{2}}_0(\partial\Omega))} \sim 1\quad, \quad
    \Vert \bm{\mathrm{T}}^*\Vert_{\mathcal{L}(\mathrm{H}^{-\frac{1}{2}}(\partial\Omega))} \sim \alpha, \quad \text{and}\ \Vert \partial_\mathrm{t}^\mathrm{k}\partial_\nu u(\cdot, \mathrm{t})\Vert_{\mathrm{H}^{-\frac{1}{2}}(\partial\Omega)} \sim \delta \quad \text{for}\quad \mathrm{k} =0,1,....
\end{align}
Next, we define and estimate each term of the expression (\ref{nu}). In particular, we want to emphasize that we will estimate each term using (\ref{bounds}) and $\alpha \sim \delta^{-2}$. 
\bigbreak
\noindent
We then use the triangle inequality, smoothness of $\partial_\nu u^\mathrm{i}$ to estimate $\mathrm{S}_1$. 
\begin{align}\label{s1}
\bullet\ \text{Estimation of}\ \mathrm{S}_1 &:=\nonumber \frac{1}{\alpha}\Vert \bm{\mathrm{T}}^*\Vert_{\mathcal{L}(\mathrm{H}^{-\frac{1}{2}}_0(\partial\Omega))}\Big\Vert \partial_\nu u^\mathrm{i} - \frac{1}{|\partial\Omega|}\int_{\partial\Omega}\partial_\nu u^\mathrm{i}\Big\Vert_{\mathrm{H}^{-\frac{1}{2}}_0(\partial\Omega)} 
\\ &\lesssim \frac{1}{\alpha} \Vert \partial_\nu u^\mathrm{i}\Vert_{\mathrm{H}^{-\frac{1}{2}}_0(\partial\Omega)} \lesssim \frac{1}{\alpha}\Vert 1 \Vert_{\mathrm{L}^2(\partial\Omega)} \sim \delta^3.
\end{align}
Next, as $\displaystyle\Big|\int_{\partial\Omega}\partial_\nu u^\mathrm{i} \Big| = \Big|\int_{\partial\Omega}\Delta u^\mathrm{i} \Big| = \frac{\rho_\mathrm{m}}{\mathrm{k}_\mathrm{m}} \Big|\int_{\Omega}\partial_\mathrm{t}^2 u^\mathrm{i} \Big| = \mathcal{O}(\delta^3),$ we have
\begin{align}\label{s2}
\bullet\ \text{Estimation of}\ \mathrm{S}_2 &:=\nonumber \frac{1}{\alpha}\frac{1}{|\partial\Omega|}\Vert \bm{\mathrm{T}}^*\Vert_{\mathcal{L}(\mathrm{H}^{-\frac{1}{2}}(\partial\Omega))}\Big\Vert\int_{\partial\Omega}\partial_\nu u^\mathrm{i}\Big\Vert_{\mathrm{H}^{-\frac{1}{2}}(\partial\Omega)}
\\ &\lesssim \frac{1}{\alpha}\frac{1}{|\partial\Omega|}\Vert \bm{\mathrm{T}}^*\Vert_{\mathcal{L}(\mathrm{H}^{-\frac{1}{2}}(\partial\Omega))} \Vert 1 \Vert_{\mathrm{L}^2(\partial\Omega)} \Big|\int_{\partial\Omega}\partial_\nu u^\mathrm{i} \Big| \sim \delta^2.
\end{align}
Now, let us do the following calculation using norm-definition (\ref{normd1})
\begin{align}
    \Big\Vert \frac{(\mathrm{x}-\cdot)\cdot \nu_\mathrm{x}}{|\mathrm{x}-\cdot|} \Big\Vert_{\mathrm{H}^{\frac{1}{2}}_0(\partial\Omega)}^2 &=\nonumber \Big\Vert \frac{(\mathrm{x}-\cdot)\cdot \nu_\mathrm{x}}{|\mathrm{x}-\cdot|} \Big\Vert_{\mathrm{L}^2_0(\partial\Omega)}^2 + \int_{\partial\Omega}\int_{\partial\Omega}\dfrac{\Big(\frac{(\mathrm{x}-\mathrm{y})\cdot \nu_\mathrm{x}}{|\mathrm{x}-\mathrm{y}|}- \frac{(\mathrm{x}-\mathrm{z})\cdot \nu_\mathrm{x}}{|\mathrm{x}-\mathrm{z}|}\Big)^2}{|\mathrm{y}-\mathrm{z}|^3}d\sigma_\mathrm{y}d\sigma_\mathrm{z}
    \\ &= \nonumber \delta^2 \Big\Vert \frac{(\mathrm{\xi}-\cdot)\cdot \nu_\mathrm{\xi}}{|\mathrm{\xi}-\cdot|} \Big\Vert_{\mathrm{L}^2_0(\partial\mathrm{B})}^2 + \delta \int_{\partial\Omega}\int_{\partial\Omega}\dfrac{\Big(\frac{(\mathrm{\xi}-\mathrm{\eta})\cdot \nu_\mathrm{\xi}}{|\mathrm{\xi}-\mathrm{\eta}|}- \frac{(\mathrm{\xi}-\mathrm{\lambda})\cdot \nu_\mathrm{\xi}}{|\mathrm{\xi}-\mathrm{\lambda}|}\Big)^2}{|\mathrm{\eta}-\mathrm{\lambda}|^3}d\sigma_\mathrm{\eta}d\sigma_\mathrm{\lambda}
    \\ &= \nonumber\delta \Big\Vert \frac{(\mathrm{\xi}-\cdot)\cdot \nu_\mathrm{\xi}}{|\mathrm{\xi}-\cdot|} \Big\Vert_{\mathrm{H}^{\frac{1}{2}}_0(\partial\mathrm{B})}^2.
\end{align}
Moreover, as $\frac{(\mathrm{\xi}-\cdot)\cdot \nu_\mathrm{\xi}}{|\mathrm{\xi}-\cdot|} \sim 1$, we obtain
\begin{align}\label{es6}
    \Big\Vert \frac{(\mathrm{x}-\cdot)\cdot \nu_\mathrm{x}}{|\mathrm{x}-\cdot|} \Big\Vert_{\mathrm{H}^{\frac{1}{2}}_0(\partial\Omega)} \sim \delta^\frac{1}{2}.
\end{align}
Then using the estimate (\ref{es6}) we obtain
\begin{align}\label{s3}
\bullet\ \nonumber&\text{Estimation of}\ \mathrm{S}_3 \\ &:=\nonumber
\Vert \bm{\mathrm{T}}^*\Vert_{\mathcal{L}(\mathrm{H}^{-\frac{1}{2}}_0(\partial\Omega))} \Big\Vert \int_{\partial\Omega}\partial_\mathrm{t}^2\partial_\nu u \frac{(\mathrm{x}-\mathrm{y})\cdot\nu_\mathrm{x}}{|\mathrm{x}-\mathrm{y}|}d\sigma_\mathrm{y} 
-\frac{1}{|\partial\Omega|}\int_{\partial\Omega}\int_{\partial\Omega}\partial_\mathrm{t}^2\partial_\nu u \frac{(\mathrm{x}-\mathrm{y})\cdot\nu_\mathrm{x}}{|\mathrm{x}-\mathrm{y}|}d\sigma_\mathrm{y}d\sigma_\mathrm{x}\Big\Vert_{\mathrm{H}^{-\frac{1}{2}}_0(\partial\Omega)}
\\ \nonumber &\le \Big\Vert \int_{\partial\Omega}\partial_\mathrm{t}^2\partial_\nu u \frac{(\mathrm{x}-\mathrm{y})\cdot\nu_\mathrm{x}}{|\mathrm{x}-\mathrm{y}|}d\sigma_\mathrm{y}\Big\Vert_{\mathrm{H}^{-\frac{1}{2}}_0(\partial\Omega)} + \frac{1}{|\partial\Omega|} \Big\Vert\int_{\partial\Omega}\int_{\partial\Omega}\partial_\mathrm{t}^2\partial_\nu u \frac{(\mathrm{x}-\mathrm{y})\cdot\nu_\mathrm{x}}{|\mathrm{x}-\mathrm{y}|}d\sigma_\mathrm{y}d\sigma_\mathrm{x}\Big\Vert_{\mathrm{H}^{-\frac{1}{2}}_0(\partial\Omega)}
\\ &\lesssim \nonumber \Vert 1 \Vert_{\mathrm{L}^2(\partial\Omega)}  \Big|\int_{\partial\Omega}\partial_\mathrm{t}^2\partial_\nu u(\mathrm{y},\mathrm{t}) \frac{(\mathrm{x}-\mathrm{y})\cdot\nu_\mathrm{x}}{|\mathrm{x}-\mathrm{y}|}d\sigma_\mathrm{y}\Big| + \frac{1}{|\partial\Omega|} \Vert 1 \Vert_{\mathrm{L}^2(\partial\Omega)}\Big|\int_{\partial\Omega}\int_{\partial\Omega}\partial_\mathrm{t}^2\partial_\nu u \frac{(\mathrm{x}-\mathrm{y})\cdot\nu_\mathrm{x}}{|\mathrm{x}-\mathrm{y}|}d\sigma_\mathrm{y}d\sigma_\mathrm{x}\Big|
\\ &\lesssim \nonumber \delta \Vert \partial_\mathrm{t}^2\partial_\nu u(\cdot,\mathrm{t}) \Vert_{\mathrm{H}^{-\frac{1}{2}}_0(\partial\Omega)} \Big\Vert \frac{(\mathrm{x}-\cdot)\cdot \nu_\mathrm{x}}{|\mathrm{x}-\cdot|} \Big\Vert_{\mathrm{H}^{\frac{1}{2}}_0(\partial\Omega)} + \delta^{-1} \Vert 1 \Vert_{\mathrm{H}^{\frac{1}{2}}_0(\partial\Omega)} \Big\Vert \int_{\partial\Omega}\partial_\mathrm{t}^2\partial_\nu u \frac{(\mathrm{x}-\mathrm{y})\cdot\nu_\mathrm{x}}{|\mathrm{x}-\mathrm{y}|}d\sigma_\mathrm{y}\Big\Vert_{\mathrm{H}^{-\frac{1}{2}}_0(\partial\Omega)}
\\ &\sim \delta^{\textcolor{black}{3}}.
\end{align}
As, $\displaystyle\mathrm{A}_{\partial\Omega} = \frac{1}{|\partial\Omega|}\int_{\partial\Omega}\int_{\partial\Omega}\frac{(\mathrm{x}-\mathrm{y})\cdot\nu_\mathrm{x}}{|\mathrm{x}-\mathrm{y}|} d\sigma_\mathrm{x}d\sigma_\mathrm{y} \sim \delta^2
$, we deduce the following estimate
\begin{align}\label{s5}
\bullet\ \text{Estimation of}\ \mathrm{S}_5 &:=\nonumber \Vert\bm{\mathrm{T}}^*\Vert_{\mathcal{L}(\mathrm{H}^{-\frac{1}{2}}(\partial\Omega))} \frac{1}{|\partial\Omega|}\Big\Vert\int_{\partial\Omega}\partial_\mathrm{t}^2\partial_\nu u(\cdot,\mathrm{t}) \mathrm{A}_{\partial\Omega}\Big\Vert_{\mathrm{H}_0^{-\frac{1}{2}}(\partial\Omega)} 
\\ &\lesssim \nonumber \Vert\bm{\mathrm{T}}^*\Vert_{\mathcal{L}(\mathrm{H}^{-\frac{1}{2}}(\partial\Omega))}\frac{1}{|\partial\Omega|} \Vert 1 \Vert_{\mathrm{L}^2(\partial\Omega)} \Big|\int_{\partial\Omega}\partial_\mathrm{t}^2\partial_\nu u(\cdot,\mathrm{t}) \frac{1}{|\partial\Omega|}\int_{\partial\Omega}\int_{\partial\Omega}\frac{(\mathrm{x}-\mathrm{y})\cdot\nu_\mathrm{x}}{|\mathrm{x}-\mathrm{y}|} d\sigma_\mathrm{x}\sigma_\mathrm{y}\Big|
\\ &\lesssim\delta^{-1} \Big|\int_{\partial\Omega}\partial_\mathrm{t}^2\partial_\nu u(\cdot,\mathrm{t})\Big| \lesssim \delta^{-1}\Vert 1\Vert_{\mathrm{L}^2(\Omega)}\Vert \partial_\mathrm{t}^4 u(\cdot,\mathrm{t}) \Vert_{\mathrm{L}_0^2(\partial\Omega)}\sim \delta^2.
\end{align}
As above, we show that
\begin{align}\label{s4}
    \bullet\ \text{Estimation of}\ \mathrm{S}_4 &:= \frac{1}{|\partial\Omega|}\Big\Vert \int_{\partial\Omega}\partial_\mathrm{t}^2\partial_\nu u(\cdot,\mathrm{t}) \Big(\mathrm{A}(\mathrm{y})-\mathrm{A}_{\partial\Omega}\Big)\Big\Vert_{\mathrm{H}_0^{-\frac{1}{2}}(\partial\Omega)}\Vert\bm{\mathrm{T}}^*\Vert_{\mathcal{L}(\mathrm{H}_0^{-\frac{1}{2}}(\partial\Omega))} \sim \delta^4.
\end{align}

\begin{align}\label{s6}
     \bullet\ &\text{Estimation of}\ \mathrm{S}_6\nonumber \\ &:= \nonumber\Vert \bm{\mathrm{T}}^*\Vert_{\mathcal{L}(\mathrm{H}^{-\frac{1}{2}}_0(\partial\Omega))}\Big\Vert \int_{\partial\Omega}(\partial_\nu u)_{\mathrm{t}\mathrm{t}\mathrm{t}}(\mathrm{x}-\mathrm{y})\cdot\nu_\mathrm{x} d\sigma_\mathrm{y}
    -\frac{1}{|\partial\Omega|}\int_{\partial\Omega} \int_{\partial\Omega}(\partial_\nu u)_{\mathrm{t}\mathrm{t}\mathrm{t}} (\mathrm{x}-\mathrm{y})\cdot\nu_\mathrm{x} d\sigma_\mathrm{y}d\sigma_\mathrm{x}\Big\Vert_{\mathrm{H}^{-\frac{1}{2}}_0(\partial\Omega)}
    \\ \nonumber &\le \Big\Vert \int_{\partial\Omega}(\partial_\nu u)_{\mathrm{t}\mathrm{t}\mathrm{t}}(\mathrm{x}-\mathrm{y})\cdot\nu_\mathrm{x} d\sigma_\mathrm{y}\Big\Vert_{\mathrm{H}^{-\frac{1}{2}}_0(\partial\Omega)} + \Big\Vert\frac{1}{|\partial\Omega|}\int_{\partial\Omega} \int_{\partial\Omega}(\partial_\nu u)_{\mathrm{t}\mathrm{t}\mathrm{t}} (\mathrm{x}-\mathrm{y})\cdot\nu_\mathrm{x} d\sigma_\mathrm{y}d\sigma_\mathrm{x}\Big\Vert_{\mathrm{H}^{-\frac{1}{2}}_0(\partial\Omega)}
    \\ \nonumber &\lesssim \Vert 1 \Vert_{\mathrm{L}^2(\partial\Omega)}\Big| \int_{\partial\Omega}(\mathrm{x}-\mathrm{y})\cdot\nu_\mathrm{x}\partial_\mathrm{t}^3\partial_\nu u(\mathrm{y},\mathrm{t}_\mathrm{m}) d\sigma_\mathrm{y} \Big|
    \\ &\lesssim \delta\Big\Vert \partial_\mathrm{t}^3\partial_\nu u(\cdot,\mathrm{t}^*_\mathrm{m}) \Big\Vert_{\mathrm{H}_0^{-\frac{1}{2}}(\partial\Omega)} \Vert (\mathrm{x}-\cdot)\cdot\nu_\mathrm{x}\Vert_{\mathrm{L}^2(\partial\Omega)} \sim \delta^{\textcolor{black}{\frac{9}{2}}}.
\end{align}
\begin{align}\label{s7}
    \bullet\ \text{Estimation of}\ \mathrm{S}_7 &:= \nonumber\Vert\bm{\mathrm{T}}^*\Vert_{\mathcal{L}(\mathrm{H}^{-\frac{1}{2}}(\partial\Omega))} \frac{1}{|\partial\Omega|}\Big\Vert \int_{\partial\Omega} \int_{\partial\Omega} (\mathrm{x}-\mathrm{y})\cdot\nu_\mathrm{x} \partial_\mathrm{t}^3\partial_\nu u(\mathrm{y},\mathrm{t}_\mathrm{m}) d\sigma_\mathrm{y}d\sigma_\mathrm{x}\Big\Vert_{\mathrm{H}^{-\frac{1}{2}}(\partial\Omega)}
    \\ \nonumber &\lesssim \Vert\bm{\mathrm{T}}^*\Vert_{\mathcal{L}(\mathrm{H}^{-\frac{1}{2}}(\partial\Omega))}\frac{1}{|\partial\Omega|} \Big\Vert\int_{\partial\Omega}\partial_\mathrm{t}^3\partial_\nu u(\mathrm{y},\mathrm{t}_\mathrm{m}) \int_{\partial\Omega}(\mathrm{x}-\mathrm{y})\cdot\nu_\mathrm{x} d\sigma_\mathrm{x}d\sigma_\mathrm{y} \Big\Vert_{\mathrm{H}^{-\frac{1}{2}}(\partial\Omega)}
    \\ \nonumber &\lesssim \delta^{-2} \frac{1}{|\partial\Omega|} |\Omega| \Big\Vert\int_{\partial\Omega}\partial_\mathrm{t}^3\partial_\nu u(\mathrm{y},\mathrm{t}_\mathrm{m}) d\sigma_\mathrm{y} \Big\Vert_{\mathrm{H}^{-\frac{1}{2}}(\partial\Omega)}
    \\ \nonumber &\lesssim \delta^{-1}\Vert 1 \Vert_{\mathrm{L}^2(\partial\Omega)} \Big| \int_{\partial\Omega}\partial_\mathrm{t}^3\partial_\nu u(\mathrm{y},\mathrm{t}_\mathrm{m}) d\sigma_\mathrm{y} \Big|
    \\ &\lesssim \Vert 1 \Vert_{\mathrm{L}^2(\partial\Omega)} \Big\Vert \partial^3_\mathrm{t}\partial_\nu u(\cdot,\mathrm{t}_\mathrm{m})\Vert_{\mathrm{H}^{-\frac{1}{2}}(\partial\Omega)} \sim \delta^2.
\end{align}
Furthermore, we use the divergence theorem to the Newtonian potential $\bm{\mathcal{N}}_{\textbf{Lap},\Omega}[\partial_\mathrm{t}^2u]$ to derive
\begin{align}\label{s8}
 \bullet\ \text{Estimation of}\ \mathrm{S}_8 &:= \nonumber \frac{\gamma}{\alpha}\Vert\bm{\mathrm{T}}^*\Vert_{\mathcal{L}(\mathrm{H}^{-\frac{1}{2}}(\partial\Omega))} \frac{1}{|\partial\Omega|}\Big\Vert \int_{\partial\Omega}\partial_\nu\bm{\mathcal{N}}_{\textbf{Lap},\Omega}[\partial_\mathrm{t}^2u]\Big\Vert_{\mathrm{H}^{-\frac{1}{2}}(\partial\Omega)}
 \\ &\lesssim \nonumber \frac{1}{|\partial\Omega|} \Vert 1 \Vert_{\mathrm{L}^2(\partial\Omega)} \Big|\int_{\partial\Omega}\partial_\nu\bm{\mathcal{N}}_{\textbf{Lap},\Omega}[\partial_\mathrm{t}^2u]\Big|
 \\ &\lesssim \nonumber \delta^{-1} \Big|\int_{\Omega}\Delta\bm{\mathcal{N}}_{\textbf{Lap},\Omega}[\partial_\mathrm{t}^2u]\Big|
 \\ &= \delta^{-1} \Big|\int_{\Omega}\partial_\mathrm{t}^2u(\mathrm{y},\mathrm{t})d\mathrm{y}\Big| \lesssim \delta^{-1}\Vert 1\Vert_{\mathrm{L}^2(\Omega)} \Vert \partial_\mathrm{t}^2u(\cdot,\mathrm{t})\Vert_{\mathrm{L}^2(\Omega)} \sim \delta^2.
\end{align}
Then, we use the continuity of the Newtonian potential, triangle inequality, similar estimate as (\ref{s8}), (\ref{ss9}) to determine
\begin{align}\label{s9}
    \bullet\ \text{Estimation of}\ \mathrm{S}_9 &:= \nonumber \frac{\gamma}{\alpha} \Big\Vert \partial_\nu\bm{\mathcal{N}}_{\textbf{Lap},\Omega}[\partial_\mathrm{t}^2u]- \frac{1}{|\partial\Omega|}\int_{\partial\Omega}\partial_\nu\bm{\mathcal{N}}_{\textbf{Lap},\Omega}[\partial_\mathrm{t}^2u]\Big\Vert_{\mathrm{H}_0^{-\frac{1}{2}}(\partial\Omega)}\Vert\bm{\mathrm{T}}^*\Vert_{\mathcal{L}(\mathrm{H}_0^{-\frac{1}{2}}(\partial\Omega))} 
    \\ &\le \nonumber\frac{\gamma}{\alpha} \Big\Vert\partial_\nu\bm{\mathcal{N}}_{\textbf{Lap},\Omega}[\partial_\mathrm{t}^2u] \Big\Vert_{\mathrm{H}_0^{-\frac{1}{2}}(\partial\Omega)} + \frac{\Vert 1\Vert_{\mathrm{L}^2(\partial\Omega)}}{|\partial\Omega|}\Big|\int_{\partial\Omega}\partial_\nu\bm{\mathcal{N}}_{\textbf{Lap},\Omega}[\partial_\mathrm{t}^2u]\Big|
    \\ &\sim \delta^2.
\end{align}
In addition
\begin{align}
\nonumber
    \Big|\int_{\partial\Omega}\partial_\nu\int_\Omega|\mathrm{x}-\mathrm{y}|\partial_\mathrm{t}^4u(\mathrm{y},\mathrm{t}_3)d\mathrm{y}d\sigma_\mathrm{x}\Big| &= \Big|\int_{\Omega}\Delta\int_\Omega|\mathrm{x}-\mathrm{y}|\partial_\mathrm{t}^4u(\mathrm{y},\mathrm{t}_3)d\mathrm{y}d\mathrm{x}\Big|
    \\&= \nonumber\Big|\int_{\Omega}\int_\Omega\Delta|\mathrm{x}-\mathrm{y}|\partial_\mathrm{t}^4u(\mathrm{y},\mathrm{t}_3)d\mathrm{y}d\mathrm{x}\Big|,
\end{align}
therefore, we derive
\begin{align}\label{n}
    \Big|\int_{\partial\Omega}\partial_\nu\int_\Omega|\mathrm{x}-\mathrm{y}|\partial_\mathrm{t}^4u(\mathrm{y},\mathrm{t}_3)d\mathrm{y}d\sigma_\mathrm{x}\Big| \sim \delta^5.
\end{align}
Consequently, we obtain from (\ref{n}) that  
\begin{align}\label{s10}
    \bullet\ \text{Estimation of}\ \mathrm{S}_{10} &:=  \mathrm{c}^{-2}_0\frac{\gamma}{\alpha} \frac{1}{|\partial\Omega|}\Big\Vert \int_{\partial\Omega}\partial_\nu\int_\Omega|\mathrm{x}-\mathrm{y}|\partial_\mathrm{t}^4u(\mathrm{y},\mathrm{t}_3)d\mathrm{y}\Big\Vert_{\mathrm{H}^{-\frac{1}{2}}(\partial\Omega)}\Vert\bm{\mathrm{T}}^*\Vert_{\mathcal{L}(\mathrm{H}^{-\frac{1}{2}}(\partial\Omega))} \sim \delta^4.
\end{align}
In a similar manner, we show that
\begin{align}\label{s11}
   \bullet\ &\text{Estimation of}\ \mathrm{S}_{11} \\&:= \nonumber \mathrm{c}^{-2}_0\frac{\gamma}{\alpha}\Big\Vert \partial_\nu\int_\Omega|\mathrm{x}-\mathrm{y}|\partial_\mathrm{t}^4u\Big(\mathrm{y},\mathrm{t}^*_3\Big)d\mathrm{y} -\nonumber \frac{1}{|\partial\Omega|} \int_{\partial\Omega}\partial_\nu\int_\Omega|\mathrm{x}-\mathrm{y}|\partial_\mathrm{t}^4u(\mathrm{y},\mathrm{t}^*_3)d\mathrm{y}\Big\Vert_{\mathrm{H}_0^{-\frac{1}{2}}(\partial\Omega)}\Vert\bm{\mathrm{T}}^*\Vert_{\mathcal{L}(\mathrm{H}_0^{-\frac{1}{2}}(\partial\Omega))}
  \sim \delta^6.
\end{align}
Hence, with the help of the estimates (\ref{s1}), (\ref{s2}), (\ref{s3}), (\ref{s4}), (\ref{s5}), (\ref{s6}),(\ref{s7}), (\ref{s8}), (\ref{s9}), (\ref{s10}) and (\ref{s11}) we deduce that
\begin{align}
\Vert \partial_\mathrm{t}^\mathrm{k}\partial_\nu u(\cdot, \mathrm{t})\Vert_{\mathrm{H}^{-\frac{1}{2}}(\partial\Omega)} \sim \delta^2 \quad \text{for}\quad \mathrm{k} =0,1,...
\end{align}
Therefore it completes the proof of Proposition \ref{p3}.


\section{Appendix}\label{appen}          

\subsection{Well-posedness and Regularity of the Problem (\ref{mainfor}): Proof of Lemma \ref{wellpose}} \label{well1}   
\textcolor{black}{
To show the well-posedness of problem (\ref{mainfor}), we use the approach proposed by Bamberger and Ha Duong \cite{hduong} and Sayas in \cite{sayas} based on the Fourier-Laplace transform.
\bigbreak
\noindent
To begin with, we consider the following elliptic problem: 
\begin{align}\label{laplace}
    \mathrm{k}^{-1} \bm{\mathrm{s}}^2 \Tilde{u}^\mathrm{s}(\mathrm{x},\bm{\mathrm{s}}) + \text{div} \rho^{-1} \nabla \Tilde{u}^\mathrm{s}(\mathrm{x},\bm{\mathrm{s}}) = \mathrm{F}^\ell(\mathrm{x},\bm{\mathrm{s}}).
\end{align}
The above equation can be seen as the Laplace transform to the equation $\mathrm{k}^{-1}u_{\mathrm{t}\mathrm{t}}^\mathrm{s}- \text{div}\rho^{-1}\nabla u^\mathrm{s} = \mathrm{F}(\mathrm{x},\mathrm{t})$, with respect to the time variable, where $\bm{\mathrm{s}}= \sigma + i\omega \in \mathbb{C}$ is the transform parameter with $\sigma \in \mathbb{R}, \sigma>\sigma_0>0,$ for some constant $\sigma_0$, and $\omega\in \mathbb{R}.$ We followed the convention that $\mathrm{F}^\ell(\mathrm{x},\cdot)$ is the Laplace transformation of $\mathrm{F}(\mathrm{x},\cdot)$. 
\bigbreak
\noindent
Next, we develop a variational method for the aforementioned problem (\ref{laplace}) and utilize the Lax-Milgram Lemma.
\noindent
By multiplying equation (\ref{laplace}) by the complex conjugate of $\mathrm{v}\in \mathrm{H}^1(\mathbb{R}^3)$, and integrating over $\mathbb R^3$, we obtain a sesquilinear mapping $\mathrm{a}(\Tilde{u}^\mathrm{s},\mathrm{v}): \mathrm{H}^1(\mathbb R^3) \times \mathrm{H}^1(\mathbb R^3) \to \mathbb{C}$ and an antilinear mapping $\mathrm{b}(\mathrm{v}): \mathrm{H}^1(\mathbb R^3) \to \mathbb{C}$, such that
\begin{align}\label{bi-li}
     \mathrm{a}(\Tilde{u}^\mathrm{s},\mathrm{v}) = \mathrm{b}(\mathrm{v}) \quad \text{for all}\; \mathrm{v}\in \mathrm{H}^1(\mathbb R^3),
\end{align}
where
\begin{align}
\nonumber
    \mathrm{a}(\Tilde{u}^\mathrm{s},\mathrm{v}) = \int_{\mathbb{R}^3}\mathrm{k}^{-1} 
    \bm{\mathrm{s}}^2 \Tilde{u}^\mathrm{s}\;\overline{\mathrm{v}} d\mathrm{x} + \int_{\mathbb{R}^3} \rho^{-1}\nabla\Tilde{u}^\mathrm{s}\cdot\overline{\nabla\mathrm{v}} d\mathrm{x},
\end{align}
and
\begin{align}
\nonumber
    \mathrm{b}(\mathrm{v}) = \big\langle \mathrm{F}^\ell(\cdot,\bm{\mathrm{s}}),\mathrm{v}\big\rangle,
\end{align}
where $\big\langle \cdot,\cdot\big\rangle$ denotes the duality pairing between $\mathrm{H}^{1}(\mathbb R^3)$ and $\mathrm{H}^{-1}(\mathbb R^3).$
Now, to verify the coercivity of the above bi-linear form, we choose $\mathrm{v} = \bm{\mathrm{s}}\Tilde{u}^\mathrm{s}$ and use integration by parts to obtain
\begin{align}\label{bi}
    \mathrm{a}(\Tilde{u}^\mathrm{s},\bm{\mathrm{s}}\Tilde{u}^\mathrm{s}) = \int_{\mathbb{R}^3}\mathrm{k}^{-1} \overline{\bm{\mathrm{s}}}|\bm{\mathrm{s}}|^2|\Tilde{u}^\mathrm{s}|^2 d\mathrm{x} + \int_{\mathbb{R}^3} \rho^{-1}\overline{\bm{\mathrm{s}}}|\nabla\Tilde{u}^\mathrm{s}|^2 d\mathrm{x}.
\end{align}
Consequetly, $\Tilde{u}^\mathrm{s}(\cdot,\mathrm{s})$ satisfies
\begin{align}
 \overline{\bm{\mathrm{s}}}|\bm{\mathrm{s}}|^2 \|\mathrm{k}^{-1}\Tilde{u}^\mathrm{s}(\cdot,\bm{\mathrm{s}})\|^2_{\mathrm{L}^2(\mathbb R^3)} + \overline{\bm{\mathrm{s}}} \left\|\rho^{-1} \nabla\Tilde{u}^\mathrm{s}(\cdot,\bm{\mathrm{s}})\right\|^2_{\mathrm{L}^2(\mathbb R^3)} = \overline{\bm{\mathrm{s}}}  \big\langle\mathrm{F}^\ell(\cdot,\bm{\mathrm{s}}), \Tilde{u}^\mathrm{s}(\cdot,\bm{\mathrm{s}}) \big\rangle.
\end{align}
Then, after taking the real part of the above expression, we have $\Re\big(\mathrm{a}(\Tilde{u}^\mathrm{s},\bm{\mathrm{s}}\Tilde{u}^\mathrm{s})\big)\ge 0$ and consequently, we deduce
\begin{align}\label{ses}
    | \mathrm{a}(\Tilde{u}^\mathrm{s},\bm{\mathrm{s}}\Tilde{u}^\mathrm{s})| \ge \mathrm{C}(\sigma_0) \Vert \Tilde{u}^\mathrm{s}(\cdot,\bm{\mathrm{s}})\Vert^2_{\mathrm{H}^{1}(\mathbb R^3)},
\end{align}
where $\mathrm{C}(\sigma_0)$ is a positive constant.
\newline
Next, we use duality between the function spaces $\mathrm{H}^1(\mathbb R^3)$ and $\mathrm{H}^{-1}(\mathbb R^3)$ to obtain
\begin{align}\label{anti}
    |\mathrm{b}(\bm{\mathrm{s}}\Tilde{u}^\mathrm{s})| \le |\bm{\mathrm{s}}| \Vert \mathrm{F}^\ell(\cdot,\bm{\mathrm{s}}) \Vert_{\mathrm{H}^{-1}(\mathbb R^3)}\Vert \Tilde{u}^\mathrm{s}(\cdot,\bm{\mathrm{s}})\Vert_{\mathrm{H}^{1}(\mathbb R^3)}.
\end{align}
We deduce by combining (\ref{ses}) and (\ref{anti}) that (\ref{bi-li}) has one and only one solution and it satisfies
\begin{align}\label{4.6}
    \Vert \Tilde{u}^\mathrm{s}(\cdot,\bm{\mathrm{s}})\Vert_{\mathrm{H}^{1}(\mathbb R^3)} \le \frac{|\bm{\mathrm{s}}|}{\mathrm{C}(\sigma_0)}\Vert \mathrm{F}^\ell(\cdot,\bm{\mathrm{s}}) \Vert_{\mathrm{H}^{-1}(\mathbb R^3)}. 
\end{align}
Let us now define the inverse Laplace transform of $\Tilde{u}^\mathrm{s}(\mathrm{x},\cdot)$ for $\Re(\bm{\mathrm{s}}) =\sigma>0$ as
\begin{align}\label{invlap}
    u^\mathrm{s}(\mathrm{x},\mathrm{t}):= \frac{1}{2\pi i}\int_{\sigma-i\infty}^{\sigma+i\infty}e^{\bm{\mathrm{s}}\mathrm{t}}\Tilde{u}^\mathrm{s}(\mathrm{x},\bm{\mathrm{s}})d\bm{\mathrm{s}} = \frac{1}{2\pi}\int_{-\infty}^{\infty}e^{(\sigma+i\omega)\mathrm{t}}\Tilde{u}^\mathrm{s}(\mathrm{x},\sigma+i\omega)d\omega.
\end{align}
Due to the estimate with respect to $`\bm{\mathrm{s}}`$ in (\ref{4.6}), $u^\mathrm{s}(\mathrm{x},\mathrm{t})$ is well-defined. In addition, one can show that $u^\mathrm{s}(\mathrm{x},\mathrm{t})$ does not depend on $\sigma$ by utilizing a classical method of contour integration, see \cite[pp. 39]{sayas}.
If we consider the Fourier transform w.r.t time variable $\Fourier_\mathrm{t}$, then we have $\Fourier_{\mathrm{t}\to \omega}\big(e^{-\sigma \mathrm{t}} \partial^\mathrm{k}_\mathrm{t}u^\mathrm{s}(\mathrm{x},\mathrm{t})\big)=\bm{\mathrm{s}}^\mathrm{k}\Tilde{u}^\mathrm{s}(\mathrm{x},\bm{\mathrm{s}})$ with $\bm{\mathrm{s}}=\sigma+i\omega.$ Thus, we get for $\mathrm{r}\in \mathbb N$ the following
\begin{align}
\nonumber
  \|u^\mathrm{s}\|^2_{\mathrm{H}^{\mathrm{r}}_{0,\sigma}\big((0,\mathrm{T}); \mathrm{H}^1(\mathbb R^3)\big)} & = \int_0^{\mathrm{T}} e^{-2\sigma \mathrm{t}} \sum_{\mathrm{k}=0}^{\mathrm{r}} \mathrm{T}^{2\mathrm{k}} \|\partial^\mathrm{k}_\mathrm{t} u^\mathrm{s}(\cdot,\mathrm{t})\|^2_{\mathrm{H}^1(\mathbb R^3)} \,d\mathrm{t} \\
  & \nonumber \lesssim \int_{\mathbb{R}_+} \int_{\mathbb{R}^3} e^{-2\sigma \mathrm{t}} \sum_{\mathrm{k}=0}^{\mathrm{r}} \Big[|\partial^\mathrm{k}_\mathrm{t} u^\mathrm{s}(\mathrm{x},\mathrm{t})|^2 + |\partial^\mathrm{k}_\mathrm{t} \nabla u^\mathrm{s}(\mathrm{x},\mathrm{t})|^2\Big] \, d\mathrm{x}\, d\mathrm{t} \\
  & \nonumber \lesssim \int_{\mathbb{R}^3} \int_{\mathbb{R}} \sum_{\mathrm{k}=0}^{\mathrm{r}} \Big[\big|\Fourier ( e^{-\sigma \mathrm{t}} \partial^\mathrm{k}_\mathrm{t} u^\mathrm{s}(\mathrm{x},\mathrm{t})\big|^2+ \big|\Fourier ( e^{-\sigma \mathrm{t}} \partial^\mathrm{k}_\mathrm{t} \nabla u^\mathrm{s}(\mathrm{x},\mathrm{t})\big|^2\Big] \, d\mathrm{t}\, d\mathrm{x} \\
  & \nonumber\lesssim \sum_{\mathrm{k}=0}^{\mathrm{r}} \int_{\sigma+i\mathbb{R}} |\bm{\mathrm{s}}|^{2\mathrm{k}} \|\Tilde{u}^\mathrm{s}(\cdot,\bm{\mathrm{s}})\|^2_{\mathrm{H}^1(\mathbb R^3)} \, d\bm{\mathrm{s}} \\
  & \nonumber \lesssim \sum_{\mathrm{k}=0}^{\mathrm{r}+1} \int_{\sigma+i\mathbb{R}} |\bm{\mathrm{s}}|^{2\mathrm{k}} \|\mathrm{F}^\ell(\cdot,\bm{\mathrm{s}})\|^2_{\mathrm{H}^{-1}(\mathbb R^3)} \, d\bm{\mathrm{s}} \simeq \|\mathrm{F}\|^2_{\mathrm{H}^{\mathrm{r}+1}_{0,\sigma}\big((0,\mathrm{T}); \mathrm{H}^{-1}(\mathbb R^3)\big)}.
\end{align}
We now show that the function $u^\mathrm{s}$, defined in equation (\ref{invlap}), is a weak solution to the problem described in equation (\ref{wellposed}) $\big(\text{or}\ (\ref{mainfor})\big)$. To do so, we consider the following weak formulation of the problem $\mathrm{k}^{-1}u_{\mathrm{t}\mathrm{t}}^\mathrm{s}- \text{div}\rho^{-1}\nabla u^\mathrm{s} = \mathrm{F}(\mathrm{x},\mathrm{t})$:
\begin{align}
    \big\langle \mathrm{k}^{-1}\frac{d^2}{d\mathrm{t}^2}u^\mathrm{s}(\cdot,\mathrm{t}),\mathrm{v}\big\rangle +   \big\langle \rho^{-1}\nabla u^\mathrm{s}(\cdot,\mathrm{t}),\nabla\mathrm{v}\big\rangle =  \big\langle \mathrm{F},\mathrm{v} \big\rangle\quad \text{for a.e.}\; \mathrm{t}\in (0,\mathrm{T})\; \text{and}\ \forall \mathrm{v}\in \mathrm{H}^1(\mathbb R^3).
\end{align}
We see that
\begin{align}
    &\nonumber\big\langle \mathrm{k}^{-1}\frac{d^2}{d\mathrm{t}^2}u^\mathrm{s}(\cdot,\mathrm{t}),\mathrm{v}\big\rangle +   \big\langle \rho^{-1}\nabla u^\mathrm{s}(\cdot,\mathrm{t}),\nabla\mathrm{v}\big\rangle 
    \\ \nonumber&=\nonumber \int_{\mathbb R^3}\mathrm{k}^{-1}\int_{\sigma-i\infty}^{\sigma+i\infty}e^{\mathrm{s}\mathrm{t}}\bm{\mathrm{s}}^2\Tilde{u}^\mathrm{s}(\mathrm{x},\bm{\mathrm{s}})\overline{\mathrm{v}(\mathrm{x})}d\bm{\mathrm{s}}d\mathrm{x}  +  \int_{\mathbb R^3}\rho^{-1}\int_{\sigma-i\infty}^{\sigma+i\infty}e^{\mathrm{s}\mathrm{t}} \nabla\Tilde{u}^\mathrm{s}(\mathrm{x},\bm{\mathrm{s}})\cdot\overline{\nabla\mathrm{v}(\mathrm{x})}d\bm{\mathrm{s}}d\mathrm{x} 
    \\ \nonumber &= \int_{\sigma-i\infty}^{\sigma+i\infty}\int_{\mathbb R^3}e^{\mathrm{s}\mathrm{t}}\Big(\mathrm{k}^{-1}\bm{\mathrm{s}}^2\Tilde{u}^\mathrm{s}(\mathrm{x},\bm{\mathrm{s}})\overline{\mathrm{v}(\mathrm{x})}+\rho^{-1}\nabla\Tilde{u}^\mathrm{s}(\mathrm{x},\bm{\mathrm{s}})\cdot\overline{\nabla\mathrm{v}(\mathrm{x})}\Big)d\mathrm{x}d\bm{\mathrm{s}}
    \\ \nonumber &= \int_{\sigma-i\infty}^{\sigma+i\infty} e^{\mathrm{s}\mathrm{t}}\big\langle \mathrm{F}^\ell(\cdot,\bm{\mathrm{s}}),\mathrm{v}\big\rangle d\bm{\mathrm{s}}\quad \text{by}\; (\ref{bi-li})  \\ \nonumber&= \big\langle \mathrm{F}(\cdot,\mathrm{t}),\mathrm{v}\big\rangle, \quad \text{where}\ \big\langle \cdot,\cdot\big\rangle\; \text{denotes the duality pairing between}\;\mathrm{H}^{1}(\mathbb R^3)\ \text{and}\ \mathrm{H}^{-1}(\mathbb R^3).
\end{align}
The proof is complete.}

\subsection{Proof of Proposition \ref{propo}}      

We provides the estimates of $\mathrm{a}_\mathrm{i}$'s for $\mathrm{i}=1,2,..,5$ stated in Proposition \ref{propo}. We start with the term $\mathrm{a}_1:= \frac{1}{\alpha}\partial_\mathrm{t}^2\partial_\nu u^\mathrm{i}$:
\begin{align}
  \Vert\mathrm{a}_1\Vert_{\mathrm{H}^{-\frac{1}{2}}_0(\partial\Omega)} = \frac{1}{\alpha}\Vert \partial_\mathrm{t}^2\partial_\nu u^\mathrm{i}\Vert_{\mathrm{H}^{-\frac{1}{2}}_0(\partial\Omega)} \lesssim \frac{1}{\alpha}\Vert 1 \Vert_{\mathrm{L}^2(\partial\Omega)} \sim \delta^3.
\end{align}
Next, we consider the term $\displaystyle \mathrm{a}_2 := \int_{\partial\Omega}\partial_\mathrm{t}^4\partial_\nu u(\mathrm{y},\mathrm{t}) \frac{(\mathrm{x}-\mathrm{y})\cdot\nu_\mathrm{x}}{|\mathrm{x}-\mathrm{y}|}d\sigma_\mathrm{y}$. As $ \Big\Vert \frac{(\mathrm{x}-\cdot)\cdot \nu_\mathrm{x}}{|\mathrm{x}-\cdot|} \Big\Vert_{\mathrm{H}^{\frac{1}{2}}_0(\partial\Omega)} \sim \delta^\frac{1}{2}$, see (\ref{es6}), we obtain the following estimate
\begin{align}\label{est7}
\nonumber
    \Big|\int_{\partial\Omega}\partial_\mathrm{t}^4\partial_\nu u(\mathrm{y},\mathrm{t}) \frac{(\mathrm{x}-\mathrm{y})\cdot\nu_\mathrm{x}}{|\mathrm{x}-\mathrm{y}|}d\sigma_\mathrm{y}\Big| &\lesssim \Vert \partial_\mathrm{t}^4\partial_\nu u(\cdot,\mathrm{t}) \Vert_{\mathrm{H}^{-\frac{1}{2}}_0(\partial\Omega)} \Big\Vert \frac{(\mathrm{x}-\cdot)\cdot \nu_\mathrm{x}}{|\mathrm{x}-\cdot|} \Big\Vert_{\mathrm{H}^{\frac{1}{2}}_0(\partial\Omega)}
    \\ &= \delta^{\frac{1}{2}}\Vert \partial_\mathrm{t}^4\partial_\nu u(\cdot,\mathrm{t}) \Vert_{\mathrm{H}^{-\frac{1}{2}}_0(\partial\Omega)}.
\end{align}
Then, with $\Vert \partial_\mathrm{t}^4\partial_\nu u(\cdot,\mathrm{t}) \Vert_{\mathrm{H}^{-\frac{1}{2}}(\partial\Omega)} \sim \delta^2$, we obtain 
\begin{align}\label{est9}
    \Vert\mathrm{a}_2\Vert_{\mathrm{H}^{-\frac{1}{2}}_0(\partial\Omega)} :=\Big\Vert \int_{\partial\Omega} \frac{(\mathrm{x}-\mathrm{y})\cdot\nu_\mathrm{x}}{|\mathrm{x}-\mathrm{y}|}\partial_\mathrm{t}^4\partial_\nu u(\mathrm{y},\mathrm{t}) d\sigma_\mathrm{y}\Big\Vert_{\mathrm{H}^{-\frac{1}{2}}_0(\partial\Omega)} &\nonumber\lesssim \delta^{\frac{1}{2}}\Vert \partial_\mathrm{t}^4\partial_\nu u(\cdot,\mathrm{t}) \Vert_{\mathrm{H}^{-\frac{1}{2}}_0(\partial\Omega)} \Vert 1 \Vert_{\mathrm{L}^2(\partial \Omega)}
    \\ &= \delta^\frac{3}{2}\Vert \partial_\mathrm{t}^4\partial_\nu u(\cdot,\mathrm{t}) \Vert_{\mathrm{H}^{-\frac{1}{2}}_0(\partial\Omega)} \sim \delta^\frac{7}{2}.
\end{align}
Similarly, for the term $\mathrm{a}_3:= \displaystyle\frac{5}{6}\mathrm{c}^{-3}_0 \int_{\partial\Omega}(\mathrm{x}-\mathrm{y})\cdot\nu_\mathrm{x}\partial_\mathrm{t}^5\partial_\nu u(\mathrm{y},\mathrm{t}_\mathrm{m}) d\sigma_\mathrm{y}$, we have with $\Vert \partial_\mathrm{t}^4\partial_\nu u(\cdot,\mathrm{t}) \Vert_{\mathrm{H}^{-\frac{1}{2}}(\partial\Omega)} \sim \delta^2$,
\begin{align}\label{est2}
    \Vert\mathrm{a}_3\Vert_{\mathrm{H}^{-\frac{1}{2}}_0(\partial\Omega)} :=\Big\Vert \frac{5}{6}\mathrm{c}^{-3}_0 \int_{\partial\Omega}(\mathrm{x}-\mathrm{y})\cdot\nu_\mathrm{x}\partial_\mathrm{t}^5\partial_\nu u(\mathrm{y},\mathrm{t}_\mathrm{m}) d\sigma_\mathrm{y}\Big\Vert_{\mathrm{H}^{-\frac{1}{2}}_0(\partial\Omega)} \lesssim \delta^3 \Vert \partial_\mathrm{t}^5\partial_\nu u(\cdot,\mathrm{t}_\mathrm{m}) \Vert_{\mathrm{H}^{-\frac{1}{2}}_0(\partial\Omega)} \sim \delta^5.
\end{align}
We consider the term $\displaystyle \mathrm{a}_4 := \frac{\gamma}{\alpha} \partial_\nu\int_\Omega\frac{1}{4\pi|\mathrm{x}-\mathrm{y}|}\partial_\mathrm{t}^4u(\mathrm{y},\mathrm{t})d\mathrm{y}$. As, $\Delta\bm{\mathcal{N}}_{\textbf{Lap},\Omega}\Big[\partial_\mathrm{t}^4u\Big] = -\partial_\mathrm{t}^4u$, then \\ $\Big \Vert \partial_\nu \bm{\mathcal{N}}_{\textbf{Lap},\Omega}\Big[\partial_\mathrm{t}^4u\Big]\Big\Vert_{\mathrm{L}^2(\partial\Omega)} = \delta^2 \Big\Vert \partial_\nu \bm{\mathcal{N}}_{\textbf{Lap},\mathrm{B}}\Big[\partial_\mathrm{t}^4\hat{u}\Big]\Big\Vert_{\mathrm{L}^2(\partial\mathrm{B})}.$

\noindent
Thereafter, we use continuity of that $\bm{\mathcal{N}}_{\textbf{Lap},\mathrm{B}}: \mathrm{L}^2(\mathrm{B})\to \mathrm{H}^2(\mathrm{B})$ and continuous embedding $\mathrm{L}^2(\partial\mathrm{B})\hookrightarrow \mathrm{H}_0^{-\frac{1}{2}}(\partial\mathrm{B})$
to deduce the following
\begin{align}\label{ss9}
    \Big\Vert \partial_\nu \bm{\mathcal{N}}_{\textbf{Lap},\Omega}\Big[\partial_\mathrm{t}^4u\Big]\Big\Vert_{\mathrm{H}_0^{-\frac{1}{2}}(\partial\Omega)} &\le\nonumber \delta \Big\Vert \widehat{\partial_\nu \bm{\mathcal{N}}_{\textbf{Lap},\Omega}\Big[\partial_\mathrm{t}^4u\Big]}\Big\Vert_{\mathrm{H}_0^{-\frac{1}{2}}(\partial\mathrm{\mathrm{B}})}
    \\ &\le \nonumber \delta \Big\Vert \widehat{\partial_\nu \bm{\mathcal{N}}_{\textbf{Lap},\Omega}\Big[\partial_\mathrm{t}^4u\Big]}\Big\Vert_{\mathrm{L}^2(\partial\mathrm{\mathrm{B}})}
    \\ &\le \nonumber \delta^2 \Big\Vert \partial_\nu \bm{\mathcal{N}}_{\textbf{Lap},\mathrm{B}}\Big[\partial_\mathrm{t}^4\hat{u}\Big]\Big\Vert_{\mathrm{L}^2(\partial\mathrm{\mathrm{B}})}
    \\ &\lesssim \delta^2 \Vert \partial_\mathrm{t}^4\hat{u}\Vert_{\mathrm{L}^2(\mathrm{\mathrm{B}})} \sim \delta^2.
\end{align}
Therefore, we obtain
\begin{align}\label{est45}
 \Vert\mathrm{a}_4\Vert_{\mathrm{H}^{-\frac{1}{2}}_0(\partial\Omega)} :=  \Big\Vert\frac{\gamma}{\alpha} \partial_\nu\int_\Omega\frac{1}{4\pi|\mathrm{x}-\mathrm{y}|}\partial_\mathrm{t}^4u(\mathrm{y},\mathrm{t})d\mathrm{y}\Big\Vert_{\mathrm{H}_0^{-\frac{1}{2}}(\partial\Omega)} \sim \delta^4.
\end{align}
Finally, we consider $\displaystyle \mathrm{a}_5:= \mathrm{c}^{-2}_0\frac{\gamma}{\alpha} \partial_\nu\int_\Omega|\mathrm{x}-\mathrm{y}|\partial_\mathrm{t}^6u(\mathrm{y},\mathrm{t}_3)d\mathrm{y}.$ This term can be estimated as
\begin{align}\label{est451}
    \Vert\mathrm{a}_5\Vert_{\mathrm{H}^{-\frac{1}{2}}_0(\partial\Omega)} &:= \nonumber \mathrm{c}^{-2}_0\frac{\gamma}{\alpha} \Vert 1\Vert_{\mathrm{L}^2(\Omega)}\Big|\int_\Omega\frac{(\mathrm{x}-\mathrm{y})\cdot\nu_\mathrm{x}}{|\mathrm{x}-\mathrm{y}|}\partial_\mathrm{t}^6u(\mathrm{y},\mathrm{t}_3)d\mathrm{y}\Big|
    \\ &\lesssim \mathrm{c}^{-2}_0\frac{\gamma}{\alpha} \delta \Big\Vert \frac{(\mathrm{x}-\cdot)\cdot\nu_\mathrm{x}}{|\mathrm{x}-\cdot|}\Big\Vert_{\mathrm{L}^2(\Omega)} \Vert\partial_\mathrm{t}^6u(\cdot,\mathrm{t}_3) \Vert_{\mathrm{L}^2(\Omega)} \sim \delta^6.
\end{align}

\subsection{Proof of Lemma \ref{lemma}}          

We start with the expression derived in (\ref{formula})
\begin{align}
    u^\mathrm{s}(\mathrm{x},\mathrm{t})
    = \frac{\alpha \rho_\mathrm{m}\mathrm{p}^{-\frac{1}{2}}}{4\pi} \textcolor{black}{\frac{\rho_\mathrm{c}}{\mathrm{k}_\mathrm{c}}\mathrm{c}_0^2}\frac{1}{|\partial\Omega|}\int_{\partial\Omega}\frac{1}{|\mathrm{x}-\mathrm{y}|}d\sigma_\mathrm{y}\int_0^{\mathrm{t}-\mathrm{c}_0^{-1}|\mathrm{x}-\mathrm{z}|} \sin\Big(\mathrm{p}^{-\frac{1}{2}}(\mathrm{t}-\mathrm{c}_0^{-1}|\mathrm{x}-\mathrm{z}|-\tau)\Big)\int_{\partial\Omega} \partial_\nu u^\mathrm{i} d\tau  + \mathcal{O}(\delta^{2-\mathrm{q}}).
\end{align}
where we recall the definitions of $\mathrm{p} := \textcolor{black}{ \frac{\alpha\rho_\mathrm{m}}{2}\frac{\rho_\mathrm{c}}{\mathrm{k}_\mathrm{c}}\mathrm{A}_{\partial\Omega}}$, $\alpha:=\frac{1}{\rho_\mathrm{c}}-\frac{1}{\rho_\mathrm{m}}$, and \textcolor{black}{$\displaystyle
    \mathrm{A}_{\partial\Omega} := \frac{1}{|\partial \Omega|}\int_{\partial\Omega}\int_{\partial\Omega}\frac{(\mathrm{x}-\mathrm{y})\cdot\nu}{|\mathrm{x}-\mathrm{y}|}d\sigma_\mathrm{x}d\sigma_\mathrm{y}= \delta^2 \mathrm{A}_{\partial\mathrm{B}}
$}.
\newline

\noindent
\textcolor{black}{Then,  due to the scaling property (\ref{cond-bubble}), we see that}
\begin{align}
    \alpha = \rho_\mathrm{c}^{-1} + \mathcal{O}(1)\quad \text{and}\quad \beta = \mathrm{k}_\mathrm{c}^{-1} + \mathcal{O}\big(1\big).
\end{align}
Consequently, we perform the following calculations.
\bigbreak
\noindent
\textcolor{black}{Let us consider the term $\frac{\alpha \rho_\mathrm{m}\mathrm{p}^{-\frac{1}{2}}}{4\pi} \frac{\rho_\mathrm{c}}{\mathrm{k}_\mathrm{c}}\mathrm{c}_0^2$, which can be rewritten as follows
\begin{align}
    \mathrm{D}:= &\nonumber\frac{\alpha\rho_\mathrm{m}}{4\pi}\frac{\rho_\mathrm{c}}{\mathrm{k}_\mathrm{c}}\mathrm{c}_0^2 \frac{\mathrm{p}^\frac{1}{2}}{\mathrm{p}}
    \\ \nonumber&= \frac{\cancel{\alpha\rho_\mathrm{m}}}{4\pi}\frac{\cancel{\rho_\mathrm{c}}}{\cancel{\mathrm{k}_\mathrm{c}}}\mathrm{c}_0^2\frac{2\cancel{\mathrm{k}_\mathrm{c}}}{\mathrm{A}_{\partial\Omega}\cancel{\alpha\rho_\mathrm{m}\rho_\mathrm{c}}}\mathrm{p}^\frac{1}{2}
    \\ &= \frac{\mathrm{c}_0^2}{2\pi\mathrm{A}_{\partial\Omega}}\mathrm{p}^\frac{1}{2}.
\end{align}
Then, as we have $\mathrm{p}^\frac{1}{2}= \sqrt{\frac{\rho_\mathrm{m}}{2}\frac{\mathrm{A}_{\partial\mathrm{B}}}{\overline{\mathrm{k}_\mathrm{c}}}}$, we deduce that
\begin{align}\label{cons}
    \mathrm{D} = \frac{\mathrm{c}_0^2}{4\pi} \frac{1}{\delta^2} \frac{\rho_\mathrm{m}}{\overline{\mathrm{k}_\mathrm{c}}}\sqrt{\frac{2\overline{\mathrm{k}_\mathrm{c}}}{\mathrm{A}_{\partial\mathrm{B}}\rho_\mathrm{m}}}.
\end{align}
}
\noindent
Hence, using (\ref{cons}), we obtain
\begin{align}
   u^\mathrm{s}(\mathrm{x},\mathrm{t})\nonumber
  = \nonumber \textcolor{black}{\frac{\mathrm{c}_0^2}{4\pi} \frac{1}{\delta^2} \frac{\rho_\mathrm{m}}{\overline{\mathrm{k}_\mathrm{c}}}\sqrt{\frac{2\overline{\mathrm{k}_\mathrm{c}}}{\mathrm{A}_{\partial\mathrm{B}}\rho_\mathrm{m}}}}\frac{1}{|\partial\Omega|}\int_{\partial\Omega}\frac{1}{|\mathrm{x}-\mathrm{y}|}d\sigma_\mathrm{y}\int_0^{\mathrm{t}-\mathrm{c}_0^{-1}|\mathrm{x}-\mathrm{z}|} &\nonumber\sin\big(\mathrm{p}^{-\frac{1}{2}}(\mathrm{t}-\mathrm{c}_0^{-1}|\mathrm{x}-\mathrm{z}|-\tau)\big)\Big(\int_{\partial\Omega} \partial_\nu u^\mathrm{i}(\mathrm{y},\mathrm{t})d\sigma_\mathrm{y}\Big) d\tau 
    \\ &+\nonumber \mathcal{O}(\delta^{2-\mathrm{q}}).
\end{align}
Let us denote $\omega_\mathrm{M} := \sqrt{
\frac{2\overline{\mathrm{k}_\mathrm{c}}}{\mathrm{A}_{\partial\mathrm{B}}\rho_\mathrm{m}}}.$ Thereafter, we arrive at
\begin{align}\label{final}
    u^\mathrm{s}(\mathrm{x},\mathrm{t})
    =  \textcolor{black}{\frac{\mathrm{c}_0^2}{4\pi} \frac{1}{\delta^2} \frac{\rho_\mathrm{m}}{\overline{\mathrm{k}_\mathrm{c}}}\omega_\mathrm{M}}\frac{1}{|\partial\Omega|}\int_{\partial\Omega}\frac{1}{|\mathrm{x}-\mathrm{y}|}d\sigma_\mathrm{y}\int_0^{\mathrm{t}-\mathrm{c}_0^{-1}|\mathrm{x}-\mathrm{z}|} &\nonumber\sin\big(\mathrm{p}^{-\frac{1}{2}}(\mathrm{t}-\mathrm{c}_0^{-1}|\mathrm{x}-\mathrm{z}|-\tau)\big)\Big(\int_{\partial\Omega} \partial_\nu u^\mathrm{i}(\mathrm{y},\mathrm{t})d\sigma_\mathrm{y}\Big) d\tau 
    \\ &+ \mathcal{O}(\delta^{2-\mathrm{q}}).
\end{align}
Using Taylor's series expansion and integration by parts, we derive the following estimate
\begin{align}\label{toy}
\int_{\partial\Omega} \partial_\nu u^\mathrm{i}(\mathrm{y},\mathrm{\tau})d\sigma_\mathrm{y} = \int_{\Omega} \Delta u^\mathrm{i}(\mathrm{y},\mathrm{\tau})d\mathrm{y}  
= \frac{\rho_\mathrm{m}}{\mathrm{k}_\mathrm{m}}\int_{\Omega} u_{\mathrm{t}\mathrm{t}}^\mathrm{i}(\mathrm{y},\mathrm{\tau})d\mathrm{y}
 = \frac{\rho_\mathrm{m}}{\mathrm{k}_\mathrm{m}}|\Omega| u_{\mathrm{t}\mathrm{t}}^\mathrm{i}(\mathrm{z},\mathrm{\tau}) + \mathcal{O}(\delta^4).
\end{align}
After inserting the estimate (\ref{toy}) in (\ref{final}) and using the fact that \textcolor{black}{$\mathrm{c}_0^{-2}=\frac{\rho_\mathrm{m}}{\mathrm{k}_\mathrm{m}} $}, we obtain
\begin{align}\label{final}
    u^\mathrm{s}(\mathrm{x},\mathrm{t})
    = \nonumber\frac{\omega_\mathrm{M}\rho_\mathrm{m}|\mathrm{B}| }{4\pi\overline{\mathrm{k}_\mathrm{c}}} \delta  \frac{1}{|\partial\Omega|}\int_{\partial\Omega}\frac{1}{|\mathrm{x}-\mathrm{y}|}d\sigma_\mathrm{y}\int_0^{\mathrm{t}-\mathrm{c}_0^{-1}|\mathrm{x}-\mathrm{z}|} \sin\big(\omega_\mathrm{M}(\mathrm{t}-\mathrm{c}_0^{-1}|\mathrm{x}-\mathrm{z}|-\tau)\big)u_{\mathrm{t}\mathrm{t}}^\mathrm{i}(\mathrm{z},\mathrm{\tau}) d\tau + \mathcal{O}(\delta^{2-\mathrm{q}}).
\end{align}
This completes the Proof.
\bigbreak

\noindent
\textcolor{black}{\textbf{Acknowledgement:} We would like to thank the referees for their careful reading and valuable suggestions, which made our manuscript much improved.}

\end{document}